\documentclass[reqno,12pt]{amsart}
\usepackage{amsmath,amssymb,epsfig,color,slashbox,url}

\textwidth 15.6cm
\textheight 22.4cm
\hoffset-1.7cm \voffset-.5cm

\numberwithin{equation}{section}

\newtheorem{theorem}{Theorem}[section]
\newtheorem{conjecture}[theorem]{Conjecture}
\newtheorem{lemma}[theorem]{Lemma}
\newtheorem{proposition}[theorem]{Proposition}
\newtheorem{definition}[theorem]{Definition}
\newtheorem{corollary}[theorem]{Corollary}

\newtheorem{example}[theorem]{Example}
\newtheorem{remark}[theorem]{Remark}

\DeclareMathOperator{\link}{link}
\newcommand{\0}{\widehat{0}}
\newcommand{\1}{\widehat{1}}
\newcommand{\2}{\widehat{2}}
\newcommand{\nm}{\widehat{-1}}
\newcommand{\grI}{\widehat{I}}
\newcommand{\II}{I_2}
\newcommand{\grII}{\widehat{I_2}}
\newcommand{\iab}{\iota}

\newcommand{\Iab}{{\mathcal I}}
\newcommand{\IIab}{{\mathcal I}_2}

\newcommand{\lift}{{\mathcal L}}
\newcommand{\Sym}[1]{S_{\mathbb Q}\langle a,b\rangle_{#1}}
\newcommand{\Asym}[1]{A_{\mathbb Q}\langle a,b\rangle_{#1}}

\begin{document}

\title[The poset of intervals]{The type $B$ permutohedron and the
poset of intervals as a Tchebyshev transform}

\author[G\'abor Hetyei]{G\'abor Hetyei}

\address{Department of Mathematics and Statistics,
  UNC-Charlotte, Charlotte NC 28223-0001.
WWW: \tt http://webpages.uncc.edu/ghetyei/.}

\subjclass [2010]{Primary 06A07; Secondary 05A15, 05E45, 52B05}

\keywords{permutohedron, type $B$, Tchebyshev triangulation, $cd$-index}

\date{\today}

\begin{abstract}
We show that the order complex of intervals of a poset, ordered by
inclusion, is a Tchebyshev triangulation of the order complex of the
original poset. Besides studying the properties of this transformation,
we show that the dual of the type $B$ permutohedron is combinatorially
equivalent to the suspension of the order complex of the poset of
intervals of a Boolean algebra (with the minimum and maximum elements
removed).    
\end{abstract}
\maketitle

\section*{Introduction}

Inspired by Postnikov's seminal work~\cite{Postnikov}, we have seen a
surge in the study of root polytopes in recent years. A basic object in
these investigations is the permutohedron. This paper connects
permutohedra with a variant of the 
{\em Tchebyshev transform} of a poset, introduced by the present
author~\cite{Hetyei-tch,Hetyei-mfp} and studied by Ehrenborg and
Readdy~\cite{Ehrenborg-Readdy-Tch}, and with the (generalized)
{\em Tchebyshev triangulations} of a simplicial complex, first introduced by
the present author in~\cite{Hetyei-tt} and studied in collaboration with
Nevo in~\cite{Hetyei-Nevo}. The key idea of a Tchebyshev triangulation
may be summarized as follows: we add the midpoint to each edge of a
simplicial complex, and perform a sequence of stellar subdivisions,
until we obtain a triangulation containing all the newly added
vertices. Regardless of the order chosen, the face numbers of the
triangulation will be the same, and may be obtained from the face
numbers $f_j$ of the original complex by replacing the powers of $x$ with
Tchebyshev polynomials of the first kind if we work with the appropriate
generating function. The appropriate generating function in this setting
is the polynomial $F(x)=\sum_j f_{j-1} ((x-1)/2)^j$. It is also known that
the links of the original vertices in a Tchebyshev triangulation from a
multiset of simplicial complexes, called a {\em Tchebyshev triangulation
  of the second kind}, whose face numbers are also the same for all
Tchebyshev triangulations, and may be computed by replacing the powers
of $x$ with Tchebyshev polynomials of the second kind in $F(x)$.

The formula connecting the face numbers of the
type $A$ and type $B$ permutohedra is identical to computing the face
numbers of a Tchebyshev triangulation. These permutohedra are
simple polytopes, their duals are simplicial polytopes, their
boundary complexes are called the type $A$ resp.\ type $B$
  Coxeter complexes. The suspicion
arises that the type $B$ Coxeter complex is a Tchebyshev
triangulation of the type $A$ Coxeter complex. 

The present work contains the verification of this conjecture. The type
$A$ Coxeter complex is known to be the order complex of the Boolean
algebra, and the type $B$ Coxeter complex turns out to be the suspension
of an order complex, namely of the partially ordered set of intervals
of the Boolean algebra, ordered by inclusion.  We show that the operation of
associating the poset of intervals to a partially ordered sets 
always induces a Tchebyshev triangulation at the level of order
complexes. This observation may be helpful in constructing ``type $B$
analogues'' of other polytopes and partially ordered sets. Furthermore
it inspires further study of the poset of intervals of a poset,
initiated by Walker~\cite{Walker}, and continued by
Athanasiadis~\cite{Athanasiadis}, Athanasiadis and
Savvidou~\cite{Athanasiadis-Savvidou} and
Joji\'{c}~\cite{Jojic} among others. 

This paper is structured as follows. After the Preliminaries, we
introduce the poset of intervals in Section~\ref{sec:itcheb} and show
that the order complex of the poset of intervals is always a Tchebyshev
triangulation of the order complex of the original poset. We also introduce
a graded variant of this operation that takes a graded poset into a
graded poset. In Section~\ref{sec:typeb} we show that the type $B$
Coxeter complex is the order complex of the graded poset of intervals of the
Boolean algebra. In Section~\ref{sec:flagf} we review how to compute the
flag $f$-vector of graded a poset of intervals. This topic was first
studied by Joji\'{c}~\cite{Jojic}, and we provide new proofs to some of
his key formulas. Section~\ref{sec:t2} introduces interval transforms of
the second kind. The corresponding multiset of order complexes is the
Tchebyshev triangulation of the second kind corresponding to the
Tchebyshev triangulation induced by taking the order complex of the
graded poset of intervals of a graded poset. We find explicit flag
$f$-vector formulas in terms of the mixing operator introduced by  
Ehrenborg and Readdy~\cite{Ehrenborg-Readdy-cop}. Inspired by the work
of Ehrenborg and Readdy~\cite{Ehrenborg-Readdy-Tch}, we make the first
steps towards describing all eigenvectors of the linear operator on the
flag $f$-vectors, induced by taking the interval transforms of the
second kind. In Section~\ref{sec:Eulerian} we consider the special case
of Eulerian posets, cite a formula by Joji\'{c}~\cite{Jojic} and an
analogous recurrence found by Ehrenborg and Fox~\cite{Ehrenborg-Fox} for
the mixing operator, which may be used to compute the effect on the
$cd$-index of taking the interval transform of the second kind. 
The latter result is used in Section~\ref{sec:special} to compute the
$cd$-index of the interval transform of the second kind of the ladder
poset (the same calculation was already performed by
Joji\'{c}~\cite{Jojic} for the interval transform of the first kind of
these posets). As part of the proof of our formula, we develop a
weighted lattice path enumeration model to express the values
$M(c^i,c^j)$ for the mixing operator of Ehrenborg and
Readdy~\cite{Ehrenborg-Readdy-cop}. The other special example considered
in this section is the Boolean lattice, where known results of
Purtill~\cite{Purtill}, Hetyei~\cite{Hetyei-andre} and of Ehrenborg and
Readdy~\cite{Ehrenborg-Readdy-rcubical} come into play. These results
use Andr\'e permutations, first studied by Foata, Strehl and 
Sch\"utzenberger~\cite{Foata-Schutzenberger,Foata-Strehl}, and their
signed generalizations. 

\section{Preliminaries}

\subsection{Graded Eulerian posets}

A partially ordered set is
{\em graded} if it contains a unique minimum element $\0$, a unique
maximum element $\1$ and a rank function $\rho$ satisfying $\rho(\0)=0$
and $\rho(y)=\rho(x)+1$ for each $x$ and $y$ such that $y$ covers $x$.
The number of chains containing elements of  fixed sets of ranks in 
a graded poset $P$ of rank $n+1$  is encoded by the {\em flag
  $f$-vector} $(f_S(P)\::\: S\subseteq \{1,\ldots,n\})$. The entry $f_S$
in the flag $f$-vector  is the number of chains
$x_1<x_2<\cdots<x_{|S|}$ such that their set of ranks $\{\rho(x_i)\::\:
i\in \{1,\ldots,|S|\}\}$ is $S$. Inspired by Stanley~\cite{Stanley-flag} we introduce
the {\em upsilon invariant} of a graded poset $P$ of rank $n+1$ by 
$$
\Upsilon_P(a,b)=\sum_{S\subseteq \{1,\ldots,n\}} f_S u_S
$$
where $u_S=u_1\cdots u_n$ is a monomial in noncommuting variables $a$
and $b$ such that $u_i=b$ for all $i\in S$ and $u_i=a$ for all $i\not\in
S$. It should be noted that the term upsilon invariant is {\em not used
elsewhere} in the literature, most sources switch to the {\em $ab$-index
  $\Psi_P(a,b)$} defined as $\Upsilon_P(a-b,b)$. The
$ab$-index may be also written as a linear combination of monomials in
$a$ and $b$, the coefficients of these monomials form the {\em flag $h$-vector}.
A graded poset $P$ is {\em Eulerian} if every nontrivial interval of $P$
has the same number of elements of even rank as of odd rank. All linear
relations satisfied by the flag $f$-vectors of Eulerian posets were
found by Bayer and Billera~\cite{Bayer-Billera}. A very useful and
compact rephrasing of the Bayer--Billera relations was given by Bayer and
Klapper in~\cite{Bayer-Klapper}: they proved that satisfying the
Bayer--Billera relations is equivalent to stating that the $ab$-index may
be rewritten as a polynomial of $c=a+b$ and $d=ab+ba$. The resulting
polynomial in noncommuting variables $c$ and $d$ is called the {\em
  $cd$-index}.  

As an immediate consequence of the above cited results we obtain the
following.
\begin{corollary}
The $cd$-index of a graded Eulerian poset $P$ may be obtained by
rewriting $\Upsilon_P(a,b)$ as a polynomial of $c=a+2b$ and $d=ab+ba+2b^2$.
\end{corollary}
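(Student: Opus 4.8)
The plan is to reduce the statement to the Bayer--Klapper characterization of the $cd$-index by means of a single change of variables. First I would invert the defining relation of the $ab$-index. Since $\Psi_P(a,b)=\Upsilon_P(a-b,b)$, applying the homomorphism of the free associative algebra $\mathbb{Q}\langle a,b\rangle$ that sends $a\mapsto a+b$ and fixes $b$ gives $\Psi_P(a+b,b)=\Upsilon_P((a+b)-b,b)=\Upsilon_P(a,b)$. Thus $\Upsilon_P(a,b)=\Psi_P(a+b,b)$, expressing the upsilon invariant in terms of the $ab$-index.

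Next I would invoke the cited results. By the theorem of Bayer and Klapper, the Eulerian hypothesis guarantees that $\Psi_P(a,b)$ can be written as a polynomial $\Phi_P$ in the noncommuting variables $c=a+b$ and $d=ab+ba$; this $\Phi_P(c,d)$ is, by definition, the $cd$-index of $P$. In other words, $\Psi_P(a,b)=\Phi_P(a+b,\,ab+ba)$.

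The key step is then to compose the two substitutions. Applying the homomorphism $a\mapsto a+b$, $b\mapsto b$ to the identity $\Psi_P(a,b)=\Phi_P(a+b,ab+ba)$ and invoking the first step, I would obtain $\Upsilon_P(a,b)=\Phi_P\bigl((a+b)+b,\ (a+b)b+b(a+b)\bigr)$. A short computation in the noncommuting variables evaluates the two arguments: $(a+b)+b=a+2b$, while $(a+b)b+b(a+b)=ab+b^2+ba+b^2=ab+ba+2b^2$. Hence $\Upsilon_P(a,b)=\Phi_P(a+2b,\ ab+ba+2b^2)$, so that rewriting $\Upsilon_P$ as a polynomial in $c=a+2b$ and $d=ab+ba+2b^2$ recovers exactly the $cd$-index polynomial $\Phi_P$, as claimed.

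There is no serious obstacle here: the corollary is a purely formal consequence of the Bayer--Klapper theorem together with the substitution relating $\Upsilon_P$ and $\Psi_P$. The only point demanding care is that all the substitutions are homomorphisms of the free associative (noncommutative) algebra, so that $d=ab+ba$ must be transformed term by term, without ever commuting $a$ past $b$; the resulting extra summand $2b^2$ in the image of $d$ is precisely what distinguishes this computation from its commutative analogue.
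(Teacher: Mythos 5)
Your proposal is correct and takes essentially the same approach as the paper, which justifies the corollary in one line by noting that $\Upsilon_P(a-b,b)=\Psi_P(a,b)$ is equivalent to $\Upsilon_P(a,b)=\Psi_P(a+b,b)$ and then appealing to the Bayer--Klapper theorem. Your write-up merely makes explicit the substitution computation $(a+b)+b=a+2b$ and $(a+b)b+b(a+b)=ab+ba+2b^2$ that the paper leaves to the reader.
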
  
Note that this statement is a direct consequence of
$\Upsilon_P(a-b,b)=\Psi_P(a,b)$ which is equivalent to
$\Upsilon_P(a,b)=\Psi_P(a+b,b)$.

\subsection{Tchebyshev triangulations and Tchebyshev transforms}

A finite simplicial complex $\triangle$ is a family of subsets of a
finite vertex set $V$. The elements of $\triangle$ are called {\em
  faces}, subject to the following rules: a subset of any face is a face
and every singleton is a face. The {\em dimension} of a face is one less than
the number of its elements, the dimension $d-1$ of the
complex $\triangle$ is the maximum of the dimension of its faces. The
number of $j$-dimensional faces is denoted by $f_j(\triangle)$ and the
vector $(f_{-1}, f_0,\ldots,f_{d-1})$ is the {\em $f$-vector} of the
simplicial complex. We define the {\em $F$-polynomial $F_{\triangle}(x)$
} of a finite simplicial complex $\triangle$ as
\begin{align}
F_{\triangle}(x)=\sum_{j=0}^{d} f_{j-1}(\triangle)\cdot
\left(\frac{x-1}{2}\right)^j. 
\end{align}
The {\em join $\triangle_1*\triangle_2$} of two simplicial
complexes $\triangle_1$ and $\triangle_2$ on disjoint vertex sets is the
simplicial complex  
$\triangle_1*\triangle_2=\{\sigma\cup\tau\::\: \sigma\in\triangle_1,
\tau\in\triangle_2\}$. It is easy to show that the $F$-polynomials
satisfy $F_{\triangle_1*\triangle_2}(x)=F_{\triangle_1}(x)\cdot
F_{\triangle_2}(x)$. A special instance of the join operation is the
{\em suspension} operation: the suspension $\triangle*\partial(\triangle^1)$ 
of a simplicial complex $\triangle$ is the join of $\triangle$ with the
boundary complex of the one dimensional simplex. (A $(d-1)$-dimensional
simplex is the family of all subsets of a $d$-element set, its boundary
is obtained by removing its only facet from the list of faces.)
The {\em link} of a face $\sigma$
is the subcomplex $\link_{\triangle}(\sigma)=\{\tau\in \triangle:\ 
\sigma\cap \tau=\emptyset, \ \sigma\cup \tau\in K\}$. A special type of
simplicial complex we will focus on is the {\em order complex
  $\triangle(P)$} of a finite partially ordered set $P$: its vertices
are the elements of $P$ and its faces are the increasing chains. The
order complex of a finite poset is a {\em flag complex}: its minimal
non-faces are all two-element sets (these are the pairs of incomparable
elements). 

Every finite simplicial complex $\triangle$
has a {\em standard geometric realization} in the vector space with
a basis $\{e_v\::\: v\in V\}$ indexed by the vertices, where each face
$\sigma$ is realized by the convex hull of the basis vectors $e_v$
indexed by the elements of $\sigma$. 
\begin{definition}
We define a {\em Tchebyshev triangulation} $T(\triangle)$ of a finite
simplicial complex $\triangle$ as follows. We number the edges $e_1,
e_2, \ldots, e_{f_1(\triangle)}$ in some order, and we associate to each
edge $e_i=\{u_i,v_i\}$ a midpoint $w_i$. We associate a sequence
$\triangle_0:=\triangle,\triangle_{1},\triangle_{2}\ldots,\triangle_{f_1(\triangle)}$
of simplicial complexes to this numbering of edges, as
follows. For each $i\geq 1$, the complex $\triangle_{i}$ is obtained
from $\triangle_{i-1}$ by replacing the edge $e_i$ and the faces
contained therein with the one-dimensional simplicial complex $L_i$,
consisting of the vertex set $\{u_i,v_i,w_i\}$ and edge set
$\{\{u_i,w_i\}, \{w_i,v_i\}\}$,  and by replacing the family of faces
$\{e_i\cup\tau\:: \tau\in \link_{\Delta_{i-1}}(e_i)\}$ containing $e_i$
with the family of faces $\{\sigma'\cup \tau \:: \sigma'\in L_i\}$. In
other words, we subdivide the edge $e_i$ into a path of length $2$ by
adding the midpoint $w_i$ and we also subdivide all faces containing
$e_i$, by performing a stellar subdivision. 
\end{definition}  
As it is defined by a sequence of a stellar subdivisions, it is clear
that any Tchebyshev triangulation of $\triangle$ as defined above is indeed a
triangulation of $\triangle$ in the following sense: if we consider the
standard geometric realization of $\triangle$ and associate to each
midpoint $w$ the midpoint of the line segment realizing the
corresponding edge $\{u,v\}$ then the convex hulls of the vertex sets
representing the faces of $T(\triangle)$ represent a triangulation of
the geometric realization of $\triangle$. Furthermore, the
following statement is a special case of~\cite[Theorem 3.3]{Hetyei-Nevo}
and can also be derived from~\cite[Example 2.8]{Athanasiadis-survey}, combined
with Stanley's locality formula~\cite[Theorem 3.2]{Stanley-subdivisions}.

\begin{theorem}
\label{thm:Hetyei-Nevo}  
All Tchebyshev triangulations of a simplicial complex have the same $f$-vector.
\end{theorem}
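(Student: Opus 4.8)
The plan is to encode the $f$-vector in the $F$-polynomial and exploit its good behavior under the joins that occur in a stellar subdivision. Since the assignment $(f_{-1},f_0,\ldots)\mapsto F_{\triangle}(x)$ is a linear bijection (the polynomials $\bigl(\tfrac{x-1}{2}\bigr)^{j}$ form a basis), the $f$-vector of $T(\triangle)$ is order-independent if and only if $F_{T(\triangle)}(x)$ is. I would therefore first isolate the effect on the $F$-polynomial of one step, the passage from $\triangle_{i-1}$ to $\triangle_i$, and then face the fact that the intermediate links $\link_{\triangle_{i-1}}(e_i)$ are themselves order-dependent.

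Writing $\ell_i=\link_{\triangle_{i-1}}(e_i)$, the $i$-th step removes exactly the faces of the closed star $\overline{e_i}*\ell_i$ that contain $e_i$ and inserts the faces of $L_i*\ell_i$ that contain the new vertex $w_i$, the two complexes agreeing on $\partial\overline{e_i}*\ell_i$. Using multiplicativity of $F$ under joins together with the elementary values $F_{\overline{e_i}}(x)=\bigl(\tfrac{x+1}{2}\bigr)^{2}$ and $F_{L_i}(x)=\tfrac{x(x+1)}{2}$, the common part cancels in the difference and the net change is
\[
F_{\triangle_i}(x)-F_{\triangle_{i-1}}(x)=\bigl(F_{L_i}(x)-F_{\overline{e_i}}(x)\bigr)\,F_{\ell_i}(x)=\frac{x^{2}-1}{4}\,F_{\ell_i}(x).
\]
Summing over $i$ yields
\[
F_{T(\triangle)}(x)=F_{\triangle}(x)+\frac{x^{2}-1}{4}\sum_{i=1}^{f_1(\triangle)}F_{\link_{\triangle_{i-1}}(e_i)}(x),
\]
so the theorem is equivalent to the order-independence of the sum on the right.

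The main obstacle is precisely this order-independence: subdividing $e_j$ before $e_i$ changes which faces contain $e_i$, and can even introduce midpoints $w_j$ into $\link_{\triangle_{i-1}}(e_i)$, so no term-by-term matching of the summands is available. Rather than argue about the intermediate links directly, I would invoke Stanley's locality formula for the geometric subdivision $T(\triangle)$ of $\triangle$ (the excerpt already records that $T(\triangle)$ is a genuine geometric triangulation). Passing to $h$-polynomials through the standard invertible relation, the locality formula \cite[Theorem 3.2]{Stanley-subdivisions} expresses
\[
h\bigl(T(\triangle),x\bigr)=\sum_{\sigma\in\triangle}\ell_{\sigma}\bigl(T(\triangle)|_{\sigma},x\bigr)\cdot h\bigl(\link_{\triangle}(\sigma),x\bigr),
\]
a sum of local $h$-contributions weighted by the $h$-polynomials of the links taken in the \emph{original} complex $\triangle$. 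These links are manifestly order-independent, so everything reduces to showing that each local $h$-polynomial $\ell_{\sigma}(T(\triangle)|_{\sigma},x)$ is independent of the chosen order.

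This last point is the heart of the matter, and the step I expect to be genuinely hard; the generating-function bookkeeping above is, by contrast, routine. The restriction $T(\triangle)|_{\sigma}$ to an original face $\sigma$ is again a subdivision obtained by adjoining the midpoint of every edge of the simplex on $\sigma$ and performing stellar subdivisions, i.e.\ a Tchebyshev triangulation of a single simplex. I would establish that its local $h$-polynomial does not depend on the order, exploiting the full symmetry of the ``all midpoints'' construction together with the recursive behavior of the local $h$-vector under one stellar edge subdivision; this is exactly the content generalized in \cite[Theorem 3.3]{Hetyei-Nevo} and can alternatively be extracted from \cite[Example 2.8]{Athanasiadis-survey}. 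Granting it, summing the fixed local contributions against the fixed link $h$-polynomials produces an order-independent $h(T(\triangle),x)$, hence an order-independent $f$-vector. As a consistency check one verifies on simplices that the whole transformation is realized by the substitution $x^{k}\mapsto T_{k}(x)$ of Tchebyshev polynomials of the first kind into the expansion $F_{\triangle}(x)=\sum_{k}a_{k}x^{k}$, a substitution that is patently order-free.
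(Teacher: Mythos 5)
Your proposal is correct and follows essentially the route the paper itself indicates: the paper gives no self-contained proof but cites \cite[Theorem 3.3]{Hetyei-Nevo} and the alternative derivation via \cite[Example 2.8]{Athanasiadis-survey} combined with Stanley's locality formula, and your argument is a fleshed-out version of that second route (your $F$-polynomial bookkeeping and the reduction to order-independence of the local $h$-polynomials of $T(\triangle)|_{\sigma}$ are both correct). Since you, like the paper, ultimately defer the core fact --- order-independence for a Tchebyshev triangulation of a single simplex --- to the same references, there is nothing here that diverges from or falls short of the paper's own treatment.
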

\begin{remark}
{\em   
\cite[Theorem 3.3]{Hetyei-Nevo} allows replacing the operation of
taking the midpoint of each edge with higher dimensional analogues. On
the other hand, every Tchebyshev triangulation of $\triangle$ dissects
each $k$-dimensional face into exactly $2^k$ faces of the same dimension. This
property is shared by other triangulations of $\triangle$, such as the second
edgewise subdivision, introduced by Freudenthal~\cite{Freudenthal}. All
triangulations of $\triangle$ with this property have the same
$f$-vector by~\cite[Example 2.8]{Athanasiadis-survey}, 
combined with~\cite[Theorem 3.2]{Stanley-subdivisions}. Thus, the
formulas obtained by Brenti 
and Welker~\cite{Brenti-Welker} for the $h$-vector of the second
edgewise subdivision of $\triangle$ 
apply to Tchebyshev triangulations as well. See also
Remark~\ref{rem:second} below.  
} 
\end{remark}  

Tchebyshev triangulations of the second kind were first introduced
in~\cite{Hetyei-tt} in connection with some special Tchebyshev
triangulations of the second kind. The idea was generalized to arbitrary
generalized Tchebyshev triangulations in~\cite{Hetyei-Nevo}. Here we
specialize the definition introduced in~\cite{Hetyei-Nevo} to Tchebyshev
triangulations as follows. Recall that the {\em link
  $\link_{\triangle}(\tau)$} of a 
face $\tau$ in a simplicial complex $\triangle$ is the set of faces
$\{\sigma-\tau\: : \: \sigma\in \triangle, \tau\subseteq \sigma\}$. 
\begin{definition}
Let $\triangle$ be an arbitrary simplicial complex with vertex set $V$
and $T(\triangle)$ a Tchebyshev triangulation. We define the {\em
  corresponding Tchebyshev triangulation of the second kind
  $U(\triangle)$} as the collection of the links
$\link_{T(\triangle)}(\{v\})$ for all vertices $v\in V$.
\end{definition}  
Note that $U(\triangle)$ is not a simplicial complex, but a multiset of
simplicial complexes. We define its $f$-vector ($F$-polynomial) as the
sum of the $f$-vectors ($F$-polynomials) of the complexes
$\link_{T(\triangle)}(\{v\})$ for all $v\in V$. The following result is
a direct consequence of~\cite[Theorem 3.3]{Hetyei-Nevo}. 
\begin{theorem}[Hetyei and Nevo]
\label{thm:Hetyei-Nevo2}  
All Tchebyshev triangulations of the second kind of a simplicial complex
have the same $f$-vector. 
\end{theorem}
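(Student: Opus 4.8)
The plan is to reduce the claim to the already-established invariance of the ordinary $f$-vector (Theorem~\ref{thm:Hetyei-Nevo}), refined by a single extra piece of data: the number of \emph{original} vertices carried by each face of $T(\triangle)$. First I would rewrite the second-kind $f$-vector as a weighted face count of the single complex $T(\triangle)$. By the defining correspondence between $(j-1)$-faces of $\link_{T(\triangle)}(\{v\})$ and $j$-faces of $T(\triangle)$ through $v$, for each $j\ge 0$ one has
\begin{equation}
\sum_{v\in V} f_{j-1}\big(\link_{T(\triangle)}(\{v\})\big)
=\sum_{\substack{\gamma\in T(\triangle)\\ \dim\gamma=j}} |\gamma\cap V|
=\sum_{i\ge 1} i\, g_{j,i},
\end{equation}
where $g_{j,i}$ denotes the number of $j$-dimensional faces $\gamma$ of $T(\triangle)$ having exactly $i$ vertices in the original vertex set $V$. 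Thus it suffices to prove that each refined face number $g_{j,i}$ is independent of the chosen ordering of the edges.

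Next I would exploit the fact that $T(\triangle)$ is a \emph{geometric} subdivision of $\triangle$: every face $\gamma$ of $T(\triangle)$ has a well-defined carrier, the unique minimal face $\sigma\in\triangle$ whose closed realization contains $\gamma$. Since the only points of $V$ lying in the realization of $\sigma$ are the vertices of $\sigma$, the original vertices of $\gamma$ are precisely the vertices of $\sigma$ lying on $\gamma$; in particular $i\le \dim\sigma+1$. Grouping faces by their carrier splits $g_{j,i}$ into a sum, over $\sigma\in\triangle$, of the corresponding refined face numbers of the restriction of $T(\triangle)$ to the single simplex $\sigma$. By the restriction property of the stellar subdivision process (the same property that forces each $k$-face to be dissected into $2^k$ pieces, cf.\ the Remark above), that restriction is itself a Tchebyshev triangulation of the simplex $\sigma$. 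I would then argue that this local contribution depends only on $\dim\sigma$, neither on $\sigma$ nor on the ordering; summing the dimension-dependent local contributions against the numbers $f_{\dim\sigma}(\triangle)$ expresses each $g_{j,i}$ in terms of the $f$-vector of $\triangle$ alone, forcing it to be an invariant.

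The main obstacle is precisely this refined local invariance: within a single $k$-simplex one must show that the number of $j$-faces of the induced Tchebyshev triangulation having a prescribed number of vertices among the $k+1$ original ones is the same for every ordering of the edges. This is finer than the plain statement that a $k$-face is dissected into $2^k$ pieces, because it must remember which vertices are old. I expect the cleanest route is to invoke the full strength of~\cite[Theorem 3.3]{Hetyei-Nevo}: that result tracks not only the subdivided complex but the induced subdivisions of all vertex links, and its invariance is phrased at the level of local $h$-polynomials, which already encode the distribution of faces between old and new vertices. Indeed $U(\triangle)$ is by definition the multiset of links $\link_{T(\triangle)}(\{v\})$, $v\in V$, so the statement is exactly the link-level output of that theorem, obtained through Stanley's locality formula~\cite[Theorem 3.2]{Stanley-subdivisions} together with the computation of the relevant local $h$-polynomials in~\cite{Athanasiadis-survey,Hetyei-tt}. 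Failing a direct appeal, I would instead establish the refined local invariance by hand, giving an explicit combinatorial model of the faces of a Tchebyshev-triangulated simplex that separates original from subdivision vertices, and checking that a stellar subdivision of one further edge alters the refined count in an order-independent way.
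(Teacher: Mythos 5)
Your ``cleanest route'' is in fact the paper's entire proof: Theorem~\ref{thm:Hetyei-Nevo2} is presented there with no argument beyond the single sentence that it is a direct consequence of \cite[Theorem 3.3]{Hetyei-Nevo}, since $U(\triangle)$ is by definition the multiset of links of the original vertices and the invariance of its total $f$-vector is part of what that theorem asserts. So to the extent that you rest on that citation, you agree with the paper, but then the two paragraphs of machinery preceding it do no work.

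If instead the weighted-count/carrier reduction is meant as a self-contained proof, it has a genuine gap exactly where you flag it, and the reduction does not make that gap smaller. Two concrete points. First, the setup is heavier than needed: every original edge is subdivided at some stage, and a stellar subdivision at a midpoint only creates edges incident to that (new) midpoint, so no face of $T(\triangle)$ contains two original vertices. Hence $|\gamma\cap V|\in\{0,1\}$, the weighted sum $\sum_i i\,g_{j,i}$ is just $g_{j,1}$, the number of $j$-faces meeting $V$, and your ``refined'' invariance is equivalent to the theorem being proved together with Theorem~\ref{thm:Hetyei-Nevo} --- it is not an independent stepping stone. Second, localizing to a single simplex via carriers does not reduce the difficulty: the order-independence of the number of $j$-faces meeting $V$ inside one Tchebyshev-triangulated simplex is essentially the full content of the theorem, and it cannot follow from the invariance of the plain $f$-vector alone, because Remark~\ref{rem:second} exhibits two triangulations of the same complex with identical $f$-vectors (a Tchebyshev triangulation and the second edgewise subdivision) whose links of original vertices have different total $f$-vectors. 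Thus the ``by hand'' fallback --- the only part of your argument not outsourced to \cite{Hetyei-Nevo} --- is precisely the missing proof, and as written the proposal establishes the statement only modulo the citation that constitutes the paper's own proof.
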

\begin{figure}[h]
  \input{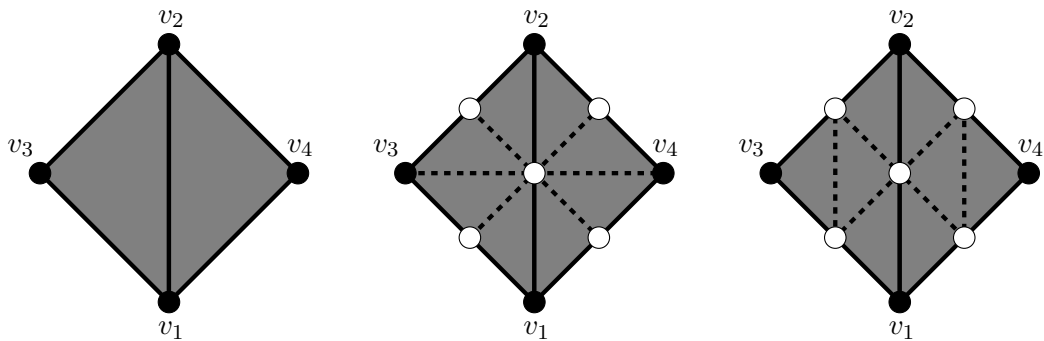} % 90%
\caption{A Tchebyshev triangulation and a second edgewise triangulation}  
\label{fig:triangulations}
\end{figure}  
\begin{remark}
{\em While Tchebyshev triangulations have the same face numbers as the
  second edgewise triangulation, this result cannot be extended to
  Tchebyshev triangulations of the second
  kind. Figure~\ref{fig:triangulations} shows a simplicial complex with
  $4$ (black) vertices, $5$ edges and $2$ two-dimensional faces. A
  Tchebyshev triangulation (shown in the middle, obtained by performing
  the first stellar subdivision at the midpoint of the edge
  $\{v_1,v_2\}$) has the same face   numbers as the second edgewise
  triangulation (on the right). However, the sum of the $f$-vectors of the
  links of the original vertices is different in the two triangulations.
}
\label{rem:second}
\end{remark}

The following result has been shown in~\cite[Propositions 3.3 and
  4.4]{Hetyei-tt} for a specific Tchebyshev triangulation. By the
preceding theorems it holds for all Tchebyshev triangulations and
motivates the choice of the terminology. The {\em Tchebyshev transform
  $T$ ($U$) of the first (second) kind of polynomials} used in the
next result is  the linear map ${\mathbb 
  R}[x]\longrightarrow {\mathbb R}[x]$ sending $x^n$ into the Tchebyshev
polynomial of the first kind $T_n(x)$ (second kind $U_n(x)$).
\begin{theorem}
  \label{thm:tt}
For any finite simplicial complex $\triangle$, the $F$-polynomial of any
Tchebyshev triangulation $T(\triangle)$ is the Tchebyshev transform of
the first kind of the $F$-polynomial of $\triangle$:
$$
F_{T(\triangle)}(x)=T(F_{\triangle}(x)).
$$
Similarly, the $F$-polynomial of any Tchebyshev triangulation
$U(\triangle)$ of the second kind is half of the Tchebyshev transform of
the second kind of the $F$-polynomial of $\triangle$:
$$
F_{U(\triangle)}(x)=\frac{1}{2}\cdot U(F_{\triangle}(x)).
$$
\end{theorem}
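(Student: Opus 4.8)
The plan is to use the two invariance results to collapse the universal quantifier over Tchebyshev triangulations, and then to reduce the resulting identity, by linearity, to the single case of a simplex. By Theorem~\ref{thm:Hetyei-Nevo} (resp.\ Theorem~\ref{thm:Hetyei-Nevo2}) the $f$-vector, hence the $F$-polynomial, of $T(\triangle)$ (resp.\ of $U(\triangle)$) is the same for every ordering of the edges; moreover, because a Tchebyshev triangulation cuts each $k$-face into $2^k$ faces of the same dimension, the locality of the subdivision (the results quoted in the Remark following Theorem~\ref{thm:Hetyei-Nevo}) expresses each face number of $T(\triangle)$, and of the links of its original vertices, as a \emph{fixed linear combination} of the numbers $f_{k-1}(\triangle)$. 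Thus $\triangle\mapsto F_{T(\triangle)}$ and $\triangle\mapsto F_{U(\triangle)}$ descend to linear operators on polynomials, and since $T$ and $U$ are linear it suffices to check both identities on the spanning family $\{F_{\triangle^{(n-1)}}\}_{n\ge 0}$, where $\triangle^{(n-1)}$ is the $(n-1)$-simplex. Here $F_{\triangle^{(n-1)}}(x)=\bigl(\tfrac{x+1}{2}\bigr)^{n}=:b_n$, and the $b_n$ form a basis because $\tfrac{x+1}{2}=\tfrac{x-1}{2}+1$.

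For the first kind I would handle the two sides through the substitution $x=\tfrac12(t+t^{-1})$, under which $T_k(x)=\tfrac12(t^k+t^{-k})$. Expanding $b_n$ in powers of $x$ gives the closed form $T(b_n)=\tfrac12(\alpha^n+\beta^n)$ with $\alpha=\tfrac{1+t}{2}$ and $\beta=\tfrac{1+t^{-1}}{2}$; since $\alpha+\beta=1+x$ and $\alpha\beta=\tfrac{1+x}{2}$, the sequence $T(b_n)$ obeys the three-term recurrence $g_{n+1}=(1+x)g_n-\tfrac{1+x}{2}\,g_{n-1}$ with $g_0=1$, $g_1=\tfrac{x+1}{2}$. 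It then remains to show that the face numbers $F_{T(\triangle^{(n-1)})}$ of the (fully symmetric) Tchebyshev triangulation of the simplex satisfy the very same recurrence and initial conditions; writing $\triangle^{(n)}=v_0*\triangle^{(n-1)}$ as a cone, this should follow by analysing how subdividing the edges emanating from the apex $v_0$ changes the face count, the coefficients $1+x$ and $\tfrac{1+x}{2}$ arising from an inclusion-exclusion over $v_0$.

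For the second kind the same two reductions apply verbatim, so one is left to compute $F_{U(\triangle^{(n-1)})}$. By symmetry the $n$ original vertices of the simplex have isomorphic links in $T(\triangle^{(n-1)})$, so this reduces to evaluating the $F$-polynomial of a single link $\link_{T(\triangle^{(n-1)})}(\{v\})$ and multiplying by $n$, and then comparing with $\tfrac12 U(b_n)$. The bridge between the two kinds is the identity $F_{T(\triangle)}'(x)=U\bigl(F_\triangle'(x)\bigr)$, which is obtained at once by differentiating the (already proven) first identity and using $T_n'(x)=n\,U_{n-1}(x)$; this is what one expects to convert the first-kind computation into the second-kind one.

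I expect the decisive difficulty to be exactly the simplex base case: producing and solving the recurrence for $F_{T(\triangle^{(n-1)})}$ via the cone decomposition (or, equivalently, a direct bijective enumeration of the faces of the subdivided simplex), and, for the second kind, the parallel bookkeeping of the vertex links together with pinning down the normalizing factor $\tfrac12$. All the remaining ingredients---the passage from one Tchebyshev triangulation to all of them, the linearity, and the reduction to simplices---are formal consequences of Theorems~\ref{thm:Hetyei-Nevo} and~\ref{thm:Hetyei-Nevo2}.
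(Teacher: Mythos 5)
Your opening move---use Theorems~\ref{thm:Hetyei-Nevo} and~\ref{thm:Hetyei-Nevo2} to reduce ``any Tchebyshev triangulation'' to a single one---is precisely what the paper does; beyond that the paper gives no proof at all, but simply cites Propositions 3.3 and 4.4 of \cite{Hetyei-tt}, where the two identities are established for one specific Tchebyshev triangulation. Everything after your first reduction is therefore your own attempt to re-derive the cited result, and that attempt has a genuine hole exactly where you place ``the decisive difficulty'': the simplex base case. For the first identity you correctly compute that $T(b_n)$ satisfies $g_{n+1}=(1+x)g_n-\tfrac{1+x}{2}g_{n-1}$ with $g_0=1$, $g_1=\tfrac{x+1}{2}$, but the matching recurrence for $F_{T(\triangle^{(n-1)})}$ is only asserted to ``follow by analysing'' the cone decomposition. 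Since all the content of the theorem lives in that computation (the passage from one triangulation to all of them, and the linearity via locality, are indeed formal), the first identity remains unproved as written.

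The second-kind half contains steps that are not merely missing but false. First, the links of the original vertices of the simplex in $T(\triangle^{(n-1)})$ are \emph{not} isomorphic ``by symmetry'': the construction depends on a linear ordering of the edges, which destroys the $S_n$-symmetry. For the $2$-simplex with edges subdivided in the order $\{1,2\},\{1,3\},\{2,3\}$, the links of the vertices $1$ and $2$ have $f$-vector $(1,2,1)$ while the link of $3$ has $f$-vector $(1,3,2)$; only the \emph{sum} of the link $f$-vectors is an invariant (Theorem~\ref{thm:Hetyei-Nevo2}), so the reduction to ``one link times $n$'' fails. Second, the linearity of $\triangle\mapsto F_{U(\triangle)}$ in $f(\triangle)$ does not follow from the locality results you invoke: Remark~\ref{rem:second} shows that the sum of the link $f$-vectors is \emph{not} determined by the property of cutting each $k$-face into $2^k$ pieces (the second edgewise subdivision gives a different answer), so ``the same two reductions apply verbatim'' is unjustified for $U$; here you really do need Theorem~\ref{thm:Hetyei-Nevo2} plus a separate argument. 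Third, your bridge identity aims at the wrong quantity: the classical relation $F'_\Gamma(x)=\tfrac12\sum_{v}F_{\link_\Gamma(v)}(x)$ runs over \emph{all} vertices of $\Gamma=T(\triangle)$, midpoints included, so $F'_{T(\triangle)}=U(F'_\triangle)$ controls $F_{U(\triangle)}$ only after the contributions of the midpoint links are subtracted off, which you do not address. A careful treatment of even the smallest cases (a single edge, the triangle) is needed to pin down the normalization in the second identity, and that is exactly the verification your sketch defers.
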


The notion of the Tchebyshev triangulation of a simplicial complex was
motivated by a poset operation, first considered in~\cite{Hetyei-tch}
and formally introduced in~\cite{Hetyei-mfp} .
\begin{definition}
Given a locally finite poset $P$, its {\em Tchebyshev transform of the
  first kind $T(P)$} is
the poset whose elements are the intervals $[x,y]\subset P$ satisfying
$x\neq y$, ordered by the following relation: $[x_1,y_1]\leq [x_2,y_2]$
if either $y_1\leq x_2$ or both $x_1=x_2$ and $y_1\leq y_2$ hold.
\end{definition}
A geometric interpretation of this operation may be found
in~\cite[Theorem 1.10]{Hetyei-mfp}. The graded variant of this poset
operation is defined in~\cite{Ehrenborg-Readdy-Tch}.  
Given a graded  poset $P$ with minimum element $\0$ and maximum element
$\1$, we introduce a new minimum element $\nm<\0$  and a new maximum
element $\2$. The {\em graded Tchebyshev transform of the first kind} of
a graded poset $P$ is then the interval $[(\nm,\0),(\1,\2)]$ in
$T(P\cup\{\nm,\2\})$.  By abuse of notation we also denote the graded
Tchebyshev transform of a graded poset $P$ by $T(P)$. It is easy to show
that $T(P)$ is also a graded poset, whose rank is one more than that of $P$.
The following result may be found in~\cite[Theorem 1.5]{Hetyei-tt}.
\begin{theorem}
\label{thm:tpgraded}
Let $P$ be a graded poset and $T(P)$ its graded Tchebyshev transform.
Then the order complex $\triangle(T(P)\setminus \{(\nm,\0), (\1,\2)\})$
is a Tchebyshev triangulation of the suspension of $\triangle(P\setminus
\{\0,\1\})$. 
\end{theorem}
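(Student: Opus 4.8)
The plan is to set up an explicit dictionary between the elements of $T(P)\setminus\{(\nm,\0),(\1,\2)\}$ and the vertices (original vertices together with edge midpoints) of a Tchebyshev triangulation of the suspension $\Sigma\df\triangle(P\setminus\{\0,\1\})*\partial(\triangle^1)$, and then to check that the chains of this interval poset are exactly the faces produced by a suitable sequence of edge-midpoint stellar subdivisions.

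First I would describe the elements explicitly. Writing $Q\df P\cup\{\nm,\2\}$ and reading off the order on $T(Q)$, one finds that after discarding the bottom $(\nm,\0)=[\nm,\0]$ and the top $(\1,\2)=[\1,\2]$, the poset $T(P)\setminus\{(\nm,\0),(\1,\2)\}$ consists precisely of the proper intervals $[x,y]$ of $Q$ with $\nm\le x<y\le\1$, apart from $[\nm,\0]$. I then sort these by their top. The intervals $[z,\1]$ with top $\1$ will be the original vertices of $\Sigma$: put $[\nm,\1]\mapsto N$ and $[\0,\1]\mapsto S$ for the two suspension poles, and $[x,\1]\mapsto x$ for each $x$ with $\0<x<\1$. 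Every other interval $[p,q]$ has $q<\1$ and is assigned to the midpoint of the edge of $\Sigma$ joining the images of $[p,\1]$ and $[q,\1]$. The only unordered pair $\{[a,\1],[b,\1]\}$ that fails to be an edge of $\Sigma$ is $\{N,S\}=\{[\nm,\1],[\0,\1]\}$, whose interval $[\nm,\0]$ is exactly the element we removed; so this assignment is a bijection onto the vertices and edge midpoints of $\Sigma$.

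Next I would read off the shape of a chain. From $[x_1,y_1]\le[x_2,y_2]$ iff $y_1\le x_2$ or ($x_1=x_2$ and $y_1\le y_2$), the bottoms along any chain are weakly increasing, and at most one interval can have top $\1$; when present, such an interval is forced to be the maximum of the chain. Hence every face of $\triangle(T(P)\setminus\{(\nm,\0),(\1,\2)\})$ contains at most one original vertex, located at its top, just as in a Tchebyshev triangulation. Since $[p,q]\le[p,\1]$ and $[p,q]\le[q,\1]$ always hold, each midpoint interval is comparable to both endpoints of the edge it is meant to subdivide, which is what allows it to be placed at that midpoint.

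The real work---and the step I expect to be the main obstacle---is to show that the chains enumerated above are the faces of an honest Tchebyshev triangulation, that is, of a complex built by a genuine sequence of stellar subdivisions at edge midpoints, and not merely of some ad hoc midpoint subdivision. For this I would fix the edge order that subdivides $\{[a,\1],[b,\1]\}$ (with $a<b$ in $Q$) in lexicographically increasing order of the pair $(a,b)$, and induct along this order. The inductive invariant is that after a lexicographic initial segment of edges has been subdivided, the current complex agrees with $\triangle(T(P)\setminus\{(\nm,\0),(\1,\2)\})$ on the already-created vertices while the unprocessed edges survive as coarse faces; the stellar subdivision at a midpoint $[p,q]$ then replaces exactly the faces through the edge $\{[p,\1],[q,\1]\}$ by the chains through $[p,q]$, because the two clauses of the order relation ($y_1\le x_2$ versus $x_1=x_2$) encode precisely the two halves of the stellar split. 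Verifying that nothing else is created or destroyed is the bookkeeping heart of the proof; the smallest cases (a rank-one poset, a chain, and the rank-two Boolean lattice) already display the mechanism and serve to calibrate the induction. Finally, Theorem~\ref{thm:Hetyei-Nevo} assures that the $f$-vector does not depend on the chosen edge order, so committing to this particular order costs nothing.
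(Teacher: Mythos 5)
The first thing to say is that the paper contains no proof of Theorem~\ref{thm:tpgraded} at all: it is quoted verbatim from~\cite[Theorem 1.5]{Hetyei-tt}, so there is no in-paper argument to measure yours against. The closest internal analogue is the proof of Theorem~\ref{thm:it} for the poset of intervals $I(P)$, together with the derivation of Proposition~\ref{prop:ipgraded} from it. Your setup is correct and matches what such a proof must look like: the elements of $T(P)\setminus\{(\nm,\0),(\1,\2)\}$ are exactly the intervals $[x,y]$ of $Q=P\cup\{\nm,\2\}$ with $x<y\le \1$, minus $[\nm,\0]$; the intervals $[z,\1]$ biject with the vertices of the suspension; each $[p,q]$ with $q<\1$ sits on the edge $\{[p,\1],[q,\1]\}$; and the deleted element $[\nm,\0]$ is precisely the missing midpoint of the non-edge $\{N,S\}$. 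I also checked that your lexicographic order (which, note, is only well-defined after fixing a linear extension of $Q$) meets the adjacency constraints that the order complex forces on the subdivision order: for a chain $p<q<r$ the relation $[p,q]\le [q,\1]$ but $[p,r]\not\le[q,\1]$ forces $[p,q]$ to be inserted before both $[p,r]$ and $[q,r]$, while for $r<p<q$ the incomparability of $[p,q]$ with $[r,\1]$ forces some interval with bottom $r$ to precede $[p,q]$; lex order satisfies all of these. It is worth noting that this is essentially the \emph{opposite} of the order used in Theorem~\ref{thm:it} for $I(P)$ (there, containment-larger intervals go first), so you were right not to transplant that ordering.

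The genuine gap is that the step you yourself label ``the bookkeeping heart of the proof'' is announced but never carried out: you state an inductive invariant without verifying that a stellar subdivision preserves it, and without that verification nothing has been proved beyond a vertex-and-midpoint dictionary. The cleanest way to close the gap is to imitate the paper's proof of Theorem~\ref{thm:it}: show by induction that every intermediate complex $\triangle_i$ is a \emph{flag} complex and describe its minimal non-faces explicitly (pairs $\{[p,\1],[q,\1]\}$ whose midpoint has already been inserted, pairs of already-inserted elements that are incomparable in $T(P)$, and pairs involving a vertex of $\Sigma$ not comparable to an inserted midpoint). A stellar subdivision of a flag complex at an edge midpoint is again flag, so the induction only requires tracking minimal non-faces, i.e.\ the $1$-skeleton; since the order complex of $T(P)\setminus\{(\nm,\0),(\1,\2)\}$ is also flag, agreement of the two complexes then reduces to the finite comparability check of pairs $[p,q]$, $[p',q']$ against the two clauses of the order relation, which is exactly the computation your sketch gestures at. Without the flagness observation (or an equivalent device), matching facets directly would be substantially more painful, and your proposal gives no indication of how you would control the higher-dimensional faces.
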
  
As a consequence of Theorem~\ref{thm:tpgraded}, we have
\begin{align}
\label{eq:tposet}  
  F_{\triangle(T(P)\setminus \{(\nm,\0), (\1,\2)\})}
&=T(x\cdot F_{\triangle(P\setminus \{\0,\1\})}). 
\end{align}
It has been shown by Ehrenborg and
Readdy~\cite{Ehrenborg-Readdy-Tch} that there is a linear transformation
assigning to the flag $f$-vectors of each graded poset $P$ of rank $n+1$
the flag $f$-vector of its Tchebyshev transform of the first kind
$T(P)$. For Eulerian posets, they also compute the effect on the
$cd$-index of taking the Tchebyshev transform of the first kind.      
They also studied the corresponding Tchebyshev transforms of the second
kind.

\subsection{Permutohedra of type $A$ and $B$}
Permutohedra of type $A$ and $B$ have a vast literature, the results
cited here may be found in~\cite{Fomin-Reading} and
in~\cite{Wachs-pt}.  

The type $A$ permutohedron $\operatorname{Perm}(A_{n-1})$ is the convex
hull of the $n!$ vertices\\ $(\pi(1),\ldots,\pi(n))\in\mathbb{R}^n$, where
$\pi$ is any permutation of the set
$[1,n]:=\{1,2,\ldots,n\}$. The type $B$ permutohedron
$\operatorname{Perm}(B_{n})$ is the convex hull of all points of the form
$(\pm \pi(1),\pm
\pi(2)\ldots,\pm\pi(n))\in\mathbb{R}^n$. Combinatorially equivalent
polytopes may be obtained by taking the $A_{n-1}$-orbit, respectively
$B_n$ orbit, of any sufficiently generic point in an $(n-1)$-dimensional
(respectively $n$-dimensional) space, and the convex hull of the points
in the orbit.~\cite[Section 2]{Fomin-Reading}.

The type $A$ and $B$ permutohedra are simple polytopes, their duals are
simplicial polytopes. The boundary complexes of these duals are
combinatorially equivalent to the {\em Coxeter complexes} of 
the respective Coxeter groups. The Coxeter complex of the symmetric
group $A_{n-1}$ on $[1,n]$ is the order complex of
$P([1,n])-\{\emptyset,[1,n]\})$, where $P([1,n])$ is the Boolean algebra
of rank $n$. The Coxeter complex of the Coxeter group $B_n$ is the
order complex of the face lattice of the $n$-dimensional
crosspolytope~\cite[Lecture 1]{Wachs-pt}. In either case we consider
the order complexes of the respective graded posets without their unique
minimum and maximum elements: adding these would make the order complex
contractible, whereas the boundary complexes of simplicial polytopes are
homeomorphic to spheres. The standard $n$-dimensional crosspolytope
is the convex hull of the vertices $\{\pm e_i\::\: i\in [1,n]\}$, where
$\{e_1,e_2,\ldots,e_n\}$ is the standard basis of ${\mathbb R^n}$. Each
nontrivial face of the crosspolytope is the convex hull of a set of
vertices of the form $\{e_i, i\in K^+\}\cup \{-e_i, i\in K^-\}$, where
$K^+$ and $K^-$ is are disjoint subsets of $[1,n]$ and their union is
not empty. Keeping in mind that each face of a polytope is the
intersection of all the facets containing it, we have the following
consequence.

\begin{corollary}
\label{cor:facesintervals}  
Each facet of $\operatorname{Perm}(B_{n})$ is uniquely labeled with a
pair of sets $(K^+,K^-)$ where $K^+$ and $K^-$ is are subsets
of $[1,n]$, satisfying $K^+\subseteq [1,n]-K^-$ and $K^+$ and $K^-$
cannot be both empty. For a set of valid labels
  $$\{(K_1^+,K_1^-),(K_2^+,K_2^-),\ldots, (K_m^+,K_m^-)\}$$
the intersection of the corresponding set of
facets is a nonempty face of $\operatorname{Perm}(B_{n})$ if and only if 
$$
K_1^+\subseteq K_2^+\subseteq \cdots \subseteq K_m^+ \subseteq
[1,n]-K_m^- \subseteq [1,n]-K_{m-1}^- \subseteq \cdots 
\subseteq [1,n]- K_1^-\quad\mbox{holds.}   
$$
\end{corollary}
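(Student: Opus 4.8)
The plan is to exploit the chain of combinatorial equivalences recorded just before the statement, reducing everything to the face lattice of the $n$-dimensional crosspolytope. First I would use that $\operatorname{Perm}(B_n)$ is a simple polytope, so its dual is simplicial and the face lattice of $\operatorname{Perm}(B_n)$ is anti-isomorphic to that of the dual. Under this anti-isomorphism the facets of $\operatorname{Perm}(B_n)$ correspond to the vertices of the dual, and---since the boundary complex of the dual is combinatorially equivalent to the Coxeter complex of $B_n$, which is the order complex of the proper part of the face lattice of the crosspolytope---these vertices correspond in turn to the nontrivial faces of the crosspolytope. As each such face is the convex hull of a set $\{e_i : i\in K^+\}\cup\{-e_i : i\in K^-\}$ with $K^+,K^-$ disjoint and not both empty, composing these bijections labels each facet of $\operatorname{Perm}(B_n)$ by a unique pair $(K^+,K^-)$ of the asserted type, which settles the first assertion.

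For the second assertion I would translate ``the intersection of a set of facets is a nonempty face'' through the same anti-isomorphism. A collection of facets of $\operatorname{Perm}(B_n)$ meets in a nonempty face precisely when the corresponding vertices of the simplicial dual form a face of its boundary complex; here the meet of the facets in the face lattice of $\operatorname{Perm}(B_n)$ corresponds to the join of the vertices in the dual, which is a proper face exactly when those vertices span one. Since an order complex is a flag complex, its minimal non-faces being the incomparable pairs, a set of vertices is a face exactly when it is pairwise comparable, that is, a chain in the face lattice of the crosspolytope. Thus the condition to be proved is that the labels $(K_1^+,K_1^-),\ldots,(K_m^+,K_m^-)$ form a chain of crosspolytope faces.

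It then remains to compute the containment order on these faces and match it with the displayed inclusions. A proper face of the crosspolytope is a simplex whose vertex set is exactly its defining set of signed basis vectors, so $\{e_i : i\in K^+\}\cup\{-e_i : i\in K^-\}$ is contained in $\{e_i : i\in L^+\}\cup\{-e_i : i\in L^-\}$ if and only if $K^+\subseteq L^+$ and $K^-\subseteq L^-$. Hence the labels form a chain iff, after reindexing in their unique increasing order, $K_1^+\subseteq\cdots\subseteq K_m^+$ and $K_1^-\subseteq\cdots\subseteq K_m^-$. Reading the displayed condition from its two ends inward, the left part $K_1^+\subseteq\cdots\subseteq K_m^+$ is the first of these chains, the right part $[1,n]-K_m^-\subseteq\cdots\subseteq[1,n]-K_1^-$ is the complement form of the second, and the single middle inclusion $K_m^+\subseteq[1,n]-K_m^-$ is exactly the disjointness $K_m^+\cap K_m^-=\emptyset$ of the top pair. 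I expect the only point needing care to be this last bookkeeping step: one must observe that, granting the two chains, disjointness of the maximal pair forces $K_i^+\cap K_i^-\subseteq K_m^+\cap K_m^-=\emptyset$ for every $i$, so the single middle inclusion already encodes the validity of all labels and no further constraint is hidden in the remaining inequalities.
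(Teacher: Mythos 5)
Your argument is correct and follows exactly the route the paper intends: the corollary is stated without a separate proof because it is meant to follow from the preceding identification of the dual boundary complex with the order complex of the proper part of the crosspolytope's face lattice, together with the flag-complex observation that faces of an order complex are chains. Your careful bookkeeping at the end (that disjointness of the maximal pair propagates down the chain, so the single middle inclusion suffices) correctly fills in the one detail the paper leaves implicit.
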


The triangle of $f$-vectors of the type $B$ Coxeter complexes is
given in sequence A145901 in~\cite{OEIS}.

\section{The poset of intervals as a Tchebyshev transform}
\label{sec:itcheb}

We will use Corollary~\ref{cor:facesintervals} to represent the type $B$
Coxeter complex using the {\em poset of intervals} of a Boolean
algebra. In this section we review this construction and show that
taking the poset of intervals induces a Tchebyshev triangulation. 

\begin{definition}
An {\em interval $[u,v]$} in a partially ordered set $P$ is the set of
all elements $w\in P$ satisfying $u\leq w\leq v$.    
For a finite partially ordered set $P$ we define the {\em poset $I(P)$
  of the intervals of $P$} as the set of all intervals $[u,v]\subseteq
  P$, ordered by inclusion.
\end{definition}  

We may identify the singleton intervals $[u,u]$ in $I(P)$ with the
elements of $P$. This subset of elements forms an antichain in $I(P)$,
however, under this identification, the order complex of $I(P)$ looks
like a triangulation of the order complex of $P$, see
Figures~\ref{fig:poset}  and \ref{fig:posett}. Figure~\ref{fig:poset}
shows a partially ordered set and its order complex. The poset of its
intervals and the order complex thereof may be seen in
Figure~\ref{fig:posett}.  

\begin{figure}[h]
  \input{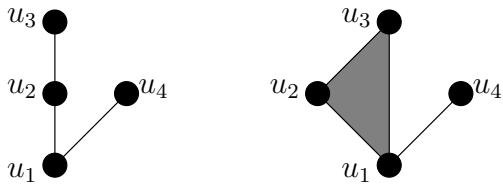}
\caption{A partially ordered set $P$ and its order complex $\triangle(P)$}  
\label{fig:poset}
\end{figure}  

In Figure~\ref{fig:posett} we marked the vertices of the order complex
associated to non-singleton intervals with white circles.

\begin{figure}[h]
  \input{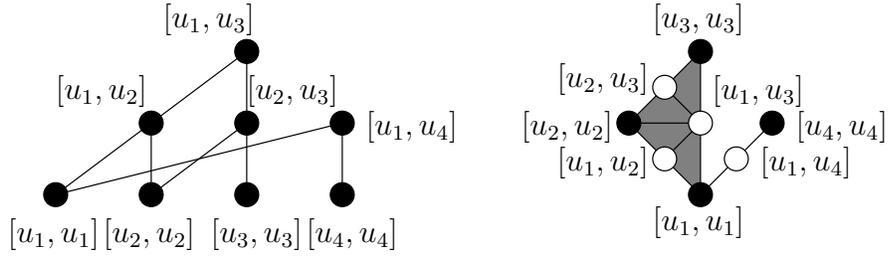}
\caption{The poset $I(P)$ of intervals of $P$ and its order complex}  
\label{fig:posett}
\end{figure}  

The following result is a generalization of~\cite[Remark 10]{Jojic}, and
an equivalent restatement of Walker's result~\cite[Theorem 4.1]{Walker}.

\begin{theorem}
\label{thm:it}
For any finite partially ordered set $P$ the order complex
$\triangle(I(P))$ of its poset of intervals is isomorphic to a
Tchebyshev triangulation of $\triangle(P)$ as follows. For each $u\in
P$ we identify the vertex $[u,u]\in \triangle(I(P))$ with the vertex $u\in
\triangle(P)$ and for each nonsingleton interval $[u,v]\in I(P)$ we
identify the vertex $[u,v]\in \triangle(I(P))$ with the midpoint of the
edge $\{[u,u],[v,v]\}$. We number the midpoints $[u_1,v_1], [u_2,v_2], \ldots$  
in such an order that $i<j$ holds whenever the interval $[u_i,v_i]$
contains the interval $[u_j,v_j]$.
\end{theorem}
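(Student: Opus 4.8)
The plan is to follow the defining subdivision sequence $\triangle_0=\triangle(P),\triangle_1,\dots,\triangle_N$ (with $N=f_1(\triangle(P))$) directly and to prove by induction on $i$ an explicit combinatorial description of each intermediate complex $\triangle_i$ under the vertex identification in the statement. First I would record the bookkeeping: every edge of $\triangle(P)$ is a pair $\{u,v\}$ of comparable elements, i.e.\ it corresponds to a unique nonsingleton interval $[u,v]$, so identifying $[u,u]$ with $u$ and each nonsingleton $[u,v]$ with the midpoint of $\{u,v\}$ sets up a bijection between the vertices of $\triangle(I(P))$ and those of $T(\triangle(P))$. Moreover ``larger interval first'' is a genuine linear order on the midpoints, since it is a linear extension of reverse inclusion and each nonsingleton interval occurs exactly once.

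The heart of the argument is the inductive claim that a set $S$ of intervals (singletons together with midpoints among $J_1,\dots,J_i$) is a face of $\triangle_i$ if and only if it is \emph{$i$-good}, by which I mean: (G1) the nonsingleton intervals of $S$ are nested; (G2) every singleton of $S$ is contained in the smallest nonsingleton of $S$ (equivalently in all of them); and (G3) any two distinct singletons $[w,w],[w',w']\in S$ are comparable and the interval they span is not yet subdivided, i.e.\ has index $>i$. The base case $i=0$ merely says that the faces of $\triangle(P)$ are the chains of singletons. For the inductive step I would analyze the stellar subdivision of $e_i=\{u_i,v_i\}$ (whose midpoint is $J_i=[u_i,v_i]$) via $\triangle_i=\{F\in\triangle_{i-1}:e_i\not\subseteq F\}\cup\{\sigma'\cup\tau:\sigma'\in L_i,\ \tau\in\link_{\triangle_{i-1}}(e_i)\}$ and check three things: that the retained faces are exactly the $i$-good sets avoiding $J_i$; that the new faces containing the midpoint $J_i$ are exactly the $i$-good sets containing $J_i$; and that in each such set $J_i$ is forced to be the smallest nonsingleton present. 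Specializing to $i=N$ then finishes the proof: once every nonsingleton interval is subdivided, (G3) forbids two distinct singletons, so an $N$-good set is a nested family of nonsingletons together with at most one singleton lying inside all of them, which is precisely a chain of $I(P)$; hence $\triangle_N=\triangle(I(P))$.

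The step I expect to be the main obstacle is the control of $\link_{\triangle_{i-1}}(e_i)$, and this is exactly where the prescribed ordering is indispensable. The key sublemma is that if $\tau\cup\{u_i,v_i\}$ is $(i-1)$-good, then every singleton of $\tau$ lies inside $J_i$ and every nonsingleton of $\tau$ strictly contains $J_i$: indeed a singleton $z<u_i$ (or $z>v_i$) would force the spanning interval $[z,v_i]$ (resp.\ $[u_i,z]$) to strictly contain $J_i$, hence by ``larger interval first'' to have index $<i$ and be already subdivided, contradicting (G3); and a nonsingleton of $\tau$ contains both $u_i$ and $v_i$ by (G2), hence contains $J_i$, and cannot equal it, so strictly contains it. Granting this sublemma, the stellar subdivision simply splits each face through $J_i$ into the admissible pieces $\{u_i,J_i\}\cup\tau$, $\{J_i,v_i\}\cup\tau$, $\{J_i\}\cup\tau$, and a routine matching of the goodness conditions before and after completes the induction. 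I would also remark, as a consistency check, that Theorem~\ref{thm:Hetyei-Nevo} guarantees the resulting $f$-vector is the expected Tchebyshev one, and that restricting attention to a single maximal chain of $P$ recovers the classical fact that $\triangle(I(P))$ subdivides $\triangle(P)$.
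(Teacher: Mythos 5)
Your proposal is correct and follows essentially the same route as the paper: an induction over the subdivision stages maintaining an explicit description of each intermediate complex, where your conditions (G1)--(G3) are exactly the complement of the paper's list of minimal nonfaces of a flag complex, and your key sublemma about $\link_{\triangle_{i-1}}(e_i)$ (singletons of $\tau$ inside $J_i$, nonsingletons strictly containing it, forced by the ``larger interval first'' ordering) is precisely the point the paper's proof turns on. The only difference is presentational: you describe the faces directly while the paper describes the minimal nonfaces, and you spell out the stellar-subdivision bookkeeping in more detail.
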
  
\begin{proof}
We illustrate the Tchebyshev triangulation process with the poset shown
in Figure~\ref{fig:posett}. We list its nonsingleton intervals in the
following order:
$[u_1,u_3]$, $[u_1,u_2]$, $[u_2,u_3]$, $[u_1,u_4]$. Figure~\ref{fig:posettpart}
shows the stage of the process when we already added $[u_1,u_3]$ and
$[u_1,u_2]$ but none of the remaining nonsingleton intervals.
\begin{figure}[h]
  \input{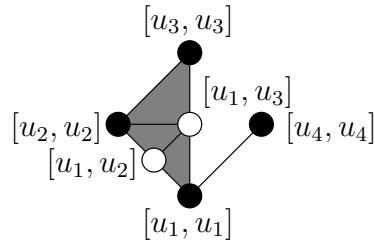}
\caption{The second step of the Tchebyshev triangulation process}  
\label{fig:posettpart}
\end{figure}
The following statement may be shown by induction on the number of stages
in the process: in each stage, the resulting complex is a flag complex,
whose minimal nonfaces are the following:
\begin{enumerate}
\item Pairs of singletons $\{[u,u],[v,v]\}$ such that $u$ and $v$ are
  not comparable in $P$.
\item Pairs of singletons $\{[u,u],[v,v]\}$ such that $u<v$ holds in
  $P$, but the interval $[u,v]$ has already been added to the triangulation.
\item Pairs of intervals from $I(P)$ such that neither one contains the other.
\end{enumerate}  
In each stage of the process, the nonsingleton interval $[u,v]$
added is the first midpoint of any edge whose endpoints are contained in
the interval $[u,v]$ of $P$. At the beginning of the stage the
restriction of the current complex to intervals contained in  $[u,v]$
only contains singleton intervals, and it is isomorphic to the order
complex of $[u,v]$. Subdividing the edge $\{[u,u],[v,v]\}$ and
all faces containing this edge results in a complex where both $[u,u]$
and $[v,v]$ can not appear in the same face any more, each such face is
replaced with $2$ faces: one containing $\{[u,u], [u,v]\}$  the other
containing $\{[v,v], [u,v]\}$. All intervals $[u',v']$ containing
$[u,v]$ have already been added in a previous stage, and now we add the
edge $\{[u,v],[u',v']\}$. The cumulative effect of all these changes is
that we obtain a new flag complex satisfying the listed criteria. 
\end{proof}
\begin{remark}
{\em Walker's proof is a direct geometric argument. The proof above uses
  the more general result stated in~\cite[Theorem 3.3]{Hetyei-Nevo}. It also
  directly implies the face counting formula that holds for all
  Tchebyshev transforms.}
\end{remark}

When $P$ is a graded poset then $[u',v']$ covers $[u,v]$ in $I(P)$
exactly when the rank function $\rho$ of $P$ satisfies 
$\rho(v')-\rho(u')=\rho(v)-\rho(u)+1$. Hence we may define the following
graded variant of the operation $P\mapsto I(P)$.

\begin{definition}
For a graded poset $P$ we define its {\em graded poset of intervals
  $\grI(P)$} as the poset of all intervals of $P$, {\em including the
    empty set}, ordered by inclusion.
\end{definition}  

\begin{figure}[h]
  \input{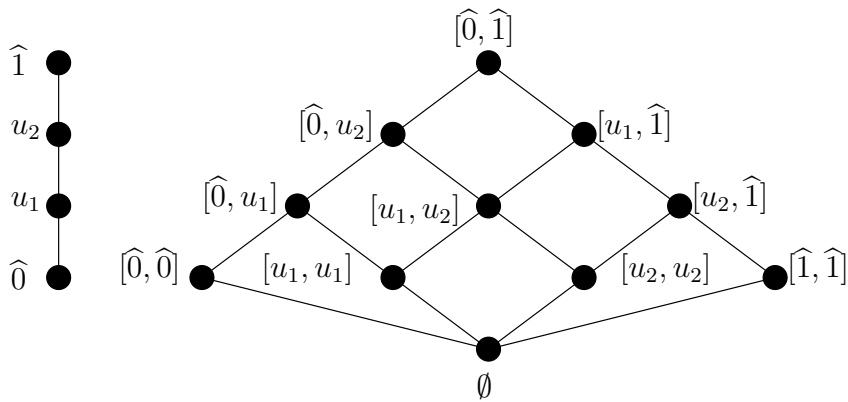}
\caption{The graded poset of intervals of a chain}  
\label{fig:gradedi}
\end{figure}

\begin{remark}
{\em Figure~\ref{fig:gradedi} represents the graded poset of intervals
  of a chain of rank $3$. It is worth comparing this illustration
  with~\cite[Figure 2]{Hetyei-mfp} where the Tchebyshev transform of a
  chain of rank $3$ is represented. The two posets are not isomorphic,
  not even after taking the dual of the Tchebyshev transform to make the
number of elements at the same rank equal.}
\end{remark}  

The following statement is straightforward.
\begin{proposition}
If $P$ is a graded poset of rank $n$ with rank function $\rho$ then
$\grI(P)$ is a graded poset of rank $n+1$, in which the rank of a
nonempty interval $[u,v]$ is $\rho(v)-\rho(u)+1$.
\end{proposition}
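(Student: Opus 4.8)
The plan is to verify directly the three requirements in the definition of a graded poset recalled in the Preliminaries: a unique minimum, a unique maximum, and a rank function that starts at $0$ and increases by one across each cover. First I would locate the extremal elements. Every interval, as well as $\emptyset$, is a subset of $P=[\0,\1]$, so $[\0,\1]$ contains all elements of $\grI(P)$ and is its unique maximum, while $\emptyset$ is contained in every interval and is its unique minimum. I would then propose the rank function $\widehat\rho$ given by $\widehat\rho(\emptyset)=0$ and $\widehat\rho([u,v])=\rho(v)-\rho(u)+1$ for nonempty $[u,v]$, so that the task reduces to checking that $\widehat\rho$ rises by exactly one on every covering relation.

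The covers split naturally into two families. For covers out of the minimum, note that any nonempty interval $[u,v]$ contains the singleton $[u,u]$, so if $[u,v]$ is not itself a singleton then $[u,u]$ lies strictly between $\emptyset$ and $[u,v]$; hence $\emptyset$ is covered precisely by the singletons $[u,u]$, and indeed $\widehat\rho([u,u])=1=\widehat\rho(\emptyset)+1$. For covers among nonempty intervals, observe that the nonempty intervals of $\grI(P)$ form a subposet isomorphic to $I(P)$, and since $\emptyset$ sits below all of them, adjoining it creates no new covers within this subposet. I may therefore invoke the covering characterization recalled just before the definition of the graded poset of intervals: $[u',v']$ covers $[u,v]$ exactly when $\rho(v')-\rho(u')=\rho(v)-\rho(u)+1$, which is precisely the statement that $\widehat\rho([u',v'])=\widehat\rho([u,v])+1$. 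Combining the two families shows $\widehat\rho$ is a genuine rank function, so $\grI(P)$ is graded, and its rank is $\widehat\rho([\0,\1])=\rho(\1)-\rho(\0)+1=n+1$.

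I do not expect a genuine obstacle; the proposition is a bookkeeping verification, as its one-line announcement suggests. The only points deserving care are the role of the empty set --- confirming both that it is the unique minimum and that it produces no covers other than $\emptyset\lessdot[u,u]$ --- and, should one wish to avoid citing the covering characterization for $I(P)$, a short independent argument for it: a cover of $[u,v]$ cannot strictly enlarge both endpoints at once, since an interval such as $[u,v']$ would then lie strictly in between, so each cover alters exactly one endpoint by a single covering step in $P$, which raises $\rho(v)-\rho(u)$ by one.
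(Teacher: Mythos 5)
Your proof is correct, and it follows exactly the route the paper intends: the paper offers no written proof (it declares the proposition straightforward), but the covering characterization you invoke is precisely the sentence stated immediately before the definition of $\grI(P)$, and your handling of the adjoined element $\emptyset$ (unique minimum, covered exactly by the singletons, creating no new covers among nonempty intervals) supplies the only details worth checking.
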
  

In analogy to Theorem~\ref{thm:tpgraded} we have the following result.

\begin{proposition}
\label{prop:ipgraded}
Let $P$ be a graded poset and $\grI(P)$ its graded poset of intervals.
Then the order complex $\triangle(\grI(P)- \{\emptyset, [\0,\1]\})$
is a Tchebyshev triangulation of the suspension of $\triangle(P-\{\0,\1\})$. 
\end{proposition}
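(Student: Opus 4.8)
The plan is to reduce the statement to Theorem~\ref{thm:it}, which already identifies $\triangle(I(P))$ as a Tchebyshev triangulation of $\triangle(P)$, and then to pass to the suspension by a single vertex deletion. First I observe that $\grI(P)=I(P)\cup\{\emptyset\}$, with $\emptyset$ adjoined as a new minimum, so that
$$
\grI(P)-\{\emptyset,[\0,\1]\}=I(P)-\{[\0,\1]\},
$$
and that $[\0,\1]$ is the \emph{maximum} of $I(P)$, since it contains every interval. Because deleting the top element of a poset $Q$ removes from $\triangle(Q)$ exactly the chains through that element, we obtain $\triangle(\grI(P)-\{\emptyset,[\0,\1]\})=\mathrm{del}_{\triangle(I(P))}([\0,\1])$, the subcomplex of $\triangle(I(P))$ consisting of the faces that avoid the vertex $[\0,\1]$.

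Next I identify the target complex. A face of $\triangle(P)$ is a chain of $P$, that is, a chain $\sigma$ of $P-\{\0,\1\}$ possibly augmented by $\0$ at the bottom and/or by $\1$ at the top; discarding those faces that contain both $\0$ and $\1$ leaves exactly the faces of the suspension $\triangle(P-\{\0,\1\})*\partial(\triangle^1)$, with $\0$ and $\1$ serving as the two suspension points. Thus the suspension is the deletion $\mathrm{del}_{\triangle(P)}(\{\0,\1\})$ of the edge $\{\0,\1\}$ from $\triangle(P)$. Under the identification of Theorem~\ref{thm:it} the midpoint of this edge is precisely $[\0,\1]$, and since $[\0,\1]$ contains every interval it is the \emph{first} midpoint inserted in the prescribed reverse-inclusion order.

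The heart of the proof is therefore the following locality statement, which I would isolate as a lemma: if an edge $e$ of a complex $\triangle$ is subdivided first in a Tchebyshev triangulation $T(\triangle)$, with midpoint $w$, then $\mathrm{del}_{T(\triangle)}(w)$ is a Tchebyshev triangulation of $\mathrm{del}_{\triangle}(e)$. The point is that the opening stellar subdivision replaces the star of $e$ by faces through $w$ and leaves $\mathrm{del}_{\triangle}(e)$ untouched, while every later subdivision of an edge $e'\neq e$ (note $e'\not\supseteq e$, as both are edges) replaces a face $\phi$ by faces $\sigma'\cup\tau$ with $\tau\subseteq\phi$; hence no subdivision ever inserts $w$ into a face that did not already contain it. Consequently the faces of $T(\triangle)$ avoiding $w$ are exactly those produced by running the same subdivision process on the faces of $\triangle$ that avoid $e$, i.e.\ on $\mathrm{del}_{\triangle}(e)$. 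Applying this with $\triangle=\triangle(P)$, $e=\{\0,\1\}$ and $w=[\0,\1]$, and combining the two identifications above, gives the claim; the ordering induced on the remaining midpoints still respects reverse inclusion and hence is a legitimate Tchebyshev triangulation order (and by Theorem~\ref{thm:Hetyei-Nevo} the choice of order does not affect the resulting face numbers), exhibiting $\triangle(\grI(P)-\{\emptyset,[\0,\1]\})$ as a Tchebyshev triangulation of the suspension.

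I expect the locality lemma to be the main obstacle: one must check carefully that subdividing the other edges never reintroduces the deleted midpoint $w$, and never creates a face of $\mathrm{del}_{T(\triangle)}(w)$ that fails to descend from $\mathrm{del}_{\triangle}(e)$. A self-contained alternative, bypassing the lemma, is to re-run the inductive argument of Theorem~\ref{thm:it} verbatim while simply never adjoining the midpoint $[\0,\1]$; then the pair $\{[\0,\0],[\1,\1]\}$ remains a minimal non-face at every stage, which is exactly the assertion that the two suspension points $\0$ and $\1$ stay non-adjacent, confirming that the resulting complex is the Tchebyshev triangulation of the suspension.
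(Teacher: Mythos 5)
Your proposal is correct and follows essentially the same route as the paper: both reduce to Theorem~\ref{thm:it}, view $\triangle(P)$ as the join of $\triangle(P-\{\0,\1\})$ with the edge $\{\0,\1\}$, and observe that deleting the midpoint $[\0,\1]$ from the triangulation leaves precisely a Tchebyshev triangulation of the faces avoiding that edge, i.e.\ of the suspension. Your explicit ``locality lemma'' merely spells out the step the paper asserts in one sentence, namely that later stellar subdivisions never reintroduce the deleted midpoint.
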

\begin{proof}
By Theorem~\ref{thm:it}, the order complex
$\triangle(\grI(P)-\{\emptyset\})$ is a Tchebyshev triangulation of
$\triangle(P)$. The order complex $\triangle (P)$ is the join of
$\triangle(P-\{\0,\1\})$ with the one-dimensional simplex on the vertex
set $\{\0,\1\}$. Performing the Tchebyshev triangulation results in
subdividing every simplex containing the edge $\{\0,\1\}$ into two
simplices. The removal of the midpoint $[\0,\1]$ leaves us exactly with
those faces which are contained in a face of $\triangle (P)$ that does not
contain the edge $\{\0,\1\}$. Hence we obtain a Tchebyshev triangulation
of a suspension of $\triangle(P-\{\0,\1\})$: the suspending vertices are
$\0$ and $\1$.
\end{proof}  

We conclude this section with the following observations regarding the
{\em direct product} of two graded posets. Recall that the direct
product $P\times Q$ of two graded posets $P$ and $Q$ is defined as the
set of all ordered pairs $(u,v)$ where $u\in P$ and $v\in Q$, subject to
the partial order $(u_1,v_1)\leq (u_2,v_2)$ holding exactly when
$u_1\leq u_2$ holds in $P$ and $v_1\leq v_2$ holds in $Q$.

\begin{proposition}
\label{prop:cprod}  
If $P$ and $Q$ are graded posets then $I(P\times Q)$ is isomorphic to
$I(P)\times I(Q)$.
\end{proposition}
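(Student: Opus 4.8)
The plan is to show that the natural correspondence ``an interval of a product poset is the product of intervals'' furnishes the desired order isomorphism. First I would record the elementary observation that for any $(u_1,v_1)\le (u_2,v_2)$ in $P\times Q$ one has, as subsets of $P\times Q$,
$$
[(u_1,v_1),(u_2,v_2)] = [u_1,u_2]\times[v_1,v_2],
$$
since $(u_1,v_1)\le (x,y)\le (u_2,v_2)$ is equivalent to $u_1\le x\le u_2$ together with $v_1\le y\le v_2$. In particular, by the definition of the product order, $(u_1,v_1)\le(u_2,v_2)$ holds if and only if $u_1\le u_2$ in $P$ and $v_1\le v_2$ in $Q$, so $[u_1,u_2]\in I(P)$ and $[v_1,v_2]\in I(Q)$ exactly when the left-hand interval is defined.

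Next I would define the map $\phi\colon I(P)\times I(Q)\to I(P\times Q)$ by $\phi([u_1,u_2],[v_1,v_2]) = [(u_1,v_1),(u_2,v_2)]$ and check that it is a bijection. It is well defined and surjective by the displayed identity. For injectivity I would use that an interval, viewed as a subset of $P\times Q$, determines its endpoints: the minimum and maximum of the set $[u_1,u_2]\times[v_1,v_2]$ are $(u_1,v_1)$ and $(u_2,v_2)$, which recover $u_1,u_2,v_1,v_2$ and hence the pair of factor intervals.

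Finally I would verify that $\phi$ and its inverse are order preserving, that is, that inclusion in $I(P\times Q)$ corresponds to componentwise inclusion in $I(P)\times I(Q)$. One direction is immediate, as a product of subsets is contained in the product of larger subsets. For the converse I would invoke the standard fact that if $A_1\times B_1\subseteq A_2\times B_2$ with $A_1$ and $B_1$ nonempty, then $A_1\subseteq A_2$ and $B_1\subseteq B_2$: fixing $b\in B_1$ and letting the first coordinate range over $A_1$ gives $A_1\subseteq A_2$, and symmetrically for $B_1$. Since every interval in the sense of $I(P)$ is nonempty, this applies verbatim.

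The only substantive step is the identification of an interval of $P\times Q$ with a product of intervals; everything else is elementary bookkeeping about Cartesian products of nonempty sets. I expect no genuine obstacle, only the need to invoke nonemptiness of intervals so that inclusion of products faithfully reflects inclusion of the factors.
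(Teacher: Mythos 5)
Your argument is correct and is exactly the ``straightforward verification'' that the paper leaves to the reader: the identity $[(u_1,v_1),(u_2,v_2)]=[u_1,u_2]\times[v_1,v_2]$ gives the bijection, and nonemptiness of intervals ensures that inclusion of product intervals reflects componentwise inclusion. Nothing is missing.
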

The straightforward verification is left to the reader.
Proposition~\ref{prop:cprod} may be immediately generalized to the 
graded poset of intervals using the   {\em diamond product} introduced
by Ehrenborg and Readdy in~\cite{Ehrenborg-Readdy-Tch}.
\begin{definition}
Given two graded posets $P$ and $Q$, their {\em diamond product}
$P\diamond Q$ is defined as $(P-\{\0_P\})\times (Q-\{\0_Q\})\cup \0$. In
  other words, to obtain the diamond product we remove the unique
  minimum elements of $P$ and $Q$ respectively, we take the direct
  product of the resulting posets and we add a new unique minimum element. 
\end{definition}
\begin{corollary}
\label{cor:cprod}    
If $P$ and $Q$ are graded posets then $\grI(P\times Q)$ is isomorphic to
$\grI(P)\diamond \grI(Q)$.
\end{corollary}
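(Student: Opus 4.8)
The plan is to reduce the statement to Proposition~\ref{prop:cprod}, the only extra work being a careful accounting of how the empty interval enters each construction. First I would record the elementary observation that for any graded poset $R$ the graded poset of intervals $\grI(R)$ is obtained from the ordinary poset of intervals $I(R)$ by adjoining the empty interval as a new minimum element; that is, $\emptyset=\0_{\grI(R)}$ and $\grI(R)-\{\emptyset\}=I(R)$, the empty set sitting strictly below every nonempty interval.

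Next I would unwind the definition of the diamond product in the light of this observation. Since the unique minimum of $\grI(P)$ is the empty interval, we have $\grI(P)-\{\0\}=I(P)$, and likewise $\grI(Q)-\{\0\}=I(Q)$. Therefore
$$
\grI(P)\diamond\grI(Q)=\bigl(I(P)\times I(Q)\bigr)\cup\{\0\},
$$
where $\0$ denotes a freshly adjoined minimum lying below every pair of nonempty intervals.

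Now I would invoke Proposition~\ref{prop:cprod}, which furnishes an order isomorphism $I(P)\times I(Q)\cong I(P\times Q)$ carrying a pair $([u_1,v_1],[u_2,v_2])$ to the interval $[(u_1,u_2),(v_1,v_2)]=[u_1,v_1]\times[u_2,v_2]$ of $P\times Q$. On the other side, $\grI(P\times Q)=I(P\times Q)\cup\{\emptyset\}$, with the empty interval of $P\times Q$ as its minimum. Sending the adjoined minimum $\0$ of the diamond product to this empty interval therefore extends the isomorphism of Proposition~\ref{prop:cprod} to a bijection $\grI(P)\diamond\grI(Q)\to\grI(P\times Q)$.

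The one point requiring genuine verification, and the only place where the argument is not purely formal, is that this extension is order preserving, i.e.\ that identifying the adjoined minimum with the empty interval creates and destroys no covering relations. This reduces to two facts: there is exactly one empty interval in $P\times Q$ and it lies strictly below every nonempty interval, while symmetrically the adjoined $\0$ of the diamond product lies strictly below every element of $I(P)\times I(Q)$. Both hold by definition, so the bijection is an isomorphism. I expect this minimum-element bookkeeping to be the main (and essentially the only) obstacle, since the substantive combinatorics — that an interval of $P\times Q$ factors as a product of an interval of $P$ and an interval of $Q$ compatibly with inclusion — has already been absorbed into Proposition~\ref{prop:cprod}.
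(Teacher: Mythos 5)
Your argument is correct and follows exactly the route the paper intends: the paper offers no written proof, merely asserting that Proposition~\ref{prop:cprod} ``may be immediately generalized'' via the diamond product, and your write-up is precisely the bookkeeping that makes this immediate generalization rigorous (in particular, correctly identifying that the single empty interval of $P\times Q$ must correspond to the single adjoined minimum of the diamond product, which is why the diamond product rather than the direct product appears). Nothing is missing.
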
  
A special case of Corollary~\ref{cor:cprod} may be found
in~\cite[Proposition 4 (iv)]{Jojic}.

\begin{remark}
{\em It is worth comparing Corollary~\ref{cor:cprod} above with
  ~\cite[Theorem 9.1]{Ehrenborg-Readdy-Tch} where it is stated that the
  Tchebyshev transform of the Cartesian product of two posets is the
  diamond product of their Tchebyshev transforms.}
\end{remark}

\section{The type $B$ Coxeter complex as a Tchebyshev triangulation}
\label{sec:typeb}

After introducing $X:=K^+$ and $Y:=[1,n]-K^-$, we may rephrase
Corollary~\ref{cor:facesintervals} as follows.   
\begin{corollary}
We may label each facet of the type $B$ permutohedron
$\operatorname{Perm}(B_{n})$ with a nonempty interval $[X,Y]$ of the
Boolean algebra $P([1,n])$ that is different from $P([1,n])=[\emptyset, [1,n]]$. 
The set $\{[X_1,Y_1], [X_2,Y_2], \ldots, [X_m,Y_m]\}$ labels a
collection of facets with a nonempty intersection if and only if the
intervals form an increasing chain in $\grI(P([1,n]))-\{\emptyset,
[\emptyset, [1,n]]\}$.   
\end{corollary}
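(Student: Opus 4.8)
The plan is to recognize this corollary as nothing more than a change of variables applied to Corollary~\ref{cor:facesintervals}, together with a dictionary between the nested-inclusion condition on the label pairs and the chain condition in the poset of intervals.

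First I would set up the substitution $X=K^+$ and $Y=[1,n]-K^-$ and check that it is a bijection between the valid facet labels $(K^+,K^-)$ and the nonempty intervals $[X,Y]$ of $P([1,n])$ distinct from $[\emptyset,[1,n]]$. The constraint $K^+\subseteq[1,n]-K^-$ becomes exactly $X\subseteq Y$, that is, the assertion that $[X,Y]$ is a nonempty interval; the requirement that $K^+$ and $K^-$ not both be empty becomes the requirement that we not have simultaneously $X=\emptyset$ and $Y=[1,n]$, i.e.\ that $[X,Y]\neq[\emptyset,[1,n]]=P([1,n])$. Since $K^-=[1,n]-Y$ recovers $K^-$ from $Y$, the map is a bijection. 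This identifies the facet labels with precisely the elements of $\grI(P([1,n]))-\{\emptyset,[\emptyset,[1,n]]\}$, the empty interval being the omitted minimum and the full interval $[\emptyset,[1,n]]$ the omitted maximum.

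Next I would translate the intersection condition. Under the substitution the displayed chain of inclusions in Corollary~\ref{cor:facesintervals} reads
$$
X_1\subseteq X_2\subseteq\cdots\subseteq X_m\subseteq Y_m\subseteq Y_{m-1}\subseteq\cdots\subseteq Y_1,
$$
so I must show this is equivalent to the set $\{[X_1,Y_1],\ldots,[X_m,Y_m]\}$ being a chain in $\grI(P([1,n]))$. The key elementary fact is that for nonempty intervals the inclusion $[X,Y]\subseteq[X',Y']$ of subsets of $P([1,n])$ holds precisely when $X'\subseteq X$ and $Y\subseteq Y'$; this follows by testing the endpoints $X,Y\in[X,Y]$ in one direction and by transitivity in the other. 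Applying this criterion, the displayed condition says exactly that $[X_{i+1},Y_{i+1}]\subseteq[X_i,Y_i]$ for each $i$, i.e.\ that the labels form the chain $[X_1,Y_1]\supseteq\cdots\supseteq[X_m,Y_m]$. Read the other way, an arbitrary set of labels is totally ordered by inclusion if and only if, after listing the intervals in increasing order of inclusion, their endpoints satisfy the displayed nesting.

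The only point demanding care—the ``hard part,'' such as it is—is keeping track of the two order reversals: first, the complementation $Y=[1,n]-K^-$ reverses the inclusion order on the second coordinate, and second, passing from a pair of endpoints to the interval they span reverses inclusion again, since a larger interval has a smaller left endpoint and a larger right endpoint. These two reversals must be shown to be compatible so that the nested condition of Corollary~\ref{cor:facesintervals} matches the chain condition in $\grI(P([1,n]))$ with the correct orientation. Once the endpoint criterion above is in hand this is routine, and the corollary follows by combining the label bijection with the condition translation.
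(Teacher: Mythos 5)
Your proposal is correct and matches the paper's approach exactly: the paper treats this corollary as an immediate rephrasing of Corollary~\ref{cor:facesintervals} via the very substitution $X:=K^+$, $Y:=[1,n]-K^-$ that you use, and your verification of the label bijection and of the equivalence between the nested-endpoint condition and the chain condition in $\grI(P([1,n]))$ (via the endpoint criterion $[X,Y]\subseteq[X',Y']$ iff $X'\subseteq X$ and $Y\subseteq Y'$) supplies precisely the routine details the paper leaves implicit. Your attention to the two order reversals is a correct and welcome elaboration of what the paper takes for granted.
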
  

The representation of each face of
$\operatorname{Perm}(B_{n})$ as an intersection of facets is
unique, hence we obtain the following result.

\begin{proposition}
The dual of $\operatorname{Perm}(B_{n})$ is a simplicial polytope whose
boundary complex is combinatorially equivalent to the order complex
$\triangle(\grI(P([1,n]))-\{\emptyset, [\emptyset, [1,n]]\})$.
\label{prop:tbc}
\end{proposition}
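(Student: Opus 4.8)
The plan is to derive the statement from the standard combinatorial duality between a simple polytope and its simplicial polar dual, feeding in the facet-intersection characterization already supplied by the preceding Corollary. Since $\operatorname{Perm}(B_{n})$ was noted above to be a simple polytope, its polar dual $Q^{*}$ is a simplicial polytope; in particular $\partial Q^{*}$ is a simplicial complex and each of its faces is uniquely determined by its vertex set. This gives the first assertion of the Proposition at once, and reduces the second assertion to identifying the faces of $\partial Q^{*}$ with chains of the truncated poset.

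Next I would invoke the face-lattice anti-isomorphism between $\operatorname{Perm}(B_{n})$ and $Q^{*}$. It carries the facets of $\operatorname{Perm}(B_{n})$ bijectively to the vertices of $Q^{*}$, and more generally sends a proper face $G=F_{i_{1}}\cap\cdots\cap F_{i_{m}}$, written as the intersection of the facets containing it, to the face of $Q^{*}$ spanned by the corresponding vertices. Consequently a set of vertices of $Q^{*}$ spans a face of $\partial Q^{*}$ if and only if the corresponding facets of $\operatorname{Perm}(B_{n})$ have nonempty common intersection. The uniqueness of the representation of each face as the intersection of the facets containing it (the Remark preceding the statement) guarantees that this correspondence, between faces of $\partial Q^{*}$ and sets of facets meeting in a common face, is a genuine bijection, hence an isomorphism of simplicial complexes once the vertex identification is fixed.

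Then I would apply the Corollary directly. It labels the facets of $\operatorname{Perm}(B_{n})$ by the nonempty intervals $[X,Y]$ of $P([1,n])$ other than $[\emptyset,[1,n]]$; these are exactly the elements of $\grI(P([1,n]))$ that survive after deleting the minimum $\emptyset$ and the maximum $[\emptyset,[1,n]]$, so the vertices of $\partial Q^{*}$ may be identified with the vertices of $\triangle(\grI(P([1,n]))-\{\emptyset,[\emptyset,[1,n]]\})$. The same Corollary asserts that a collection of facets meets in a nonempty face precisely when its labels form an increasing chain in this truncated poset. Since the faces of an order complex are by definition the chains of the underlying poset, the face set of $\partial Q^{*}$ coincides with the face set of the order complex under this identification, which is exactly the claimed combinatorial equivalence.

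The genuinely nontrivial geometric content—determining which families of facets of $\operatorname{Perm}(B_{n})$ share a common point—has already been absorbed into the Corollary, so the remaining work is only the formal translation above; I do not expect a substantive obstacle. The one point warranting care is matching the two removals on the two sides: one should verify that deleting $\emptyset$ strips off exactly the adjoined minimum of the graded poset of intervals and that deleting $[\emptyset,[1,n]]$ strips off exactly its maximum, so that the surviving poset elements are in bijection with the facet labels and no spurious vertex is introduced on either side.
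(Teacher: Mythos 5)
Your proposal is correct and follows exactly the route the paper takes: the paper derives the Proposition in one sentence from the preceding Corollary together with the uniqueness of the representation of each face as an intersection of facets, which is precisely the polar-duality translation you spell out. The only difference is that you make explicit the standard face-lattice anti-isomorphism argument that the paper leaves implicit.
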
  

As a consequence of this statement and of
Proposition~\ref{prop:ipgraded}, we obtain the following result. 
\begin{corollary}
\label{cor:typeb-tt}  
The dual of $\operatorname{Perm}(B_{n})$ is a simplicial polytope whose
boundary complex is combinatorially equivalent to a Tchebyshev
triangulation of the suspension of $\triangle(P([1,n])-\{\emptyset, [1,n]\})$.
\end{corollary}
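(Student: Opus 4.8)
The plan is to obtain this corollary by directly composing the two preceding results, since each supplies exactly one of the two combinatorial equivalences required. First I would invoke Proposition~\ref{prop:tbc}, which already establishes that the dual of $\operatorname{Perm}(B_n)$ is a simplicial polytope and identifies its boundary complex, up to combinatorial equivalence, with the order complex $\triangle(\grI(P([1,n]))-\{\emptyset,[\emptyset,[1,n]]\})$. This settles the ``simplicial polytope'' part of the assertion and reduces the remaining task to rewriting this particular order complex as a Tchebyshev triangulation.

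The second step is to apply Proposition~\ref{prop:ipgraded} to the specific graded poset $P=P([1,n])$, the Boolean algebra of rank $n$. Here the essential bookkeeping is the dictionary between the abstract notation of that proposition and the present setting: the unique minimum of $P([1,n])$ is $\0=\emptyset$ and its unique maximum is $\1=[1,n]$, so the top interval $[\0,\1]$ of the graded poset of intervals is precisely $[\emptyset,[1,n]]$, and the pair $\{\emptyset,[\0,\1]\}$ removed in Proposition~\ref{prop:ipgraded} coincides with the pair $\{\emptyset,[\emptyset,[1,n]]\}$ appearing in Proposition~\ref{prop:tbc}. With this identification, Proposition~\ref{prop:ipgraded} asserts exactly that $\triangle(\grI(P([1,n]))-\{\emptyset,[\emptyset,[1,n]]\})$ is a Tchebyshev triangulation of the suspension of $\triangle(P([1,n])-\{\0,\1\})=\triangle(P([1,n])-\{\emptyset,[1,n]\})$.

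Chaining the two combinatorial equivalences then yields the statement, using that combinatorial equivalence of simplicial complexes is transitive. I do not expect a genuine obstacle: the entire content of the corollary is carried by Propositions~\ref{prop:tbc} and~\ref{prop:ipgraded}, and the only point requiring care is the notational translation in the second step, namely verifying that the two excised pairs of poset elements are the same, so that the two order complexes being compared are literally identical.
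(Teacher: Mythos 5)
Your proposal is correct and matches the paper's own derivation exactly: the paper states Corollary~\ref{cor:typeb-tt} precisely as a consequence of Proposition~\ref{prop:tbc} combined with Proposition~\ref{prop:ipgraded} applied to $P=P([1,n])$. The notational identification of $\{\emptyset,[\0,\1]\}$ with $\{\emptyset,[\emptyset,[1,n]]\}$ that you verify is indeed the only bookkeeping needed.
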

The order complex $\triangle(\grI(P([1,n]))-\{\emptyset, [\emptyset,
  [1,n]]\})$ has been studied by
Athanasiadis and Savvidou~\cite{Athanasiadis-Savvidou}.  It is worth
noting that the order complex $\triangle(P([1,n])-\{\emptyset, [1,n]\})$ 
is known to be combinatorially equivalent to the dual of the boundary
complex of the permutohedron $\operatorname{Perm}(A_{n-1})$. We may also
think of this complex as the barycentric subdivision of the boundary of an
$(n-1)$-dimensional simplex. 

\begin{figure}[h]
%export:70%  
  \input{typebdual_pdf.tex}
\caption{Half of the dual of $\operatorname{Perm}(B_{3})$}  
\label{fig:typebdual}
\end{figure}
Figure~\ref{fig:typebdual} represents ``half'' of the dual of
$\operatorname{Perm}(B_{3})$. The boundary of the triangle whose
vertices are labeled with singleton intervals $[\{i\},\{i\}]$ is shown
in bold. (In general, the reader should imagine the boundary of a
simplex, whose vertices are labeled with $[\{i\},\{i\}]$.) The vertices
of the barycentric subdivision of the boundary are marked with black
circles. These correspond to singleton intervals of the form $[X,X]$,
where $X$ is a subset of $[1,3]$. (In general, $X$ is a subset of $[1,n]$.)
The suspending vertex $\emptyset$ is marked with a black square. The
other suspending vertex $[1,3]$ (in general: $[1,n]$) is not shown in the
picture. One would need to make another picture showing the boundary of
the triangle with the suspending vertex, and ``glue'' the two pictures
along the boundary of the triangle. The midpoints of the edges are
marked with white circles. These are labeled with intervals $[X,Y]$ such
that $X$ is properly contained in $Y$. The edges arising when we take
the appropriate Tchebyshev triangulation are indicated with dashed
lines. Note that this part of the picture is different on the ``other
side'' of the dual of $\operatorname{Perm}(B_{3})$: on the side shown
the largest intervals labeling midpoints are of the form $[\emptyset,
  [1,3]-\{i\}]$ (in general $[\emptyset, [1,n]-\{i\}]$) whereas on the other
side the largest such intervals are of the form $[\{i\}, [1,3]]$ (in
general: $[\{i\}, [1,n]]$). We leave to the reader as a challenge to draw
the other side of the dual of $\operatorname{Perm}(B_{3})$.     

\begin{remark}
{\em By Proposition~\ref{prop:tbc}, the work of Anwar and
  Nazir~\cite{Anwar} implies that the $h$-polynomial  of the type
  $B$ Coxeter complex has only real roots. As a consequence of
  Corollary~\ref{cor:typeb-tt} we know that this is a Tchebyshev
  triangulation and we may compute its $F$-polynomial
  using~\eqref{eq:tposet}, and obtain that these 
polynomials have the same coefficients (up to sign) as the derivative
polynomials for secant. Taking the signs into account we obtain the
derivative polynomials for hyperbolic secant. For the Tchebyshev
transform of a Boolean 
algebra this was first observed in~\cite[Corollary 9.3]{Hetyei-mfp}, but
at the level of counting faces in the order complex of a graded poset
there is no difference between considering the operator $P\mapsto T(P)$
and the operator $P\mapsto \grI(P)$. It has been shown
in~\cite{Hetyei-tt} that the derivative 
polynomials for hyperbolic tangent and hyperbolic secant have interlaced
real roots in the interval $[-1,1]$.  As noted in the same paper, the
$F$-polynomial $F_\triangle(t)$ and the $h$-polynomial
$h_{\triangle}(t)$ of a $(d-1)$-dimensional simplicial complex
$\triangle$  are connected by the formula 
$$
(1-t)^d\cdot F_{\triangle}\left(\frac{1+t}{1-t}\right)=(1-t)^d\sum_{j=0}^d f_j
\left(\frac{t}{1-t}\right)^j=h_{\triangle}(t).$$
Hence the real-rootedness of the $h$-polynomial
of the type $B$ Coxeter complex is also a consequence of the fact that
the derivative polynomials for the hyperbolic secant have real roots.
}
\end{remark}

\section{Computing the flag $f$-vector of the graded poset of intervals}
\label{sec:flagf}

In this section we review how for any graded poset $P$, the flag
$f$-vector of its graded poset of intervals $\grI(P)$ may be obtained
from the flag $f$-vector of $P$ by a linear transformation. Such
formulas were first found by Joji\'{c}~\cite{Jojic}. At the end of the
section we will also present a more direct proof of his key formulas. By
``chain'' in this section we always mean a chain containing the unique minimum
element and the unique maximum element. This treatment is equivalent to
excluding both of these elements from all chains. 

\begin{definition}
Given a chain $\emptyset\subset [u_1,v_1]\subset [u_2,v_2]\subset \cdots
\subset[u_k,v_k]\subset [u_{k+1},v_{k+1}]=[\0,\1]$ in the graded poset
of intervals $\grI(P)$ of a graded poset $P$, we call the set
$$\{u_1,v_1,u_2,v_2,\ldots,u_{k+1},v_{k+1}\}$$
the {\em support} of the chain.
\end{definition}  

Obviously the support of a chain in $\grI(P)$ is a chain in $P$
containing the minimum element $\0$ and the maximum element $\1$.

The next statement expresses the number of chains in $\grI(P)$ having
the same support in terms of the {\em Pell numbers $P(n)$}. 
These numbers are given by the initial conditions $P(1)=1$
and $P(2)=2$ and by the recurrence $P(n)=2\cdot P(n-1)+P(n-2)$ for
$n\geq 3$. A detailed bibliography on the Pell numbers may be found at sequence
A000129 of~\cite{OEIS}.

\begin{proposition}
\label{prop:pell2}
Let $P$ be a graded poset and let $c: \0=z_0<z_1<\cdots<z_{m-1}<z_{m}=\1$ be
a chain in it. Then the number of chains
$\emptyset\subset [u_1,v_1]\subset [u_2,v_2]\subset \cdots
\subset[u_k,v_k]\subset [u_{k+1},v_{k+1}]=[\0,\1]$ whose support is $c$
is the sum $P(m)+P(m+1)$ of two adjacent Pell numbers. 
\end{proposition}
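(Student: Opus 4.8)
The plan is to translate the statement into a lattice-path count. Since the support is forced to equal the chain $c$, every interval occurring in the chain has both endpoints among $z_0,\ldots,z_m$; writing the interval $[z_a,z_b]$ as the pair $(a,b)$ with $0\le a\le b\le m$, a chain with support $c$ becomes a sequence of proper containments $(a_1,b_1)\subsetneq(a_2,b_2)\subsetneq\cdots\subsetneq(a_{k+1},b_{k+1})=(0,m)$ in which, by the rule for interval inclusion, the left coordinates weakly decrease and the right coordinates weakly increase, subject to $\bigcup_i\{a_i,b_i\}=\{0,1,\ldots,m\}$. First I would record the monotonicity constraints $a_1\ge a_2\ge\cdots\ge a_{k+1}=0$ and $b_1\le\cdots\le b_{k+1}=m$, so that a chain is determined by its set of intervals.

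The key structural observation I would establish next is that the smallest interval $(a_1,b_1)=(p,q)$ must satisfy $q\le p+1$. Indeed, any index strictly between $p$ and $q$ can be neither a left endpoint (all of which are $\le p$) nor a right endpoint (all of which are $\ge q$), so it could never enter the support, contradicting fullness. The same monotone-coverage argument then pins down the entire structure: the left endpoints must realize \emph{every} value in $\{0,1,\ldots,p\}$ and the right endpoints every value in $\{q,q+1,\ldots,m\}$, which forces the sequence $a_i$ to step down by exactly one whenever it changes, and $b_i$ to step up by exactly one whenever it changes. Since consecutive pairs are distinct, each step is one of: decrease $a$ by $1$, increase $b$ by $1$, or both simultaneously. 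Thus such chains are in bijection with lattice paths from $(p,q)$ to $(0,m)$ using unit West, North, and diagonal NorthWest steps, and the number of these is the Delannoy number $D(p,m-q)$ (paths with horizontal displacement $p$ and vertical displacement $m-q$ built from unit horizontal, vertical, and diagonal steps).

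I would then split according to the two allowed shapes of the bottom interval. When $(a_1,b_1)=(p,p)$ is a singleton ($0\le p\le m$) the count is $D(p,m-p)$, contributing $\sum_{p=0}^{m}D(p,m-p)$; when $(a_1,b_1)=(p,p+1)$ ($0\le p\le m-1$) the count is $D(p,m-p-1)$, contributing $\sum_{p=0}^{m-1}D(p,m-1-p)$. Both sums are antidiagonal sums of Delannoy numbers. The final ingredient is the identity $\sum_{i+j=n}D(i,j)=P(n+1)$, which I would derive from the Delannoy recurrence $D(i,j)=D(i-1,j)+D(i,j-1)+D(i-1,j-1)$: letting $S(n)$ denote the antidiagonal sum, the recurrence collapses to $S(n)=2S(n-1)+S(n-2)$ with $S(0)=1$ and $S(1)=2$, which is exactly the Pell recurrence, so $S(n)=P(n+1)$. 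Hence the singleton case contributes $P(m+1)$ and the two-element case contributes $P(m)$, and their sum is the claimed value $P(m)+P(m+1)$.

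The main obstacle is the structural step: carefully justifying that full support forces the bottom interval to have width at most one and forces both endpoint sequences to move in unit increments, so that the chains are genuinely in bijection with the West/North/NorthWest lattice paths rather than allowing skips. Once this bijection is secured, the passage to Pell numbers is a routine consequence of the antidiagonal Delannoy recurrence, and the two initial values $P(1)+P(2)=3$ (for $m=1$) can be checked directly against a hand enumeration as a sanity test.
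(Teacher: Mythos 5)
Your proof is correct, but it takes a genuinely different route from the paper's. The paper argues by induction on $m$, peeling off the \emph{top} of the chain: the largest interval below $[\0,\1]$ must be one of $[z_1,\1]$, $[\0,z_{m-1}]$ or $[z_1,z_{m-1}]$, and the three cases immediately give the recurrence $a_{m+1}=2a_m+a_{m-1}$ satisfied by $P(m)+P(m+1)$. You instead classify by the \emph{bottom} interval, prove the structural facts that full support forces the minimal interval $[z_p,z_q]$ to have $q\le p+1$ and forces both endpoint sequences to move in unit steps, and thereby exhibit a bijection between the chains and Delannoy paths from $(p,q)$ to $(0,m)$; the Pell numbers then enter through the antidiagonal-sum identity $\sum_{i+j=n}D(i,j)=P(n+1)$, which you reprove via the same $2$-and-$1$ recurrence. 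Both arguments are sound (your base cases $m=1,2$ give $3$ and $7$, matching the paper's explicit lists), and in some sense your antidiagonal recurrence is the paper's induction in disguise, but your version buys strictly more: it refines the count $P(m)+P(m+1)$ by the choice of minimal interval and by the number of diagonal steps, and it anticipates the weighted Delannoy-path model that the paper only introduces much later (Theorem~\ref{thm:mcc}) to evaluate $M(c^i,c^j)$. The trade-off is that the paper's top-down decomposition is shorter and is reused verbatim in the proof of Theorem~\ref{thm:itransf} to track the $ab$-monomial contributions, which your bottom-up bijection would not hand you as directly.
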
  
\begin{proof}  
We proceed by induction on $m$. For $m=1$ there are three
chains: $\emptyset\subset [\0,\1]$, $\emptyset\subset [\0,\0]\subset
[\0,\1]$ and $\emptyset\subset [\1,\1]\subset [\0,\1]$. 
For $m=2$, there are the following seven chains with support
$\0<z_1<\1$: 
\begin{enumerate}
\item $\emptyset\subset [\0,z_1]\subset [\0,\1]$,
\item $\emptyset\subset [\0,\0]\subset [\0,z_1]\subset [\0,\1]$,
\item $\emptyset\subset [z_1,z_1]\subset [\0,z_1]\subset [\0,\1]$,
\item $\emptyset\subset [z_1,\1]\subset [\0,\1]$,
\item $\emptyset\subset [\1,\1]\subset [z_1,\1]\subset [\0,\1]$,
\item $\emptyset\subset [z_1,z_1]\subset [z_1,\1]\subset [\0,\1]$, and
\item $\emptyset\subset [z_1,z_1]\subset [\0,\1]$. 
\end{enumerate}
Let us list the elements of the chain in $\grI(P)$ in decreasing order.   
The largest element of the chain must be $[\0,\1]$, the
unique maximum element. The next element is either the interval
$[z_1,\1]$ or the interval $[\0,z_m]$ or the interval $[z_1,z_m]$. We
can not make the minimum of this next interval larger than $z_1$ because
that would force skipping $z_1$ in the support, similarly the maximum of
this next interval is at least $z_m$. Applying the induction hypothesis
to the intervals $[z_1,\1]$, $[\0,z_m]$ and $[z_1,z_m]$, respectively,
we obtain that the number of chains is 
$$2\cdot (P(m)+P(m+1))+(P(m-1)+P(m))=P(m+1)+P(m+2).$$
\end{proof}

\begin{remark}
{\em The numbers $P(n)+P(n+1)$ are listed as sequence
  A001333 in~\cite{OEIS}. They are known as the numerators of the
  continued fraction convergents to $\sqrt{2}$, and have many
  combinatorial interpretations. The even, respectively odd
  indexed entries in this sequence may also be obtained by substitutions
  into the Tchebyshev polynomials of the first, respectively second kind.}
\end{remark}

It is transparent in the proof of Proposition~\ref{prop:pell2}
that the contributions of chains of $\grI(P)$ with a fixed
support to $\Upsilon_{\grI(P)}(a,b)$ depends only on the contribution of
their support to $\Upsilon_{P}(a,b)$. This observation
motivates the following definition.
\begin{definition}
Given an $ab$-word $w$ of degree $n$, we define $\iab(w)$ as 
the contribution of all chains of $\grI(P)$ with a fixed
support to $\Upsilon_{\grI(P)}(a,b)$, whose support is the same chain
of $P$, contributing the word $w$ to $\Upsilon_{P}(a,b)$. 
\end{definition}  

\begin{theorem}
\label{thm:itransf}
  The operator $\iab$ may be recursively computed using the following formulas.
  \begin{enumerate}
  \item $\iab(a^n)=(a+2b)a^n$ holds for $n\geq 0$. In particular, for
    the empty word $\varepsilon$ we have $\iab(\varepsilon)=(a+2b)$.
  \item $\iab(a^iba^j)=(a+2b)(a^iba^j+a^jba^i)+ba^{i+j+1}$ holds for
    $i,j\geq 0$.
  \item
    $\iab(a^ibwba^j)=\iab(a^ibw)ba^j+\iab(wba^j)ba^i+\iab(w)ba^{i+j+1}$
    holds for $i,j\geq 0$ and any $ab$-word $w$.   
\end{enumerate}    
\end{theorem}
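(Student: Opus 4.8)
The plan is to refine the chain count behind Proposition~\ref{prop:pell2} so that it records $ab$-words rather than cardinalities, classifying the chains of $\grI(P)$ with a fixed support according to their second-largest element.

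Fix a support chain $c\colon \0=z_0<z_1<\cdots<z_{m-1}<z_m=\1$ of $P$, and let $W$ be the $ab$-word it contributes to $\Upsilon_P(a,b)$; thus $W$ has exactly $m-1$ letters $b$, occurring in the ranks $\rho(z_1),\dots,\rho(z_{m-1})$, and $\iab(W)$ is by definition the sum of the words contributed to $\Upsilon_{\grI(P)}(a,b)$ by the chains of $\grI(P)$ whose support is $c$. The one structural fact I would isolate first is that the rank of an interval $[z_r,z_s]$ in $\grI(P)$ equals $\rho(z_s)-\rho(z_r)+1$, hence is intrinsic to the interval; consequently, for any $x\le y$ in $c$ the interval poset $\grI([x,y])$ embeds rank-preservingly into $\grI(P)$ with top element $[x,y]$, and the $\Upsilon_{[x,y]}$-word of the restriction of $c$ to $[x,y]$ is the evident factor of $W$.

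First I would settle the two base cases by direct enumeration. For $W=a^n$ we have $m=1$ and $c$ is $\0<\1$; the three chains of Proposition~\ref{prop:pell2} have no, resp.\ one, intermediate interval, the latter being $[\0,\0]$ or $[\1,\1]$ of rank $1$, so their words $a^{n+1}$, $ba^n$, $ba^n$ sum to $(a+2b)a^n$, as claimed. For $W=a^iba^j$ we have $m=2$; giving each of the seven chains a letter $b$ in the rank $\rho(v)-\rho(u)+1$ of each intermediate interval $[u,v]$ produces $a^{i+1}ba^j$, $a^{j+1}ba^i$, two copies of $ba^iba^j$, two copies of $ba^jba^i$, and $ba^{i+j+1}$, whose sum is the asserted value of $\iab(a^iba^j)$.

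For the recursion I would take $m\ge 3$, so $W=a^ibwba^j$ has at least two letters $b$, and sort the chains with support $c$ by their second-largest element $[z_p,z_q]\subsetneq[\0,\1]$ (which exists, since full support requires an interval strictly between $\emptyset$ and $[\0,\1]$). Every interval below $[z_p,z_q]$ lies inside it, so all their endpoints lie in $\{z_p,\dots,z_q\}$; adjoining the endpoints $z_0,z_m$ from the top must recover all of $\{z_0,\dots,z_m\}$, which forces $z_1,z_{m-1}\in\{z_p,\dots,z_q\}$, i.e.\ $p\le 1$ and $q\ge m-1$. Since $(p,q)\neq(0,m)$, the only possibilities are $[z_0,z_{m-1}]$, $[z_1,z_m]$ and $[z_1,z_{m-1}]$. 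Deleting the top gives, in each case, a bijection between these chains and the chains of $\grI([z_p,z_q])$ with support the restriction of $c$, whose defining factors of $W$ are $a^ibw$, $wba^j$ and $w$; by the structural fact such subchains contribute $\iab(a^ibw)$, $\iab(wba^j)$ and $\iab(w)$. Reattaching the top turns $[z_p,z_q]$ into an internal element, appending one $b$ in its rank $\rho(z_q)-\rho(z_p)+1$ followed by a block of $a$'s up to the rank of $[\0,\1]$; using $\rho(z_1)=i+1$ and $\rho(z_{m-1})=\operatorname{rank}(P)-1-j$, these trailing blocks evaluate to $a^j$, $a^i$ and $a^{i+j+1}$. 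Summing the three classes yields the recursion for $\iab(a^ibwba^j)$.

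The step I expect to be the main obstacle is the word bookkeeping: checking that each chain's word factors as (subword)$\,\cdot b\cdot a^{\mathrm{gap}}$ with exactly these exponents. This relies on the intrinsic rank identity together with keeping three lengths aligned --- $W=a^ibwba^j$ has length $\operatorname{rank}(P)-1$, the operator $\iab$ raises word-length by one, and $\grI([z_p,z_q])$ has words of length $\rho(z_q)-\rho(z_p)$. The single genuinely exceptional point, which is also why $m=2$ is a base case rather than an instance of the recursion, is that for $m=2$ the middle interval $[z_1,z_{m-1}]=[z_1,z_1]$ is a single point: its interval poset has rank $1$ and its unique chain contributes the empty word $\varepsilon$, not $\iab(\varepsilon)=a+2b$. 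For $m\ge 3$ the interval $[z_1,z_{m-1}]$ has rank at least one and the recursion applies without exception.
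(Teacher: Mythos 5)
Your proposal is correct and follows essentially the same route as the paper: both classify the chains of $\grI(P)$ with fixed support by their second-largest element, observe that it must be $[\0,z_{m-1}]$, $[z_1,\1]$ or $[z_1,z_{m-1}]$, and read off the three terms of the recursion, with the same direct enumeration for the two base cases. Your version is merely more explicit about the rank/word bookkeeping (and, as a bonus, your explicit computation of the seven-chain case yields the correct trailing factor $ba^{i+j+1}$, where the paper's prose has a small typo).
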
  
\begin{proof}
The only chain that contributes $a^n$ to the $ab$-index of a graded
poset is the chain $\0<\1$ in a graded poset $P$ of rank $n+1$. As seen in
the proof of Proposition~\ref{prop:pell2}, there are $3$ chains in
$\grI(P)$ whose support is  $\0<\1$, and their contribution is to the
$ab$-index of $\grI(P)$ is $(a+2b)a^n$.

Similarly, the only chains that contribute $a^iba^j$ to the $ab$-index
of a graded poset are the chains $\0<z_1<\1$ in a graded poset $P$ of rank
$i+j+2$, where the rank of $z_1$ is $i+1$. As seen in 
the proof of Proposition~\ref{prop:pell2}, there are $7$ chains in
$\grI(P)$ whose support is  $\0<z_1<\1$, and their contribution is to the
$ab$-index of $\grI(P)$ is $(a+2b)(a^iba^j+a^jba^i)+ba^{i+j}$.

Finally, consider a chain $c:\0<z_1<z_2<\cdots <z_{k}<z_{k+1}=\1$ that
contributes $a^ibwba^j$ to the $ab$-index of a graded poset $P$ of rank
$n+1$. In such a chain the rank of $z_1$ is $i+1$ and the rank of of
$z_k$ is $n-j$.  The
largest element below $[\0,\1]$ of any chain in $\grI(P)$ with support
$c$ is either $[\0,z_k]$ (of rank $n-j+1$)  or $[z_1,\1]$ (of rank
$n+1-i$) or $[z_1,z_k]$ (of rank $n-i-j+1$). The three terms correspond
to the contributions of the chains of these three types. 
\end{proof}

\begin{corollary}
There is a linear map $I_n:{\mathbb R}^{2^n}\rightarrow {\mathbb
  R}^{2^{n+1}}$ sending the flag $f$-vector of each graded poset $P$ of
rank $n+1$  into the flag $f$-vector of its graded poset of intervals
$\grI(P)$. This linear map may be obtained by encoding flag $f$-vectors
with the corresponding upsilon invariants, and extending the map $\iab$
by linearity. 
\end{corollary}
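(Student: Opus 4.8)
The plan is to reduce the statement to the single polynomial identity
$$\Upsilon_{\grI(P)}(a,b)=\iab\bigl(\Upsilon_P(a,b)\bigr),$$
valid for every graded poset $P$ of rank $n+1$, where $\iab$ is extended from $ab$-words to homogeneous noncommutative polynomials of degree $n$ by linearity, and then to read off $I_n$ by transporting this identity through the encoding of flag $f$-vectors as upsilon invariants. First I would fix the two encodings. For a graded poset of rank $n+1$ the assignment sending the flag number $f_S$ to the coefficient of $u_S$ is a linear isomorphism between $\mathbb{R}^{2^n}$ and the space of degree-$n$ homogeneous polynomials in the noncommuting variables $a,b$, the $2^n$ monomials being indexed by $S\subseteq\{1,\ldots,n\}$. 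Since $\grI(P)$ has rank $n+2$ by the preceding Proposition, the same device identifies $\mathbb{R}^{2^{n+1}}$ with the degree-$(n+1)$ homogeneous polynomials. By Theorem~\ref{thm:itransf} the operator $\iab$ carries each degree-$n$ word to a degree-$(n+1)$ polynomial, so its linear extension is a well-defined map between precisely these two polynomial spaces; composing it with the two encoding isomorphisms produces the desired map $I_n$.

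Next I would establish the identity by partitioning the chains of $\grI(P)$ according to their support. Following the convention of this section, every chain
$$\emptyset\subset[u_1,v_1]\subset\cdots\subset[u_{k+1},v_{k+1}]=[\0,\1]$$
of $\grI(P)$ has a support that is a chain of $P$ from $\0$ to $\1$, and the chains sharing a prescribed support $c$ form one block of the partition. Summing the contributions of the blocks gives
$$\Upsilon_{\grI(P)}(a,b)=\sum_c\iab(w_c),$$
the sum running over all chains $c$ of $P$ from $\0$ to $\1$ and $w_c$ denoting the $ab$-word that $c$ contributes to $\Upsilon_P$; here the equality of each block's contribution with $\iab(w_c)$ is exactly the definition of $\iab$. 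Grouping the chains $c$ by their common word $w$, so that $w$ occurs with multiplicity equal to the corresponding flag number $f_w$, this rewrites as $\sum_w f_w\,\iab(w)=\iab\bigl(\sum_w f_w\,w\bigr)=\iab(\Upsilon_P)$, which is the asserted identity; the Corollary then follows formally.

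The only place where genuine content is concealed, and hence the main obstacle, is the assertion implicit in the definition of $\iab$ that the contribution of a support-fiber depends on the word $w_c$ alone, and not on the ambient poset $P$ or on the individual chain $c$ realizing $w_c$. This translation-invariance is exactly what guarantees that one and the same matrix $I_n$ serves all posets of rank $n+1$ simultaneously, and it is precisely what the recursion of Theorem~\ref{thm:itransf} delivers: each $ab$-word is of the form $a^n$, of the form $a^iba^j$, or of the form $a^ibwba^j$ with at least two occurrences of $b$, and the third case reduces strictly to words with fewer $b$'s, so the recursion terminates and assigns one polynomial to each word independently of $P$. Granting Theorem~\ref{thm:itransf}, no further obstacle remains: the statement is the formal combination of the support partition with the linearity of the upsilon encoding.
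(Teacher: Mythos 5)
Your argument is correct and is essentially the paper's own (largely implicit) reasoning: partition the chains of $\grI(P)$ by support, use the definition of $\iab$ as the contribution of a support-fiber (whose independence of the ambient poset is exactly what Theorem~\ref{thm:itransf} makes concrete), and transport the resulting identity $\Upsilon_{\grI(P)}=\iab(\Upsilon_P)$ through the identification of flag $f$-vectors with upsilon invariants. No discrepancy with the paper's approach.
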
  

\begin{example}
\label{ex:iab}
{\em
Using Theorem~\ref{thm:itransf} we obtain the following formulas.
\begin{itemize}
\item[$n=1$:] $\iab(a)=a^2+2ba$, $\iab(b)=(a+2b)(b+b)+ba=4b^2+2ab+ba$. 
\item[$n=2$:] $\iab(a^2)=a^3+2ba^2$,
  $\iab(ab)=(a+2b)(ab+ba)+ba^2=a^2b+aba+2bab+2b^2a+ba^2$,
  $\iab(ba)=a^2b+aba+2bab+2b^2a+ba^2=\iab(ab)$, and 
\begin{align*}
  \iab(b^2)&=2\iab(b)b+\iab(\varepsilon)ba=2(4b^2+2ab+ba)b+(a+2b)ba\\
  &=8b^3+4ab^2+2bab+aba+2b^2a.
\end{align*}  
\end{itemize}
}
\end{example}

We conclude this section by a shorter proof of two key formulas
found by Joji\'{c}~\cite{Jojic}. These express the connection between
the $ab$-indices of $P$ and $\grI(P)$ using a coproduct operation first
introduced by Ehrenborg and Readdy~\cite{Ehrenborg-Readdy-cop}.
\begin{definition}
The coproduct $\Delta$ is defined on the algebra ${\mathbb Q}\langle a,
b\rangle$, whose basis is the set of all $ab$-words, as follows. Given
an $ab$-word $u=u_1u_2\cdots u_n$, we set
$$
\Delta(u)=\sum_{i=1}^n u_1\cdots u_{i-1}\otimes u_{i+1}\cdots u_n. 
$$
\end{definition}  
Note that the algebra  ${\mathbb Q}\langle a,b\rangle$ also includes the
empty word $1$ as a basis vector. In the next theorem we will use {\em
  Sweedler notation}
$$
\Delta(u)=\sum_{u} u_{(1)} \otimes u_{(2)}
$$
to refer to the coproduct and the notation $u^*$ as a
shorthand for $u^*=u_nu_{n-1}\cdots u_1$, obtained by reversing the word
$u=u_1u_2\cdots u_n$. 

\begin{theorem}[Joji\'{c}]
\label{thm:Jojicab}
Given a graded poset $P$ of rank $n+1$, we have
$\Psi_{\grI(P)}(a,b)=\Iab(\Psi_{P}(a,b))$, where the linear operator
$\Iab: {\mathbb Q}\langle a,b\rangle \rightarrow {\mathbb Q}\langle a,b\rangle$
is defined by the following recursive formulas on the basis of
$ab$-words:
\begin{align}
\Iab(u\cdot a)&=\Iab(u)\cdot a+(ab+ba)\cdot u^* +\sum_{u}
\Iab(u_{(2)})\cdot ab\cdot u_{(1)}^*\\
\Iab(u\cdot b)&=\Iab(u)\cdot b+(ab+ba)\cdot u^* +\sum_{u}
\Iab(u_{(2)})\cdot ba\cdot u_{(1)}^*.
\end{align}  
\end{theorem}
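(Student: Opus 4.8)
The plan is to deduce these $ab$-index formulas from the already-established recursion for $\iab$ (Theorem~\ref{thm:itransf}) by a change of variables, and then to identify the resulting operator with the one defined recursively in the statement. Let $\sigma$ be the algebra automorphism of ${\mathbb Q}\langle a,b\rangle$ determined by $\sigma(a)=a-b$ and $\sigma(b)=b$, so that $\Psi_P=\sigma(\Upsilon_P)$, and, by the identity $\Upsilon_P(a,b)=\Psi_P(a+b,b)$ noted after the first Corollary on $cd$-indices, $\sigma^{-1}$ is given by $a\mapsto a+b$, $b\mapsto b$. The Corollary following Theorem~\ref{thm:itransf} gives $\Upsilon_{\grI(P)}=\iab(\Upsilon_P)$ once $\iab$ is extended by linearity, so applying $\sigma$ to both sides yields
\[
\Psi_{\grI(P)}=\sigma\bigl(\iab(\sigma^{-1}(\Psi_P))\bigr)=(\sigma\circ\iab\circ\sigma^{-1})(\Psi_P).
\]
Writing $J\df\sigma\circ\iab\circ\sigma^{-1}$, this already establishes $\Psi_{\grI(P)}=J(\Psi_P)$; the remaining content of the theorem is the purely algebraic claim that $J$ obeys the two recursive formulas stated for $\Iab$. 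Once that is verified we obtain $J=\Iab$ as operators, and the displayed identity becomes $\Psi_{\grI(P)}=\Iab(\Psi_P)$.

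To verify that $J$ satisfies the recursion I would argue by induction on the length of the $ab$-word $u$. The base case $u=1$ is checked directly: here $\Delta(1)=0$ and $J(1)=\sigma(\iab(1))=\sigma(a+2b)=a+b$, from which one computes $J(a)=a^2+ab+2ba$ and $J(b)=2ab+ba+b^2$, matching the right-hand sides of the two formulas. For the inductive step, the three summands on each right-hand side should be matched against the three-term recursion of Theorem~\ref{thm:itransf}(3): the term $\Iab(u)\cdot a$ (resp.\ $\cdot b$) corresponds to appending a rank to the support, the boundary term $(ab+ba)\cdot u^*$ encodes the two extreme choices $[z_1,\1]$ and $[\0,z_k]$ for the interval lying directly below $[\0,\1]$, and the coproduct sum $\sum_u\Iab(u_{(2)})\cdot ab\cdot u_{(1)}^*$ (resp.\ $ba$) accounts for the interior choice $[z_1,z_k]$. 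The reversal $u^*$ arises because in a maximal chain of $\grI(P)$ the left endpoint of the intervals descends through the support while the right endpoint ascends, so the lower half of the support is read from top to bottom.

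The main obstacle will be the bookkeeping in the inductive step, because $\sigma$ does not commute with the coproduct: already on single letters $\Delta(\sigma a)=0$ while $(\sigma\otimes\sigma)\Delta(a)=1\otimes 1$, and it is precisely this discrepancy that produces the extra boundary term $(ab+ba)u^*$ together with the central factor $ab$ (resp.\ $ba$) inside the coproduct sum. Concretely, I would first extract from Theorem~\ref{thm:itransf} an ``append one letter'' rule expressing $\iab(va)$ and $\iab(vb)$ in terms of $\iab(v)$ and shorter data, and then conjugate this rule by $\sigma$, collecting the resulting correction terms into the reversal and coproduct expressions above. Checking that these corrections assemble exactly into $(ab+ba)u^*$ and $\sum_u\Iab(u_{(2)})\,ab\,u_{(1)}^*$ is the computational heart of the proof; the symmetry between the two formulas, which differ only by replacing the central $ab$ with $ba$, should allow the $a$-case and the $b$-case to be carried out in parallel.
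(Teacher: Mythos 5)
Your reduction of the theorem to a purely algebraic identity is sound and matches the paper's first move: the paper likewise translates the stated recursion into an equivalent pair of formulas for $\iab$ at the level of the upsilon invariant (using a modified coproduct $\Delta'$ that splits only at the letters $b$, which is exactly the intertwining discrepancy you flag between $\sigma$ and $\Delta$), and your base-case computations $J(1)=a+b$, $J(a)=a^2+ab+2ba$, $J(b)=2ab+ba+b^2$ are correct. The problem is that the entire content of the theorem lives in the step you defer: extracting an ``append one letter'' rule for $\iab(ua)$ and $\iab(ub)$. That rule is precisely the pair of identities the paper labels (\ref{eq:ua}) and (\ref{eq:ub}), and the paper does \emph{not} obtain them from Theorem~\ref{thm:itransf}; it proves them by a fresh combinatorial analysis of chains of intervals, classifying a chain supported on $\0<x_1<\cdots<x_k<x_{k+1}<\1$ according to whether it contains $[\0,x_{k+1}]$, some $[x_i,x_{k+1}]$, or neither (for the $b$-case), and by a bijection replacing $\1$ with a coatom $x_{k+1}$ of $[x_k,\1]$ (for the $a$-case). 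Your sketch of what the three summands ``should correspond to'' is itself a combinatorial argument of this kind, so you are implicitly switching from the algebraic route to the combinatorial one without carrying either out.

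Concretely, the algebraic route is harder than your outline suggests. The recursion of Theorem~\ref{thm:itransf} is organized around peeling the outermost $b$'s off both ends of $a^ibwba^j$, not around the last letter. If you try to compute $\iab(ua)-\iab(u)\cdot a$ for $u=a^ibwba^j$ from Theorem~\ref{thm:itransf}(3), you get
$$\iab(ua)-\iab(u)\cdot a=\bigl(\iab(wba^{j+1})-\iab(wba^j)\cdot a\bigr)\cdot ba^i,$$
so the induction recurses into the word $wba^j$ obtained by deleting the prefix $a^ib$; unwinding this produces a sum over all ways of splitting $u$ at a letter $b$, with the deleted prefixes accumulating in reversed order on the right. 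That is exactly where the primed coproduct $\sideset{}{'}\sum_u \iab(u_{(2)})\cdot(\cdots)\cdot u_{(1)}^*$ and the boundary term come from, but assembling it requires a careful induction with the base cases $\iab(a^{n+1})-\iab(a^n)a=0$ and $\iab(a^iba^{j+1})-\iab(a^iba^j)a=(a+2b)a^jba^{i}(a-a)+\cdots$ handled separately, followed by the substitution $a\mapsto a-b$ and the verification that $\Delta'$ transforms into $\Delta$ on words in $e=a-b$ and $b$. None of this is present in your proposal, and until it is, the theorem is not proved. I would recommend either completing that induction explicitly or abandoning Theorem~\ref{thm:itransf} as the starting point and proving the append-one-letter identities directly from the chain combinatorics, as the paper does.
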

\begin{proof}
We prove the theorem by showing the following, linearly equivalent
formulas for the operator $\iab$:
\begin{align}
\label{eq:ua}
\iab(ua)&=\iab(u)a+\sideset{}{'}\sum_{u}\iab(u_{(2)})\cdot
(ab-ba)\cdot u_{(1)}^*\\
\label{eq:ub}
\iab(ub)&=\iab(u)b+(ab+ba+2b^2)\cdot
u^*+\sideset{}{'}\sum_{u}\iab(u_{(2)})\cdot b(a+b)\cdot u_{(1)}^*.
\end{align}  
Here the symbol $\sum_{u}^{\prime}$ is Sweedler notation for the coproduct
$\Delta'$ defined on the algebra ${\mathbb Q}\langle a,
b\rangle$ by the rule
$$
\Delta'(a^{i_1}ba^{i_2}b\cdots a^{i_k}ba^{i_{k+1}})
=\sum_{j=1}^k a^{i_1}ba^{i_2}b\cdots a^{i_j}\otimes
a^{i_{j+1}}\cdots a^{i_k}ba^{i_{k+1}}. 
$$
Equations~(\ref{eq:ua}) and (\ref{eq:ub}) are linearly equivalent to the
formulas stated in the theorem because
$ab$-index $\Psi_P(a,b)$ is obtained by substituting $e=a-b$ into
$\Upsilon_P(a,b)$, and the definition of $\Delta'$ corresponds to the
rule
$$
\Delta(e^{i_1}be^{i_2}b\cdots e^{i_k}be^{i_{k+1}})
=\sum_{j=1}^k e^{i_1}be^{i_2}b\cdots e^{i_j}\otimes
e^{i_{j+1}}\cdots e^{i_k}be^{i_{k+1}}
$$
whose verification is left to the reader. After substituting $a-b$ into
$a$,  equations~(\ref{eq:ua}) and (\ref{eq:ub}) become
\begin{align*}
\Iab(u(a-b))&=\Iab(u)(a-b)+\sum_{u}\Iab(u_{(2)})\cdot
(ab-ba)\cdot u_{(1)}^*\\
\Iab(ub)&=\Iab(u)b+(ab+ba)\cdot
u^*+\sum_{u}\Iab(u_{(2)})\cdot ba\cdot u_{(1)}^*.
\end{align*}  
The sum of these equations is the first equation stated in the theorem,
while the second equation is the same in both pairs.

To prove (\ref{eq:ub}), note that $\iab(ub)$ is the sum of all
$ab$-words associated to chains of intervals supported by a fixed chain
$\0<x_1<\cdots <x_k<x_{k+1}<\1$ where the rank of $x_{k+1}$ is one less than
the rank of $\1$ and the set of ranks of $x_1<\cdots <x_k$ is marked by
the $ab$-word $u$. The summand $\iab(u)b$ is contributed by all chains
of intervals containing $[\0,x_{k+1}]$. These chains can not contain any
interval containing $\1$, except for $[\0,\1]$, so the remaining
intervals in all such chains are contained in $[\0,x_{k+1}]$.
The sum $\sum_{u}'\iab(u_{(2)})\cdot b(a+b)\cdot u_{(1)}^*$ is
contributed by all chains of intervals, containing some intervals of the
form $[x_j,x_{k+1}]$, for some $j\geq 1$ but not containing
$[\0,x_{k+1}]$. Let $i\geq 1$ be the least index 
for which $[x_i,x_{k+1}]$ belongs to the chain. the factor
$\iab(u_{(2)})b$ is contributed by the intervals contained in this
interval and by the interval $[x_i,x_{k+1}]$ itself. For each $j<i$ the
interval $[x_j,\1]$ must belong to the chain, these contribute the
factor $au_{(1)}^*$. The factor $(a+b)$ right after $\iab(u_{(2)})b$
reflects the possibility of adding to the chain $[x_j,\1]$ or omitting
it. This choice may be done independently of everything else. 
The remaining terms are contributed by chains not
containing any interval of the form $[\0,x_{k+1}]$ or $[x_i,x_{k+1}]$.
We claim that all these remaining chain contribute the term
$(ab+ba+2b^2)\cdot u^*$. Since $x_{k+1}$ must be part of the support,
these chains contain at least one of $[x_{k+1},\1]$ and $[x_{k+1},x_{k+1}]$.
The intersection of such a chain of intervals with the set $\{[\1,\1],
[x_{k+1},\1], [x_{k+1},x_{k+1}] \}$ can be $\{[x_{k+1},\1],\}$,
$\{[x_{k+1},x_{k+1}]\}$, $\{[\1,\1], [x_{k+1},\1],\}$, or 
$\{[x_{k+1},\1], [x_{k+1},x_{k+1}] \}$. These four possibilities account
for the presence of a factor $(ab+ba+2b^2)$. For all $i\leq k$ the
interval $[x_i,\1]$ must be present, this explains the presence of the
factor $u^*$.  

To prove (\ref{eq:ua}), note that $\iab(ua)$ is the sum of all
$ab$-words associated to chains of intervals supported by a fixed chain
$C:\0<x_1<\cdots <x_k<\1$ where the difference between the rank of $x_{k}$
and the rank of $\1$ is at least $2$ and the set of ranks of $x_1<\cdots
<x_k$ is marked by the $ab$-word $u$. Let us fix a coatom $x_{k+1}$ in
the interval $[x_k,\1]$. There is an obvious bijection between the
chains of intervals supported $C$ and the chains of intervals
supported by the chain $C':\0<x_1<\cdots <x_k<x_{k+1}$. This bijection
is induced by replacing each occurrence of $\1$ by $x_{k+1}$. Clearly
the sum of the $ab$-words of all intervals supported by $C'$ is
$\iab(u)$, let us multiply this sum by $a$ on the right and compare the
contribution of a chain of intervals supported by $C$ to $\iab(ua)$ with
the contribution of the corresponding chain of intervals supported by
$C'$ to $\iab(u)a$. 

{\bf\noindent Case 1:} The chain of intervals supported by $C$ contains
no interval of the form $[x_i,\1]$. The contribution of such a chain of
intervals to $\iab(ua)$ is the same as the contribution of the
corresponding chain of intervals supported by $C'$ to $\iab(u)a$.

{\bf\noindent Case 2:} The chain of intervals supported by $C$ contains
an interval of the form $[x_i,\1]$. Let us take the largest $i$ with this
property, that is, the least interval of this form, and let us
break the $ab$ word corresponding to chain of intervals at the letter
$b$ corresponding to this interval. The intervals of the
chain of intervals contained in $[x_i,\1]$ do not contain $\1$ and they
are the same in the corresponding chain of intervals supported by
$C'$. Their contribution is $\iab(u_{(2)})$. The remaining intervals
of the original chain of intervals supported by $C$ are all intervals of
the form $[x_j,\1]$ where $j<i$. In the corresponding chain of intervals
each $[x_j,\1]$ needs to be replaced by $[x_j,x_{k+1}]$, the rank of the
corresponding interval is one less: the contribution of such a chain of
intervals is thus $ab u_{(1)}^*$ to $\iab(ua)$, and the contribution of
the corresponding intervals to $\iab(u)a$ is thus $ba u_{(1)}^*$.  
\end{proof}

\section{Interval transforms of the second kind}
\label{sec:t2}

In Section~\ref{sec:itcheb} we have seen that for any poset $P$, the
order complex of the poset of intervals $I(P)$ is a Tchebyshev
triangulation of the order complex of $P$. In this setting, the elements
of the original poset $P$ are identified with the singleton intervals in
$I(P)$. Hence we make the following definition.

\begin{definition}
Given a partially ordered set $P$ we define its {\em interval transform
  of the second kind $\II(P)$} the multiset of subposets of $I(P)$
defined as follows: for each $x\in P$ we take the subposets of $I(P)$
formed by all elements $[y,z]\in I(P)$ containing $[x,x]$. 
\end{definition}  

It is a direct consequence of the definition and Theorem~\ref{thm:it} we
obtain the following.

\begin{corollary}
\label{cor:iit}
For any finite poset $P$ the order complex of $\II(P)$ is a Tchebyshev
triangulation of the second kind of the order complex of $P$, associated
to the Tchebyshev triangulation of the first kind that is the order
complex of $I(P)$. 
\end{corollary}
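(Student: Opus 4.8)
The plan is to read off the statement by unwinding the two definitions involved, using Theorem~\ref{thm:it} as the bridge between the poset level and the complex level. By Theorem~\ref{thm:it}, the order complex $\triangle(I(P))$ \emph{is} a Tchebyshev triangulation $T(\triangle(P))$ of the first kind, under the identification sending each singleton $[x,x]$ to the original vertex $x\in P$ and each nonsingleton interval $[u,v]$ to the midpoint of the edge $\{u,v\}$. This is precisely the first-kind triangulation named in the corollary, so it remains only to recognize its associated triangulation of the second kind.

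By definition, $U(\triangle(P))$ is the multiset of links $\link_{T(\triangle(P))}(\{v\})$ taken over the \emph{original} vertices $v$, i.e.\ over the singletons $[x,x]$ with $x\in P$; no link is taken at a midpoint. Both $U(\triangle(P))$ and $\II(P)$ are therefore multisets indexed by $P$, so I would match their members one $x$ at a time. The core computation is that of $\link_{\triangle(I(P))}(\{[x,x]\})$: a face of it is a chain $\tau$ in $I(P)$ with $[x,x]\notin\tau$ and $\tau\cup\{[x,x]\}$ again a chain. The key observation is that $[x,x]$, being a singleton interval, is a minimal element of $I(P)$, so every element comparable with it must contain it. Hence the link is exactly the order complex of the intervals of $P$ properly containing $[x,x]$, which---up to its minimum $[x,x]$---is the member of $\II(P)$ indexed by $x$; assembling over all $x\in P$ gives the asserted equality of multisets of simplicial complexes.

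I do not expect a genuine obstacle, since the whole statement is definitional once Theorem~\ref{thm:it} is granted. The only point deserving care is the bookkeeping of the distinguished vertex $[x,x]$: it is discarded when forming the link but is the unique minimum of the subposet of intervals containing it, so that $\triangle(\II(P)_x)$ is the cone with apex $[x,x]$ over the link. One must therefore be explicit that this apex is not retained when passing to the order complex---consistent with the paper's convention of deleting the extreme elements before taking order complexes---so that the order complex matches the link exactly. Making this convention precise is the one thing standing between the definitional observation and a fully rigorous identification.
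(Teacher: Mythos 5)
Your argument is exactly the intended one: the paper offers no written proof beyond declaring the statement a direct consequence of Theorem~\ref{thm:it} and the definition of $\II(P)$, and your unwinding---identify $\triangle(I(P))$ with $T(\triangle(P))$, note that the links of the second-kind triangulation are taken only at the original vertices $[x,x]$, and use minimality of $[x,x]$ in $I(P)$ to see that $\link_{\triangle(I(P))}(\{[x,x]\})$ is the order complex of the intervals properly containing $[x,x]$---is precisely that consequence made explicit. Your closing remark about discarding the apex $[x,x]$ (so that one takes the link rather than the cone over it) correctly pins down the one convention the paper leaves implicit, and nothing further is needed.
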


\begin{corollary}
\label{cor:iit2}  
If $P$ is a graded poset then its interval transform of the second kind
is the collection of all intervals of the form
$[[x,x], [\0,\1]]\subset I(P)$ for each $x\in P$.    
\end{corollary}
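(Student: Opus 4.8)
The plan is to unwind the definition of $\II(P)$ and recognize the resulting subposet as an honest order interval in $I(P)$. First I would recall that the order on $I(P)$ is containment of intervals viewed as subsets of $P$, so that for a singleton $[x,x]$ and an arbitrary element $[y,z]\in I(P)$ one has $[x,x]\subseteq [y,z]$ precisely when $y\le x\le z$; equivalently, the phrase ``$[y,z]$ contains $[x,x]$'' appearing in the definition of $\II(P)$ is literally the order relation $[x,x]\le [y,z]$ in $I(P)$. Hence the subposet attached to a fixed $x\in P$ in that definition is exactly the principal filter $\{w\in I(P)\::\: w\ge [x,x]\}$.

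The one additional ingredient needed in the graded case is that this filter is bounded above, so that it can be written as a genuine order interval. Since $P$ is graded it has a minimum $\0$ and a maximum $\1$, and every interval $[y,z]$ of $P$ satisfies $\0\le y\le z\le \1$, whence $[y,z]\subseteq [\0,\1]=P$. Therefore $[\0,\1]$ is the unique maximum element of $I(P)$, and the principal filter generated by $[x,x]$ coincides with the order interval $[[x,x],[\0,\1]]$ in $I(P)$. Ranging over all $x\in P$ yields precisely the multiset of intervals described in the statement, which is exactly the definition of $\II(P)$ specialized to the graded situation (Corollary~\ref{cor:iit} supplying the interpretation as a Tchebyshev triangulation of the second kind).

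There is essentially no obstacle here, and I expect the whole argument to be a short definition-chase. The only real content is the observation, valid \emph{only} because $P$ is bounded, that the top element $[\0,\1]$ of $I(P)$ dominates every element, so that the ``up-set of $[x,x]$'' description and the ``interval $[[x,x],[\0,\1]]$'' description agree. I would close by remarking that for a general (non-bounded) poset $I(P)$ need not possess a maximum, which is precisely why the clean interval description requires the graded hypothesis.
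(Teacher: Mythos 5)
Your proof is correct and is exactly the definition-chase the paper intends: the paper states this corollary without proof as an immediate consequence of the definition of $\II(P)$, and your observation that the principal filter above $[x,x]$ in $I(P)$ equals the order interval $[[x,x],[\0,\1]]$ because the bounded poset $P=[\0,\1]$ is the maximum of $I(P)$ is precisely the point.
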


\begin{definition}
\label{def:grII}  
Let $(P_1,\ldots,P_m)$ be a list of graded posets. We extend the
notions of the upsilon invariant and $ab$-index by linearity, i.e., we
set
$$
\Upsilon_{(P_1,\ldots,P_m)}(a,b)=\sum_{i=1}^m \Upsilon_{P_i}(a,b)
\quad\mbox{and}\quad
\Psi_{(P_1,\ldots,P_m)}(a,b)=\sum_{i=1}^m \Psi_{P_i}(a,b)
$$
For a graded poset $P$ we then define the {\em total $ab$-index}
$\Psi_{\grII(P)}(a,b)$ of $\grII(P)$ by 
$$\Psi_{\grII(P)}(a,b)=\sum_{u\in P} \Psi_{[[x,x], [\0,\1]]}(a,b).$$
\end{definition}  

The following statement is straightforward.
\begin{proposition}
\label{prop:i2prod}  
Let $P$ be a graded poset. For each $x\in P$, the set of intervals
$[y,z]$ contained in $[[x,x],[\0,\1]]\subset \grI(P)$ and
ordered by inclusion is isomorphic to the direct product $[\0,x]^*\times
[x,\1]$. Here $[\0,x]^*$ is the {\em dual} of the poset $[\0,x]$,
obtained by reversing the order of $[\0,x]$. 
\end{proposition}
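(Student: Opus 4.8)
The plan is to exhibit the obvious coordinate map $[y,z]\mapsto (y,z)$ and to verify directly that it is an order isomorphism, so the proof is a short unwinding of definitions rather than a construction requiring real ideas. First I would identify the underlying set of the interval $[[x,x],[\0,\1]]$ in $\grI(P)$. An element $[y,z]$ of $\grI(P)$ lies in this interval precisely when $[x,x]\subseteq [y,z]$, since $[\0,\1]$ is the maximum of $\grI(P)$ and so the upper constraint $[y,z]\subseteq[\0,\1]$ is automatic. The containment $[x,x]\subseteq [y,z]$ is equivalent to $y\leq x\leq z$. Hence $y$ ranges over $[\0,x]$ and $z$ over $[x,\1]$, and conversely every pair $(y,z)\in [\0,x]\times[x,\1]$ satisfies $y\leq x\leq z$, so in particular $y\leq z$ and $[y,z]$ is a genuine nonempty interval belonging to $[[x,x],[\0,\1]]$. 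This establishes that $[y,z]\mapsto (y,z)$ is a bijection onto the underlying set of $[\0,x]\times[x,\1]$.

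Next I would check that this bijection carries the containment order of $\grI(P)$ to the product order. In $\grI(P)$ the relation $[y_1,z_1]\leq[y_2,z_2]$ is inclusion, i.e.\ $[y_1,z_1]\subseteq[y_2,z_2]$, which holds exactly when $y_2\leq y_1$ and $z_1\leq z_2$ in $P$. On the other side, $(y_1,z_1)\leq(y_2,z_2)$ in $[\0,x]^*\times[x,\1]$ holds exactly when $y_1\leq y_2$ in $[\0,x]^*$ and $z_1\leq z_2$ in $[x,\1]$; the first condition unwinds to $y_2\leq y_1$ in $P$ and the second to $z_1\leq z_2$ in $P$. These two pairs of conditions coincide, so the bijection is an order isomorphism, which is exactly the assertion.

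The only point that requires any care---and the reason the first factor is dualized---is that inclusion of intervals reverses the natural order on the lower endpoints: a larger interval has a \emph{smaller} bottom element, so the lower endpoints must be compared in $[\0,x]^*$ while the upper endpoints are compared in $[x,\1]$. There is no genuine obstacle here; the substance of the proof is simply the observation that the containment order on intervals factors as a pair of independent conditions on the two endpoints. Accordingly, I expect the whole argument to be the direct verification sketched above, consistent with the statement being labeled straightforward.
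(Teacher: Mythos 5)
Your proof is correct and is exactly the direct verification the paper has in mind: the paper offers no written argument, simply declaring the proposition straightforward, and the map $[y,z]\mapsto(y,z)$ with the dualization of the first factor is the intended unwinding. Your observation that containment of nonempty intervals reverses the order on lower endpoints while preserving it on upper endpoints is precisely the one point of substance, and you handle it correctly.
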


Proposition~\ref{prop:i2prod} allows us to compute the effect of taking
the interval transform on the second kind on the $ab$-index of a poset
using the {\em mixing operator} introduced by Ehrenborg and
Readdy~\cite[Definition 9.1]{Ehrenborg-Readdy-cop}   
\begin{definition}
The {\em mixing operator $M$} is a bilinear operator defined on the
noncommutative algebra ${\mathbb Q}\langle a, b\rangle$ as the
follows. For each pair of $ab$-words $(u,v)$ we set 
$$
M(u,v)=\sum_{r=1}^2\sum_{s=1}^2 \sum_{n-r-s-1 \mbox{ is even}} M_{r,s,n}(u,v). 
$$
Here the operators $M_{r,s,n}(u,v)$ are recursively defined by
\begin{align*}
  M_{1,2,2}(u,v)&= u\cdot a\cdot v,\\
  M_{2,1,2}(u,v)&= u\cdot b\cdot v,\\
  M_{1,s,n+1}(u,v)&=\sum_{u} u_{(1)}\cdot a\cdot
  M_{2,s,n}(u_{(2)},v)\quad \mbox{and}\\
  M_{2,s,n+1}(u,v)&=\sum_{v} v_{(1)}\cdot b\cdot M_{1,s,n}(u,v_{(2)}).\\
\end{align*}  
\end{definition}   

It has been shown by Ehrenborg and
Readdy~\cite[Theorem 9.2]{Ehrenborg-Readdy-cop} that the $ab$-index of
the direct product of the graded posets $P$ and $Q$ is given by 
\begin{align}
  \Psi_{P\times Q}(a,b)=M(\Psi_P(a,b),\Psi_Q(a,b)).
\label{eq:mixing}  
\end{align}

Combining Equation~(\ref{eq:mixing}) with Corollary~\ref{cor:iit2} we may
compute the total $ab$-index of an interval transform of a
second kind as follows. 
\begin{theorem}
\label{thm:Jojicab2}  
Given a graded poset $P$ of rank $n+1$, we have
$\Psi_{\grII(P)}(a,b)=\IIab(\Psi_{P}(a,b))$, where the linear operator
$\IIab: {\mathbb Q}\langle a,b\rangle \rightarrow {\mathbb Q}\langle a,b\rangle$
is given by the formula
$$
\IIab(u)=u+u^*+\sum_u M(u_{(1)}^*,u_{(2)}).
$$
\end{theorem}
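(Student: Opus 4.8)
The plan is to expand the total $ab$-index of $\grII(P)$ termwise over the elements of $P$, rewrite each summand as the $ab$-index of a direct product via Proposition~\ref{prop:i2prod}, and then reorganize the resulting sum using the mixing operator together with the Ehrenborg--Readdy coproduct formula for the $ab$-index. Concretely, by Corollary~\ref{cor:iit2} and Definition~\ref{def:grII} I start from $\Psi_{\grII(P)}(a,b)=\sum_{x\in P}\Psi_{[[x,x],[\0,\1]]}(a,b)$, and Proposition~\ref{prop:i2prod} identifies each interval $[[x,x],[\0,\1]]$ with the direct product $[\0,x]^*\times[x,\1]$.

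First I would record the elementary fact that passing to the dual of a graded poset reverses its $ab$-index, i.e.\ $\Psi_{Q^*}=(\Psi_Q)^*$. This holds because a chain of $Q^*$ with a given rank set corresponds to a chain of $Q$ with the complementary rank set, which amounts to reversing the associated $ab$-word, and reversal commutes with the substitution $a\mapsto a-b$ defining $\Psi$ from $\Upsilon$. Combined with the product formula~\eqref{eq:mixing}, this gives $\Psi_{[[x,x],[\0,\1]]}=M\big(\Psi_{[\0,x]}^*,\Psi_{[x,\1]}\big)$ for every $x$ strictly between $\0$ and $\1$, where both factors are genuine graded posets of positive rank.

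Next I would isolate the two extreme elements $x=\0$ and $x=\1$, for which the product decomposition degenerates, and argue directly rather than through $M$. The intervals containing $[\0,\0]$ are exactly the $[\0,z]$ with $z\in P$, so $[[\0,\0],[\0,\1]]\cong P$ and this term contributes $\Psi_P$; dually the intervals containing $[\1,\1]$ are the $[y,\1]$, so $[[\1,\1],[\0,\1]]\cong P^*$ and this term contributes $(\Psi_P)^*$. These two boundary contributions account precisely for the summands $u+u^*$ in the claimed formula. For the remaining internal sum I would invoke the coproduct identity $\Delta(\Psi_P)=\sum_{\0<x<\1}\Psi_{[\0,x]}\otimes\Psi_{[x,\1]}$ from~\cite{Ehrenborg-Readdy-cop}; applying the reversal $*$ to the left tensor factor and then the bilinear operator $M$ turns $\sum_{\0<x<\1}M\big(\Psi_{[\0,x]}^*,\Psi_{[x,\1]}\big)$ into $\sum_{\Psi_P}M\big((\Psi_P)_{(1)}^*,(\Psi_P)_{(2)}\big)$. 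Assembling the three pieces gives $\Psi_{\grII(P)}=\Psi_P+(\Psi_P)^*+\sum_{\Psi_P}M\big((\Psi_P)_{(1)}^*,(\Psi_P)_{(2)}\big)=\IIab(\Psi_P)$, as desired.

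The main obstacle is the correct invocation of the coproduct formula: everything hinges on recognizing that the Sweedler sum $\sum_{\Psi_P}(\Psi_P)_{(1)}\otimes(\Psi_P)_{(2)}$ appearing in the statement is precisely $\sum_{\0<x<\1}\Psi_{[\0,x]}\otimes\Psi_{[x,\1]}$, so that $\IIab$ secretly records the decomposition of $P$ at its internal elements. A secondary point requiring care is the bookkeeping at the boundary: one must check that the degenerate factors at $x=\0$ and $x=\1$ really contribute $\Psi_P$ and $(\Psi_P)^*$ and are not (incorrectly) folded into the mixing-operator sum, since the rank-zero interval $[\0,\0]$ (respectively $[\1,\1]$) acts as a neutral element for the direct product but does not fit the generic form of~\eqref{eq:mixing}.
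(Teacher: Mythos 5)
Your proposal is correct and follows essentially the same route as the paper: decompose $\Psi_{\grII(P)}$ over $x\in P$ via Proposition~\ref{prop:i2prod}, apply the mixing-operator product formula~\eqref{eq:mixing}, split off the two boundary terms $x=\0,\1$ giving $\Psi_P+(\Psi_P)^*$, and identify the interior sum with the Sweedler sum via the Ehrenborg--Readdy coproduct identity. Your added care with the duality fact $\Psi_{Q^*}=(\Psi_Q)^*$ and the degenerate boundary cases only makes explicit what the paper leaves implicit (note only that the rank set of a chain in $Q^*$ is the \emph{reflected}, not complementary, rank set).
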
  
\begin{proof}
By Definition~\ref{def:grII} and Proposition~\ref{prop:i2prod} we have
$$
\Psi_{\grII(P)}(a,b)=\sum_{x\in P} \Psi_{[\0,x]^*\times [x,\1]}(a,b)
$$

By Equation~(\ref{eq:mixing}) this may be rewritten as
\begin{align*}
\Psi_{\grII(P)}(a,b)&=
\sum_{x\in P} M(\Psi_{[\0,x]^*}(a,b),\Psi_{[x,\1]}(a,b))\\
&=
\Psi_{[\0,\1]}(a,b)^*+\Psi_{[\0,\1]}(a,b)+\sum_{\0<x<\1}
M(\Psi_{[\0,x]}(a,b)^*,\Psi_{[x,\1]}(a,b)). 
\end{align*}
The statement is now a direct consequence of~\cite[Equation
  (3.1)]{Ehrenborg-Readdy-cop}, stating
$$
\Delta{\Psi_{P}}(a,b)=\sum_{\0<x<\1} \Psi_{[\0,x]}(a,b)\otimes
  \Psi_{[x,\1]}(a,b).  
$$
\end{proof}  

In analogy to~\cite[Theorem 10.10]{Ehrenborg-Readdy-Tch} we may find
many eigenvalues and eigenvectors of the operator $\II: {\mathbb Q}\langle
a,b\rangle \rightarrow {\mathbb Q}\langle a,b\rangle$. The quest to find
the eigenvalues of $\II$ is complicated by the fact that this linear
operator has a nontrivial kernel. To find part of this kernel, we first extend
the operator $u\mapsto u^*$ by linearity to all $ab$-polynomials.  
\begin{corollary}
\label{cor:kernel}
If the homogeneous $ab$-polynomial $u\in {\mathbb Q}\langle
a,b\rangle_n$ satisfies $u^*=-u$ then $\II(u)=0$
\end{corollary}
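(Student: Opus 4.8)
The plan is to read off the operator from Theorem~\ref{thm:Jojicab2} and exploit the cancellation forced by the hypothesis $u^*=-u$. Recall that the operator is given by
$$\II(u)=u+u^*+\sum_u M(u_{(1)}^*,u_{(2)}),$$
so the first step is to observe that the two ``boundary'' terms cancel: since $u^*=-u$ we have $u+u^*=0$, and the whole problem reduces to showing that the mixing term $S:=\sum_u M(u_{(1)}^*,u_{(2)})$ vanishes.

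The second step is to record how the reversal $u\mapsto u^*$ interacts with the coproduct $\Delta$. A direct inspection of the deletion coproduct shows that reversing a word reverses and transposes the two tensor factors, i.e.
$$\Delta(u^*)=\sum_u u_{(2)}^*\otimes u_{(1)}^*.$$
Writing $\sigma(x\otimes y)=y^*\otimes x^*$ for the corresponding involution of ${\mathbb Q}\langle a,b\rangle\otimes{\mathbb Q}\langle a,b\rangle$, this reads $\Delta\circ{*}=\sigma\circ\Delta$. Consequently the hypothesis $u^*=-u$ translates into $\sigma(\Delta(u))=\Delta(u^*)=-\Delta(u)$; that is, $\Delta(u)$ lies in the $(-1)$-eigenspace of $\sigma$.

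The third step is to feed this into the mixing operator. I would introduce the bilinear map $\Phi(x\otimes y)=M(x^*,y)$, so that $S=\Phi(\Delta(u))$, and then show that $\Phi$ is invariant under $\sigma$. This is where the decisive property of $M$ enters: since the direct product of graded posets is commutative, the mixing operator is symmetric, $M(x,y)=M(y,x)$ (Ehrenborg and Readdy~\cite{Ehrenborg-Readdy-cop}). Using this,
$$\Phi(\sigma(x\otimes y))=\Phi(y^*\otimes x^*)=M(y,x^*)=M(x^*,y)=\Phi(x\otimes y),$$
so $\Phi\circ\sigma=\Phi$. Combining the last two steps gives $S=\Phi(\Delta(u))=\Phi(\sigma(\Delta(u)))=\Phi(-\Delta(u))=-S$, whence $S=0$ and therefore $\II(u)=0$.

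The main obstacle is the symmetry $M(x,y)=M(y,x)$ of the mixing operator: everything else is bookkeeping with reversal and the coproduct. While this symmetry is geometrically evident from $P\times Q\cong Q\times P$ at the level of $ab$-indices of genuine posets, one must be sure it holds as an identity of bilinear operators on all of ${\mathbb Q}\langle a,b\rangle$; I would either invoke the symmetric form of the mixing operator from~\cite{Ehrenborg-Readdy-cop}, or, failing that, prove $M(x,y)=M(y,x)$ directly by induction on the length using the recursive definition. A secondary point to verify carefully is the transposition (and absence of sign) in the coproduct--reversal identity $\Delta\circ{*}=\sigma\circ\Delta$, since an error there would break the eigenspace argument.
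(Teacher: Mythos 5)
Your argument is correct and is exactly the intended route: the paper offers no written proof beyond the remark that the corollary ``is a direct consequence of Theorem~\ref{thm:Jojicab2},'' and your computation (cancellation of $u+u^*$, the identity $\Delta(u^*)=\sum_u u_{(2)}^*\otimes u_{(1)}^*$, and the symmetry $M(x,y)=M(y,x)$, which the paper itself records as the ``obvious'' relation~(\ref{eq:Mswap}) and which holds as a bilinear identity because flag $f$-vectors of graded posets span each graded component) is precisely the bookkeeping that makes that remark rigorous.
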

Corollary~\ref{cor:kernel} is a direct consequence of
Theorem~\ref{thm:Jojicab2}. It inspires decomposing the vectorspace
${\mathbb Q}\langle a,b\rangle_n$ of $ab$-polynomials of degree $n$ into
a direct sum of the vector spaces of {\em symmetric} and {\em
  antisymmetric} $ab$-polynomials.
\begin{definition}
A homogeneous $ab$-polynomial $u\in{\mathbb Q}\langle a,b\rangle_n$ of
degree $n$ is {\em symmetric} if it satisfies $u^*=u$ and {\em
  antisymmetric} if it satisfies $u^*=-u$. We denote the vectorspace of
symmetric, respectively antisymmetric $ab$-polynomials of degree $n$ by
$\Sym{n}$, respectively $\Asym{n}$.  
\end{definition}  
\begin{proposition}
  The vectorspace ${\mathbb Q}\langle a,b\rangle_n$ may be written as
  the direct sum
  $$
{\mathbb Q}\langle a,b\rangle_n=\Sym{n}\oplus \Asym{n}.
$$
Here $\dim \Asym{n}=2^{n-1}-2^{\lfloor(n-1)/2\rfloor}$ and $\dim
\Sym{n}=2^{n-1}+2^{\lfloor(n-1)/2\rfloor}$.  
\end{proposition}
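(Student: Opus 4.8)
The plan is to treat reversal $u\mapsto u^*$ as a linear involution on the $2^n$-dimensional space ${\mathbb Q}\langle a,b\rangle_n$ and to invoke the standard eigenspace decomposition of an involution over a field of characteristic zero. First I would note that reversal is linear and satisfies $(u^*)^*=u$, so that every $u$ may be written as $u=\frac12(u+u^*)+\frac12(u-u^*)$, with the first summand fixed by reversal and the second negated by it. This shows $\Sym{n}+\Asym{n}={\mathbb Q}\langle a,b\rangle_n$; the sum is direct because $u^*=u$ together with $u^*=-u$ forces $2u=0$, hence $u=0$. This settles the direct sum decomposition with essentially no computation.

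For the dimensions I would analyze the action of reversal on the basis of $ab$-words of length $n$. Reversal permutes this basis, its fixed points being exactly the palindromes and its remaining orbits being two-element sets $\{w,w^*\}$ with $w\neq w^*$. Writing $p$ for the number of palindromes, each palindrome is itself a symmetric basis vector and contributes nothing to $\Asym{n}$, while each two-element orbit contributes one symmetric vector $w+w^*$ and one antisymmetric vector $w-w^*$. Since the $(2^n-p)/2$ vectors $w+w^*$ together with the $p$ palindromes span $\Sym{n}$, and the $(2^n-p)/2$ vectors $w-w^*$ span $\Asym{n}$, and in each case these spanning sets are linearly independent, one obtains $\dim\Sym{n}=(2^n+p)/2$ and $\dim\Asym{n}=(2^n-p)/2$.

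It then remains to count palindromes and reconcile the exponents with the stated formula. A palindrome of length $n$ is freely determined by its first $\lceil n/2\rceil$ letters (for odd $n$ the middle letter is unconstrained and the two halves are mirror images), so $p=2^{\lceil n/2\rceil}$. Substituting gives $\dim\Asym{n}=2^{n-1}-2^{\lceil n/2\rceil-1}$ and $\dim\Sym{n}=2^{n-1}+2^{\lceil n/2\rceil-1}$. The only bookkeeping point is the parity-free identity $\lceil n/2\rceil-1=\lfloor(n-1)/2\rfloor$, which converts these into the claimed expressions. I expect this exponent reconciliation, together with the palindrome count, to be the sole fiddly step; the decomposition into $\Sym{n}\oplus\Asym{n}$ is a purely formal consequence of $\ast$ being an involution in characteristic zero.
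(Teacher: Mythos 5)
Your proposal is correct and follows essentially the same route as the paper: the decomposition via $u=\frac{1}{2}(u+u^*)+\frac{1}{2}(u-u^*)$, and the dimension count via the $2^{\lceil n/2\rceil}=2^{\lfloor(n+1)/2\rfloor}$ palindromic words together with the pairs $\{w,w^*\}$ contributing $w\pm w^*$. The exponent reconciliation $\lceil n/2\rceil-1=\lfloor(n-1)/2\rfloor$ is the only point the paper leaves implicit, and you handle it correctly.
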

\begin{proof}
Clearly $\Sym{n}\cap \Asym{n}=0$ and each $u\in {\mathbb Q}\langle a,b\rangle_n$
may be written as
$$
u=\frac{1}{2}\cdot (u+u^*) +\frac{1}{2}\cdot (u-u^*),  
$$
where $u+u^*\in\Sym{n}$ and $u-u^*\in\Asym{n}$. The dimension
formulas are direct consequences of the fact that the number of {\em
  symmetric}  $ab$-words $w$ satisfying $w=w^*$ is $2^{\lfloor
  (n+1)/2\rfloor}$ and hence the number of {\em
  asymmetric}  $ab$-words $w$ satisfying $w\neq w^*$ is $2^n-2^{\lfloor
  (n+1)/2\rfloor}$. Asymmetric $ab$-words $w$ form pairs $\{w,w^*\}$,
and we may associate to each such unordered pair a vector $w-w^*$, these
vectors form a basis of $\Asym{n}$.
\end{proof}  

Next we show the following analogue of~\cite[Proposition
  10.9]{Ehrenborg-Readdy-Tch}.  
\begin{lemma}
\label{lemma:Ulift}  
If the homogeneous $ab$-polynomial $u\in {\mathbb Q}\langle
a,b\rangle_n$ of degree $n$ is an eigenvector of the linear operator
$\II: {\mathbb Q}\langle a,b\rangle \rightarrow {\mathbb Q}\langle
a,b\rangle$ then so is the homogeneous $ab$-polynomial
$\lift(u):=(a-b)u+u(a-b)\in 
{\mathbb Q}\langle a,b\rangle_{n+1}$. Both eigenvectors have the same
eigenvalue.  
\end{lemma}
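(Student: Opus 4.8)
The plan is to prove the stronger statement that the operator $\II$ of Theorem~\ref{thm:Jojicab2} commutes with the lifting operator, i.e.\ $\II\circ\lift=\lift\circ\II$ as linear maps; the lemma is then immediate, since $\II(u)=\lambda u$ forces $\II(\lift(u))=\lift(\II(u))=\lambda\,\lift(u)$. Throughout write $e:=a-b$, so that $\lift(u)=eu+ue$, and recall from Theorem~\ref{thm:Jojicab2} that $\II(u)=u+u^{*}+\sum_u M(u_{(1)}^{*},u_{(2)})$; accordingly write $\II=\mathrm{id}+R+T$, where $R(u)=u^{*}$ and $T(u)=\sum_u M(u_{(1)}^{*},u_{(2)})$. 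It suffices to show that each of the three summands commutes with $\lift$.

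The first two summands are easy. The identity commutes with $\lift$ trivially. Since $*$ is an anti-automorphism and $e^{*}=e$, one has $\lift(u)^{*}=(eu+ue)^{*}=u^{*}e+eu^{*}=\lift(u^{*})$, so $R$ commutes with $\lift$ as well. The entire content of the lemma is therefore concentrated in the relation $T\lift=\lift T$.

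To analyze $T$, the key structural input is that $e$ is primitive for the reduced coproduct: $\Delta(e)=\Delta(a)-\Delta(b)=1\otimes 1-1\otimes 1=0$. Consequently the boundary terms $1\otimes u$ and $u\otimes 1$ that would otherwise appear cancel, and one obtains
$$
\Delta(eu)=\sum_u (eu_{(1)})\otimes u_{(2)}
\qquad\text{and}\qquad
\Delta(ue)=\sum_u u_{(1)}\otimes (u_{(2)}e).
$$
Substituting these into the definition of $T$ and using $(eu_{(1)})^{*}=u_{(1)}^{*}e$, the desired identity $T(eu+ue)=eT(u)+T(u)e$ reduces, term by term over the coproduct (with $w=u_{(1)}^{*}$ and $v=u_{(2)}$), to the single bilinear identity for the mixing operator
\begin{equation}
M(we,v)+M(w,ve)=e\,M(w,v)+M(w,v)\,e
\label{eq:Mleib}
\end{equation}
for all $ab$-words $w,v$: a Leibniz-type rule saying that right multiplication of the two arguments of $M$ by $e$ matches left-plus-right multiplication of the output by $e$.

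The main obstacle is \eqref{eq:Mleib}. I would prove it by induction on $\deg w+\deg v$ from the recursive definition of the operators $M_{r,s,n}$: the base case $w=v=1$ reduces to $M(e,1)+M(1,e)=e\,M(1,1)+M(1,1)\,e$, which one checks directly, and the inductive step exploits the fact that, because $\Delta(e)=0$, appending $e$ to either argument is compatible with the coproduct-driven recursion defining $M$. The delicate part is keeping the two halves of the recursion (insertion of $a$ from the left argument, of $b$ from the right) in step while respecting the parity constraint $n-r-s-1\equiv 0 \pmod 2$; alternatively, one can read \eqref{eq:Mleib} off the poset product formula $M(\Psi_P,\Psi_Q)=\Psi_{P\times Q}$ of~\eqref{eq:mixing}, under which right multiplication by $e$ is a rank-extension of the corresponding factor. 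Finally, the antisymmetric case of the lemma is automatic and serves as a consistency check: if $u^{*}=-u$ then $\lift(u)^{*}=-\lift(u)$, so by Corollary~\ref{cor:kernel} both $u$ and $\lift(u)$ lie in $\ker\II$, with shared eigenvalue $0$.
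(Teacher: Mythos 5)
Your argument is, in substance, the paper's own proof reorganized as the commutation statement $\II\circ\lift=\lift\circ\II$: the paper likewise just expands $\II((a-b)u+u(a-b))$ via Theorem~\ref{thm:Jojicab2} in a single display and matches the result with $(a-b)\II(u)+\II(u)(a-b)$. Your treatment of the identity and reversal summands is correct, as is the key computation $\Delta(eu)=\sum_u (eu_{(1)})\otimes u_{(2)}$ and $\Delta(ue)=\sum_u u_{(1)}\otimes (u_{(2)}e)$ coming from $\Delta(a-b)=0$. The one place you go beyond the paper is in isolating the mixing-operator identity
$$
M(we,v)+M(w,ve)=e\cdot M(w,v)+M(w,v)\cdot e, \qquad e=a-b,
$$
which is precisely the step the paper uses tacitly when it rewrites $\sum_u M(u_{(1)}^*e,u_{(2)})+\sum_u M(u_{(1)}^*,u_{(2)}e)$ as $(a-b)\sum_u M(u_{(1)}^*,u_{(2)})+\sum_u M(u_{(1)}^*,u_{(2)})(a-b)$. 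You are right that this is where all the content lies, and your base case $M(e,1)+M(1,e)=ec+ce$ does check out (both sides equal $2a^2-2b^2$); but you only sketch the induction establishing it, so your write-up is complete to exactly the same degree as the paper's. To make either version airtight one must actually prove the displayed Leibniz rule, e.g.\ by induction through the recursive definition of the $M_{r,s,n}$ or from the product formula~\eqref{eq:mixing} as you suggest. There is no wrong step and no genuinely different route; the antisymmetric consistency check at the end is a nice touch but carries no logical weight.
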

\begin{proof}
  Assume $\II(u)=\lambda\cdot u$ holds. By
  Theorem~\ref{thm:Jojicab2} we may write 
\begin{align*}
  \II((a-b)u+u(a-b))&=(a-b)u+u^*(a-b)+u(a-b)+(a-b)u^*\\
  &+(a-b)\sum_u M(u_{(1)}^*,u_{(2)})+\sum_u M(u_{(1)}^*,u_{(2)})(a-b)\\
  &=(a-b)\II(u)+\II(u)(a-b)=\lambda\cdot ((a-b)u+u(a-b)). 
\end{align*}    
\end{proof}
Note that the restriction of the operator ${\mathcal L}$ to $\Sym{n}$
takes $\Sym{n}$ into $\Sym{n+1}$. The analogue of~\cite[Proposition
  10.8]{Ehrenborg-Readdy-Tch} may be stated in more general terms, as follows.
\begin{proposition}
  \label{prop:Uprod}
  For any pair of graded partially ordered sets $P$ and $Q$,
  $$
\II(P\times Q)=\II(P) (\times) \II(Q) \quad\mbox{holds.}
$$
Here $\II(P) (\times) \II(Q)$ denotes the multiset of posets
$\{P_1\times Q_1\::\: P_1\in \II(P), Q_1\in \II(Q)\}$. 
\end{proposition}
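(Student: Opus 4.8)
The plan is to reduce the statement to the isomorphism $I(P\times Q)\cong I(P)\times I(Q)$ recorded in Proposition~\ref{prop:cprod}. Recall that for a graded poset $P$ the interval transform of the second kind $\II(P)$ is, by Corollary~\ref{cor:iit2}, the multiset whose member indexed by $x\in P$ is the subposet of $I(P)$ consisting of all intervals that contain the singleton $[x,x]$. I would therefore first make the isomorphism of Proposition~\ref{prop:cprod} explicit: it sends an interval $[(u_1,u_2),(v_1,v_2)]$ of $P\times Q$ to the pair $([u_1,v_1],[u_2,v_2])\in I(P)\times I(Q)$, since the interval $[(u_1,u_2),(v_1,v_2)]$ is exactly the product $[u_1,v_1]\times[u_2,v_2]$.

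Under this identification the singleton interval $[(x,y),(x,y)]$ corresponds to the pair of singletons $([x,x],[y,y])$. The key observation, which I would verify next, is that a pair $([a,b],[c,d])\in I(P)\times I(Q)$ contains $([x,x],[y,y])$ in the product order precisely when $[a,b]\supseteq[x,x]$ holds in $I(P)$ and $[c,d]\supseteq[y,y]$ holds in $I(Q)$; this is just the definition of the product order together with the fact that inclusion of intervals is the order relation in both $I(P)$ and $I(Q)$. Consequently the subposet of $I(P\times Q)$ of all intervals containing $[(x,y),(x,y)]$ is carried by the isomorphism onto the product of the subposet of $I(P)$ of intervals containing $[x,x]$ with the subposet of $I(Q)$ of intervals containing $[y,y]$. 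This says exactly that the member of $\II(P\times Q)$ indexed by $(x,y)$ is isomorphic to the product of the member of $\II(P)$ indexed by $x$ with the member of $\II(Q)$ indexed by $y$, and since the index set $P\times Q$ of the left-hand side corresponds bijectively to the pairs $(x,y)$ with $x\in P$, $y\in Q$ indexing the right-hand side, the two multisets coincide.

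The argument is essentially bookkeeping once Proposition~\ref{prop:cprod} is in hand, so I do not expect a genuine obstacle; the one point needing care is to keep the multiset structure straight, i.e.\ to check that the bijection of index sets matches members \emph{up to isomorphism}, rather than merely establishing that the same posets appear. As a consistency check one can derive the claim directly from Proposition~\ref{prop:i2prod} instead: the member of $\II(P\times Q)$ indexed by $(x,y)$ is $[\0,(x,y)]^*\times[(x,y),\1]$, and factoring $[\0,(x,y)]=[\0_P,x]\times[\0_Q,y]$ and $[(x,y),\1]=[x,\1_P]\times[y,\1_Q]$, together with the fact that the dual of a product is the product of the duals, recovers $([\0_P,x]^*\times[x,\1_P])\times([\0_Q,y]^*\times[y,\1_Q])$ after reordering factors, which is precisely the member of $\II(P)(\times)\II(Q)$ indexed by $(x,y)$.
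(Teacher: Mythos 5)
Your proposal is correct and follows essentially the same route as the paper: the paper's proof also identifies the member of $\II(P\times Q)$ indexed by $(p,q)$ with the interval from the singleton $[(p,q),(p,q)]$ up to the maximum of $I(P\times Q)$ and invokes the ``obvious isomorphism'' of that interval with the product of the corresponding intervals in $I(P)$ and $I(Q)$ --- exactly the isomorphism you make explicit via Proposition~\ref{prop:cprod}. Your spelled-out verification and the consistency check via Proposition~\ref{prop:i2prod} are both sound.
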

\begin{proof}
The set $\II(P\times Q)$ is the multiset of all intervals of $P\times
Q$, of the form
$$\left[\left[(p,q),(p,q)\right], \left[(\0_P,\0_Q),(\1_P,\1_Q)\right]\right]$$
ordered by inclusion. Here $p$ ranges over all elements of $P$ and $Q$
independently ranges over all elements of $Q$.
The statement follows from the obvious isomorphism 
$$\left[\left[(p,q),(p,q)\right], \left[(\0_P,\0_Q),(\1_P,\1_Q)\right]\right]\cong
\left[\left[p,p\right],\left[\0_P,\1_P\right]\right]\times \left[\left[q,q\right],\left[\0_Q,\1_Q\right]\right].
$$
\end{proof}  

\begin{corollary}
\label{cor:Uprod}  
Assume the homogeneous $ab$-polynomial $u_i$ is an
eigenvector with eigenvalue $\lambda_i$ of the interval transform of the
second kind $\II$ for $i = 1, 2$. Then $M(u_1, u_2)$ is an eigenvector with 
eigenvalue $\lambda_1\cdot \lambda_2$. 
\end{corollary}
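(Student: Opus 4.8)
The plan is to reduce everything to a single intertwining identity for the mixing operator $M$ and the interval transform of the second kind, namely
\begin{equation}
\label{eq:intertwine}
\II(M(v,w))=M(\II(v),\II(w))\qquad\text{for all }v,w\in{\mathbb Q}\langle a,b\rangle.
\end{equation}
Granting \eqref{eq:intertwine}, the corollary is immediate: if $\II(u_i)=\lambda_i u_i$ for $i=1,2$, then by the bilinearity of $M$ we obtain $\II(M(u_1,u_2))=M(\lambda_1 u_1,\lambda_2 u_2)=\lambda_1\lambda_2\,M(u_1,u_2)$, so $M(u_1,u_2)$ is an eigenvector of $\II$ with eigenvalue $\lambda_1\cdot\lambda_2$. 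Thus the entire content of the statement is the operator identity \eqref{eq:intertwine}.

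To establish \eqref{eq:intertwine} I would first verify it on the $ab$-indices of graded posets, where it is essentially a repackaging of Proposition~\ref{prop:Uprod}. Fix graded posets $P$ and $Q$. The operator $\II$ applied to $\Psi_{P\times Q}$ computes, by Theorem~\ref{thm:Jojicab2} and Definition~\ref{def:grII}, the total $ab$-index of the multiset $\II(P\times Q)$; by Proposition~\ref{prop:Uprod} this multiset equals $\II(P)\,(\times)\,\II(Q)$, and since the total $ab$-index is additive over multisets, formula \eqref{eq:mixing} together with the bilinearity of $M$ gives
\begin{equation*}
\II(\Psi_{P\times Q})=\sum_{P_1\in\II(P)}\sum_{Q_1\in\II(Q)}M(\Psi_{P_1},\Psi_{Q_1})
=M\Big(\sum_{P_1}\Psi_{P_1},\ \sum_{Q_1}\Psi_{Q_1}\Big)=M(\II(\Psi_P),\II(\Psi_Q)).
\end{equation*}
On the other hand \eqref{eq:mixing} also gives $\Psi_{P\times Q}=M(\Psi_P,\Psi_Q)$, so the left-hand side above is exactly $\II(M(\Psi_P,\Psi_Q))$. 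Comparing the two expressions proves \eqref{eq:intertwine} whenever $v=\Psi_P$ and $w=\Psi_Q$ are $ab$-indices of graded posets.

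The main obstacle is the passage from this special case to arbitrary $ab$-polynomials, since an eigenvector of $\II$ need not be the $ab$-index of any poset. For this I would invoke the fact that, for each $n$, the $ab$-indices of graded posets of rank $n+1$ linearly span ${\mathbb Q}\langle a,b\rangle_n$: the flag $f$-vectors of \emph{general} graded posets satisfy no linear relation beyond the normalization $f_{\emptyset}=1$ (in contrast with the Eulerian case governed by the Bayer--Billera relations), so together with the fact that every $\Psi_P$ has $a^n$-coefficient equal to $1$, the $ab$-indices span the whole space. Since both sides of \eqref{eq:intertwine} are bilinear in $(v,w)$, the identity then extends from this spanning set to all of ${\mathbb Q}\langle a,b\rangle\times{\mathbb Q}\langle a,b\rangle$, completing the proof. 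If one prefers a self-contained argument avoiding the spanning fact, one can instead derive \eqref{eq:intertwine} directly from the closed formula $\II(u)=u+u^*+\sum_u M(u_{(1)}^*,u_{(2)})$ of Theorem~\ref{thm:Jojicab2} by expanding $M$ via its coproduct recursion and tracking the reversal $u\mapsto u^*$; here I expect the delicate point to be the bookkeeping of the $\sum_u$ terms as they are fed into $M$.
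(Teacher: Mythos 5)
Your proof is correct and follows essentially the same route the paper intends: Corollary~\ref{cor:Uprod} is stated there without an explicit proof, as a direct consequence of Proposition~\ref{prop:Uprod} together with the mixing formula~\eqref{eq:mixing}, which is exactly the intertwining identity $\II(M(v,w))=M(\II(v),\II(w))$ that you isolate. The one step the paper leaves implicit --- passing from $ab$-indices of actual posets to arbitrary elements of ${\mathbb Q}\langle a,b\rangle_n$ --- you handle correctly via the standard fact that the flag $f$-vectors of graded posets of rank $n+1$ linearly span ${\mathbb Q}^{2^n}$ (e.g.\ the posets obtained from a chain by doubling the elements at ranks in $S$ have flag $f$-vectors $f_T=2^{|S\cap T|}$, forming an invertible matrix), after which bilinearity of both sides finishes the argument.
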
  
As a consequence of~\cite[Corollary 4.3]{Ehrenborg-Fox}, if $u_i\in \Sym{n_i}$
holds for $i=1,2$ then  $M(u_1,u_2)\in \Sym{n_1+n_2}$.

In~\cite{Ehrenborg-Readdy-Tch} find all eigenvalues and eigenvectors of
the Tchebyshev operator of the second kind, by repeated use of the
analogues of Lemma~\ref{lemma:Ulift} and
Proposition~\ref{prop:Uprod}. More precisely, \cite[Theorem
  10.10]{Ehrenborg-Readdy-Tch} states that a basis of eigenvectors may
be generated by repeatedly using the pyramid operator
$\operatorname{Pyr}: u\mapsto M(1,u)$ and their variant of the lifting
operator $\lift$ which sends $u$ into $(a-b)u$. A key ingredient of
their proof is the use of the fact that the intersection of the ranges
of the pyramid operator and their lifting operator is zero. In our case
this is not true any more, furthermore our operator $\II$ has a
nontrivial kernel. That said, we make the following conjectures.
\begin{conjecture}
For each $n\geq 1$, the kernel of $\II: {\mathbb Q}\langle
a,b\rangle_n\rightarrow {\mathbb Q}\langle a,b\rangle_{n+1}$ is $\Asym{n}$.
\end{conjecture}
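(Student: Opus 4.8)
The plan is to reduce the conjecture to an injectivity statement on the symmetric part, and then to attack that statement with the eigenvector machinery of Lemma~\ref{lemma:Ulift} and Corollary~\ref{cor:Uprod}.

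First I would establish the identity $\II(u^*)=\II(u)$ for all homogeneous $u$. Writing $\Delta(u)=\sum_u u_{(1)}\otimes u_{(2)}$, a direct inspection of the definitions of $\Delta$ and of the reversal gives the co-antipode relation $\Delta(u^*)=\sum_u u_{(2)}^*\otimes u_{(1)}^*$. Feeding this into the formula of Theorem~\ref{thm:Jojicab2} and using that the mixing operator is symmetric, $M(p,q)=M(q,p)$ (which follows from $P\times Q\cong Q\times P$ together with Equation~(\ref{eq:mixing}), and can also be read off the recursion), yields $\II(u^*)=\II(u)$. Combined with the decomposition ${\mathbb Q}\langle a,b\rangle_n=\Sym{n}\oplus\Asym{n}$ and with Corollary~\ref{cor:kernel}, which already gives $\Asym{n}\subseteq\ker\II$, this proves $\ker\II=\Asym{n}\oplus(\Sym{n}\cap\ker\II)$. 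Thus the conjecture is equivalent to the assertion that $\II$ is injective on $\Sym{n}$.

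To prove injectivity on $\Sym{n}$ it suffices to produce a spanning set of $\Sym{n}$ consisting of eigenvectors of $\II$ with nonzero eigenvalues. The empty word satisfies $\II(1)=2\cdot 1$, so $1\in\Sym{0}$ is an eigenvector of eigenvalue $2$. By Corollary~\ref{cor:Uprod} the pyramid operator $\operatorname{Pyr}(u)=M(1,u)$ multiplies the eigenvalue by $2$ and, by the consequence of~\cite[Corollary 4.3]{Ehrenborg-Fox} stated just after Corollary~\ref{cor:Uprod}, preserves symmetry; by Lemma~\ref{lemma:Ulift} the operator $\lift(u)=(a-b)u+u(a-b)$ preserves the eigenvalue and, since $(a-b)^*=a-b$, preserves symmetry; both raise the degree by one. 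Hence every $ab$-polynomial obtained from $1$ by a word in $\operatorname{Pyr}$ and $\lift$ is a symmetric eigenvector of $\II$ whose eigenvalue is a positive power of $2$, in particular nonzero. Since eigenspaces for distinct eigenvalues are independent, once these vectors are shown to span $\Sym{n}$ the scalar $0$ cannot be an eigenvalue of $\II$ on $\Sym{n}$, and injectivity follows.

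The spanning claim is the main obstacle, and it is exactly the point at which the proof of~\cite[Theorem 10.10]{Ehrenborg-Readdy-Tch} fails to carry over. By induction on $n$ it reduces to the surjectivity of the map $(u,v)\mapsto\operatorname{Pyr}(u)+\lift(v)$ from $\Sym{n-1}\oplus\Sym{n-1}$ onto $\Sym{n}$. In~\cite{Ehrenborg-Readdy-Tch} the corresponding surjectivity was a dimension count enabled by the vanishing of the intersection of the two ranges; as already remarked in the text, here $\operatorname{Pyr}(\Sym{n-1})\cap\lift(\Sym{n-1})\neq 0$, so one must instead compute this intersection exactly. Assuming $\operatorname{Pyr}$ and $\lift$ are injective on $\Sym{n-1}$ (which I would verify separately by a leading-term argument), spanning is equivalent to $\dim\bigl(\operatorname{Pyr}(\Sym{n-1})\cap\lift(\Sym{n-1})\bigr)=2^{\lfloor n/2\rfloor}-2^{\lfloor(n-1)/2\rfloor}$, which is $0$ for odd $n$. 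Determining this overlap requires understanding precisely which words occur in $M(u_{(1)}^*,u_{(2)})$, and controlling them under a monomial order adapted to the alternating pattern produced by $M$ is where I expect the real work to lie. As a possibly cleaner alternative, the cases $n=1,2$ exhibit $\II$ on $\Sym{n}$ with a symmetric, positive definite matrix in the symmetrization basis, with $\II-2\,\mathrm{id}$ positive semidefinite; since $\II(u)-2u=\sum_u M(u_{(1)}^*,u_{(2)})$ on $\Sym{n}$ is exactly the contribution of the interior terms $\0<x<\1$ of $\grII(P)$, proving that this operator is positive semidefinite would give injectivity immediately and seems to me the most promising route.
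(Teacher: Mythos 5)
First, a point of calibration: the paper does not prove this statement --- it is explicitly offered as a conjecture, so there is no proof of record to compare against, and your proposal must stand on its own. Its first half does stand. The identity $\II(u^*)=\II(u)$ follows from Theorem~\ref{thm:Jojicab2} together with $\Delta(u^*)=\sum_u u_{(2)}^*\otimes u_{(1)}^*$ and the symmetry of the mixing operator, and combined with Corollary~\ref{cor:kernel} and the decomposition ${\mathbb Q}\langle a,b\rangle_n=\Sym{n}\oplus\Asym{n}$ it correctly reduces the conjecture to the injectivity of $\II$ on $\Sym{n}$. The eigenvalue bookkeeping is also right: $\II(1)=2$, the pyramid operator doubles the eigenvalue by Corollary~\ref{cor:Uprod}, the lifting operator $\lift$ preserves it by Lemma~\ref{lemma:Ulift}, both preserve symmetry, so every composition applied to $1$ yields a symmetric eigenvector whose eigenvalue is a positive power of $2$.

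The gap is that the step you yourself label ``the main obstacle'' --- that these eigenvectors span $\Sym{n}$ --- is precisely the paper's \emph{second} conjecture, stated immediately after this one. What you have actually proved is an implication between two open conjectures (spanning implies the kernel statement), not the kernel statement. Neither of your proposed routes closes this: the computation of $\operatorname{Pyr}(\Sym{n-1})\cap\lift(\Sym{n-1})$ is only set up as a target dimension, resting on an injectivity claim for $\operatorname{Pyr}$ and $\lift$ on $\Sym{n-1}$ that is deferred to an unwritten leading-term argument, and the positive-semidefiniteness of $u\mapsto\sum_u M(u_{(1)}^*,u_{(2)})$ on $\Sym{n}$ is checked only for $n=1,2$. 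The combinatorial core of the problem --- controlling which $ab$-words the mixing operator actually produces --- is untouched, so the proposal is a useful reduction but not a proof.
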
  

\begin{conjecture}
For each $n\geq 1$ a generating set of $\Sym{n}$, consisting of
eigenvectors only may be found by taking all possible $n$-fold compositions of
the pyramid operator $\operatorname{Pyr}$ and of the lift operator
$\lift$, and applying it to $1$.  
\end{conjecture}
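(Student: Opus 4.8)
The plan is to split the statement into an easy part, that every such composition is an eigenvector lying in $\Sym{n}$, and a hard part, that these eigenvectors span $\Sym{n}$. For the easy part, first observe that the empty word is an eigenvector of $\II$ with eigenvalue $2$: by Theorem~\ref{thm:Jojicab2} the coproduct term vanishes on $1$, so $\II(1)=1+1^{*}=2$. Since $\operatorname{Pyr}(u)=M(1,u)$, Corollary~\ref{cor:Uprod} shows that the pyramid operator multiplies the eigenvalue by $2$, while Lemma~\ref{lemma:Ulift} shows that $\lift$ preserves it; both operators also preserve symmetry. Hence an $n$-fold composition of $\operatorname{Pyr}$ and $\lift$ using exactly $j$ copies of $\operatorname{Pyr}$, applied to $1$, is an eigenvector in $\Sym{n}$ with eigenvalue $2^{j+1}$.

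For the spanning I would argue by induction on $n$. Let $W_{j}^{(n)}$ denote the span of the length-$n$ compositions using exactly $j$ copies of $\operatorname{Pyr}$, and let $V_{\lambda}$ denote the $\lambda$-eigenspace of $\II$ in the relevant degree. The recursive description of the compositions gives $W_{j}^{(n+1)}=\operatorname{Pyr}\!\big(W_{j-1}^{(n)}\big)+\lift\!\big(W_{j}^{(n)}\big)$, and since eigenvectors with distinct eigenvalues are independent, the span of all length-$n$ compositions is $\bigoplus_{j}W_{j}^{(n)}$ with $W_{j}^{(n)}\subseteq V_{2^{j+1}}$. The conjecture therefore reduces to two claims: (a) that $\II$ acts semisimply on $\Sym{n}$ with spectrum exactly $\{2,4,\dots,2^{n+1}\}$, so that $\Sym{n}=\bigoplus_{j}V_{2^{j+1}}$; and (b) that the inductive step $\operatorname{Pyr}(V_{2^{j}})+\lift(V_{2^{j+1}})=V_{2^{j+1}}$ holds in each degree, with the two summands living in degree $n$ and the target in degree $n+1$.

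Claim (b) becomes a dimension count once one controls the two operators. First I would establish that $\operatorname{Pyr}$ and $\lift$ are injective on $\Sym{n}$; injectivity of $\lift$ is elementary (the relation $\lift(u)=0$ forces all coefficients of $u$ to vanish by a boundary recurrence), and injectivity of $\operatorname{Pyr}$ should follow from an explicit analysis of $M(1,\cdot)$. Granting this, the equality $\dim\Sym{n+1}=2^{n}+2^{\lfloor n/2\rfloor}$ together with $\dim\Sym{n}=2^{n-1}+2^{\lfloor(n-1)/2\rfloor}$ forces the ranges to satisfy $\dim\big(\operatorname{im}\operatorname{Pyr}\cap\operatorname{im}\lift\big)=0$ when $n$ is even and $=2^{(n-1)/2}$ when $n$ is odd. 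Thus the real content is an exact, eigenvalue-refined description of the intersection of the two ranges.

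This intersection is precisely where the Ehrenborg--Readdy argument for the Tchebyshev operator of the second kind does not transfer: in their setting the ranges of the pyramid and lift operators meet only in $0$, whereas here they overlap in odd degrees, and it is this overlap (already visible at $n=1$, where $\operatorname{Pyr}\lift(1)$ and $\lift\operatorname{Pyr}(1)$ become parallel in degree $2$) that I expect to be the main obstacle. I would attack both the overlap in (b) and the semisimplicity in (a) by direct computation with the mixing operator, using the weighted lattice-path model for the values $M(c^{i},c^{j})$ to write the overlap relations among compositions explicitly and to diagonalize $\II$. The preceding conjecture that $\ker\II=\Asym{n}$, equivalently that $\II$ is injective on $\Sym{n}$, is almost surely a prerequisite, since without it one could not exclude the eigenvalue $0$ from the spectrum in (a) nor rule out a nilpotent obstruction to semisimplicity.
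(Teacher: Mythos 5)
This statement is left as an open conjecture in the paper --- the author explicitly explains why the Ehrenborg--Readdy argument for \cite[Theorem 10.10]{Ehrenborg-Readdy-Tch} breaks down here (the ranges of $\operatorname{Pyr}$ and $\lift$ no longer intersect trivially, and $\II$ has a nontrivial kernel) and offers no proof. So there is no paper proof to compare against, and your proposal must stand on its own. Its first part is sound and matches the evidence assembled in Section~\ref{sec:t2}: $\II(1)=2$ since the coproduct of the empty word vanishes, Corollary~\ref{cor:Uprod} with $u_1=1$ shows $\operatorname{Pyr}$ doubles the eigenvalue, Lemma~\ref{lemma:Ulift} shows $\lift$ preserves it, and both operators preserve symmetry (the latter via \cite[Corollary 4.3]{Ehrenborg-Fox}). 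So every $n$-fold composition applied to $1$ is indeed an eigenvector in $\Sym{n}$ with eigenvalue $2^{j+1}$, where $j$ counts the occurrences of $\operatorname{Pyr}$; this is consistent with Equation~\eqref{eq:eigenvector}.

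The gap is that everything needed for spanning is deferred rather than proved. Your claim (a) --- that $\II$ acts semisimply on $\Sym{n}$ with spectrum exactly $\{2,4,\dots,2^{n+1}\}$ --- is asserted, not established; nothing in the paper rules out eigenvalues outside this list, a nontrivial Jordan block, or a kernel meeting $\Sym{n}$ (excluding the last is precisely the preceding, also unproven, conjecture). Your dimension count for $\operatorname{im}\operatorname{Pyr}\cap\operatorname{im}\lift$ is circular as written: the values $0$ (for $n$ even) and $2^{(n-1)/2}$ (for $n$ odd) are what the intersection \emph{would have to be} if the compositions already spanned $\Sym{n+1}$, so they cannot be used as an input to prove spanning. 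Likewise the injectivity of $\operatorname{Pyr}$ on $\Sym{n}$ is promised ``from an explicit analysis of $M(1,\cdot)$'' but not carried out. What you have written is a correct and useful reduction of the conjecture to claims (a) and (b), together with an accurate diagnosis of why the known template fails (the overlap of the two ranges in odd degrees, visible already for $\operatorname{Pyr}\lift(1)$ versus $\lift\operatorname{Pyr}(1)$), but it is a research plan, not a proof: the central combinatorial work with the mixing operator that would settle (a) and (b) remains to be done.
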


\section{The graded poset of intervals of an Eulerian poset}
\label{sec:Eulerian}

The following result is due to C.\ Athanasiadis~\cite[Proposition
  2.5]{Athanasiadis}, it is also stated in a special case by
Joji\'{c}~\cite[Remark 10]{Jojic}.   
\begin{proposition}
  \label{prop:Eulerian}
If a graded poset $P$ is Eulerian then the same holds for the graded
poset of its intervals $\grI(P)$.
\end{proposition}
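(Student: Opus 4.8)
The plan is to verify the definition directly: I would show that \emph{every} nontrivial interval of $\grI(P)$ has as many elements of even rank as of odd rank, equivalently that the alternating sum of $(-1)^{\mathrm{rank}}$ over each such interval vanishes. The first step is to classify the intervals of $\grI(P)$ according to whether their bottom element is the empty interval $\emptyset$. An interval $[[u,v],[u',v']]$ with $[u,v]\neq\emptyset$ consists of all intervals $[x,y]$ of $P$ with $u'\le x\le u$ and $v\le y\le v'$; sending $[x,y]$ to the pair $(x,y)$ gives a rank-preserving isomorphism onto the product $[u',u]^*\times[v,v']$, in exact analogy with Proposition~\ref{prop:i2prod}.

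For these product intervals the conclusion is quick. Both $[u',u]$ and $[v,v']$ are intervals of the Eulerian poset $P$, hence Eulerian, and both the dual of an Eulerian poset and the product of two Eulerian posets are Eulerian (classical facts). Since the interval $[[u,v],[u',v']]$ is nontrivial, at least one of the two factors is nontrivial; its alternating rank-sum is then $0$, and because the alternating rank-sum of a product factors as the product of the alternating rank-sums of the factors, the whole interval is balanced.

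The remaining intervals are those of the form $[\emptyset,[u,v]]$, which is precisely $\grI([u,v])$ with $[u,v]$ regarded as an Eulerian subposet of $P$. Here I would carry out the one genuine computation. Using that a nonempty interval $[x,y]$ has rank $\rho(y)-\rho(x)+1$ in $\grI(P)$ while $\emptyset$ has rank $0$, the alternating rank-sum over $[\emptyset,[u,v]]$ equals $1-\sum_{x\le y\ \mathrm{in}\ [u,v]}(-1)^{\rho(y)-\rho(x)}$. I would evaluate the double sum by fixing $x$ and summing over $y\in[x,v]$: for $x<v$ the interval $[x,v]$ is a nontrivial Eulerian interval of $P$ and contributes $0$, whereas $x=v$ contributes $1$, so the double sum telescopes to $1$ and the whole expression is $1-1=0$.

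Combining the two cases shows that every nontrivial interval of $\grI(P)$ is balanced, which is exactly the definition of Eulerian; notably no induction on rank is needed, since the balance of each interval is checked in isolation. The step I expect to be the crux is the telescoping identity $\sum_{x\le y}(-1)^{\rho(y)-\rho(x)}=1$, because it is precisely here that one uses the hypothesis that \emph{every} proper interval $[x,v]$ of $P$ is Eulerian, not merely that $P$ itself is balanced. As a fallback I would keep the algebraic route via the upsilon invariant: by the opening corollary, $P$ is Eulerian iff $\Upsilon_P(a,b)$ lies in the subalgebra generated by $c=a+2b$ and $d=ab+ba+2b^2$, so one could instead prove that the operator $\iab$ of Theorem~\ref{thm:itransf} maps this subalgebra into itself; this is conceptually clean but demands more bookkeeping with the recursion in Theorem~\ref{thm:itransf}.
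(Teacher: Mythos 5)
Your proof is correct, but it takes a genuinely different route from the paper's. The paper disposes of the statement in three lines by topology: by Philip Hall's theorem, being Eulerian is equivalent to every open interval's order complex having reduced Euler characteristic $(-1)^{\rho(v)-\rho(u)}$, and since passing to $\grI(P)$ replaces each such order complex by a triangulation of its suspension (Proposition~\ref{prop:ipgraded}), the Euler characteristics come out right. You instead verify the definition interval by interval, and your case analysis is complete and sound: every nontrivial interval of $\grI(P)$ either has nonempty bottom element, in which case it is isomorphic to a product $[u',u]^*\times[v,v']$ of intervals of $P$ (the same observation as Proposition~\ref{prop:i2prod}) and the alternating rank-sum factors, or it is of the form $[\emptyset,[u,v]]$, where your telescoping identity $\sum_{u\le x\le y\le v}(-1)^{\rho(y)-\rho(x)}=1$ does the work --- and you correctly identify that this is exactly where the Eulerian hypothesis on the proper intervals $[x,v]$ enters. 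What the paper's argument buys is brevity and a conceptual explanation (the statement is ``really'' about suspensions preserving the M\"obius-function condition); what yours buys is self-containedness --- it needs neither Hall's theorem nor the triangulation result, only the rank formula for $\grI(P)$ and the classical facts about duals and products, and it makes visible precisely which intervals of $P$ must be balanced for each interval of $\grI(P)$ to be balanced. Your fallback via the operator $\iab$ and the $(c,d)=(a+2b,\,ab+ba+2b^2)$ subalgebra would also work but is indeed heavier bookkeeping; neither the paper nor your main argument needs it.
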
  
Indeed, it is well known consequence of Phillip Hall's theorem
(see~\cite[Propositition 3.8.5]{Stanley-EC1}) that a graded poset is
Eulerian if and only if the reduced characteristic of the order complex
of each open interval $(u,v)$ is $(-1)^{\rho(v)-\rho(u)}$ where $\rho$
is the rank function. Since taking the graded poset of intervals results
in taking a triangulation of the suspension of each such order complex,
the reduced Euler characteristic remains unchanged.

As a consequence of Proposition~\ref{prop:Eulerian}, the linear map $I_n$
takes the flag $f$-vector of any graded Eulerian poset of rank $n+1$
into the flag $f$-vector of a graded Eulerian poset of rank $n+2$. As a
direct consequence of Theorem~\ref{thm:Jojicab} the following formulas 
hold, see~\cite[Corollary 7]{Jojic}:
\begin{corollary}[Joji\'{c}]
\label{cor:Jojiccd}
Given a graded poset $P$ of rank $n+1$, we have
$\Psi_{\grI(P)}(c,d)=\Iab(\Psi_{P}(c,d))$, where the linear operator
$\Iab: {\mathbb Q}\langle c,d\rangle \rightarrow {\mathbb Q}\langle c,d\rangle$
is defined by the following recursive formulas on the basis of
$ab$-words:
\begin{align*}
\Iab(u\cdot c)&=\Iab(u)\cdot c+2d\cdot u^* +\sum_{u}
\Iab(u_{(2)})\cdot c\cdot u_{(1)}^*\\
\Iab(u\cdot d)&=\Iab(u)\cdot d+(dc+cd)\cdot u^*+d\cdot u^*\cdot c\\
&+\sum_{u}
(\Iab(u_{(2)})\cdot d\cdot \operatorname{Pyr}(u_{(1)}^*)+d\cdot
u_{(2)}^*\cdot d\cdot u_{(1)}^*).
\end{align*}  
Here $\operatorname{Pyr}$ is the the linear operator defined by
Ehrenborg and Readdy~\cite{Ehrenborg-Readdy-cop} associating to the $cd$
index of each poset $P$ the $cd$-index of $P\times B_1$ where $B_1$ is
the Boolean algebra of rank $1$. 
\end{corollary}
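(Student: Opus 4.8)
The plan is to obtain both $cd$-recursions as formal consequences of the single $ab$-recursion of Theorem~\ref{thm:Jojicab}, by restricting the operator $\Iab$ to the subalgebra ${\mathbb Q}\langle c,d\rangle$ and rewriting everything through $c=a+b$ and $d=ab+ba$. First I would check that this restriction is legitimate. Conceptually this is Proposition~\ref{prop:Eulerian}: since $\grI(P)$ is Eulerian whenever $P$ is, $\Iab(\Psi_P)=\Psi_{\grI(P)}$ is a $cd$-polynomial whenever $\Psi_P$ is. Computationally it follows from one fact that I would prove by induction on word length: the deletion coproduct $\Delta$ of Theorem~\ref{thm:Jojicab} carries each $cd$-word into ${\mathbb Q}\langle c,d\rangle\otimes{\mathbb Q}\langle c,d\rangle$. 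Indeed $\Delta(c)=2(1\otimes 1)$ and $\Delta(d)=1\otimes c+c\otimes 1$ already lie there, and the Newtonian product rule $\Delta(wv)=\sum_w w_{(1)}\otimes w_{(2)}v+\sum_v wv_{(1)}\otimes v_{(2)}$ propagates the property to all $cd$-words. Together with the base value $\Iab(1)=c$, this guarantees that applying the $ab$-recursion to a $cd$-word never produces a monomial outside ${\mathbb Q}\langle c,d\rangle$, so it suffices to compute $\Iab(uc)$ and $\Iab(ud)$ for a $cd$-word $u$ and to collect the answer in the $c,d$ basis.

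The $c$-recursion is the easy half. Using linearity and $uc=ua+ub$, I would add the two formulas of Theorem~\ref{thm:Jojicab}: the leading terms give $\Iab(u)(a+b)=\Iab(u)c$, the two copies of $(ab+ba)u^{*}$ give $2d\,u^{*}$, and the two coproduct sums merge through $ab+ba=d$ into a single sum $\sum_u\Iab(u_{(2)})\,d\,u_{(1)}^{*}$. This yields the stated recursion for $\Iab(u\cdot c)$, the middle factor $d$ arising as $ab+ba$, and a one-line check on $u=1$ (returning $\Iab(c)=c^{2}+2d$) confirms the normalization.

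The $d$-recursion is the substantive half. Here I would write $ud=u\cdot ab+u\cdot ba=(ua)\cdot b+(ub)\cdot a$ and apply Theorem~\ref{thm:Jojicab} a \emph{second} time to $\Iab\big((ua)b\big)$ and $\Iab\big((ub)a\big)$. The product rule for $\Delta$ expresses $\Delta(ua)$ and $\Delta(ub)$ in terms of $\Delta(u)$ plus one extra boundary term $u\otimes 1$; substituting these and regrouping, I would sort the resulting monomials according to which interval occupies the two newly created top ranks, exactly the trichotomy $[\0,z_k]$, $[z_1,\1]$, $[z_1,z_k]$ that drives recurrence~(3) of Theorem~\ref{thm:itransf}. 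The boundary contributions collapse to $\Iab(u)d+(dc+cd)u^{*}+d\,u^{*}c$; the ``both sides extend'' case yields the two-sided term $\sum_u d\,u_{(2)}^{*}\,d\,u_{(1)}^{*}$; and the one-sided case yields the mixed term. The appearance of $\operatorname{Pyr}$ is the crux of this case: the independent ``include or omit'' choice attached to each interval $[x_j,\1]$ — the factor $a+b=c$ isolated in equation~\eqref{eq:ub} — is exactly the binary freedom contributed by a Cartesian factor $B_1$, so summing these choices along the lower chain repackages them as $\operatorname{Pyr}(u_{(1)}^{*})$, the $cd$-index of $(\,\cdot\,)\times B_1$.

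I expect the bookkeeping of this last case to be the main obstacle. After two applications of the recursion one is confronted with a long $ab$-polynomial, and the real work is to verify that its reassembly into the $c,d$ basis is term-for-term the claimed combination — in particular that the binary-choice contributions aggregate into $\operatorname{Pyr}(u_{(1)}^{*})$ and not into some other degree-matching $cd$-expression. I would tame this by first proving an auxiliary identity rewriting $\Iab(u_{(2)}a)$ and $\Iab(u_{(2)}b)$ back in terms of $\Iab(u_{(2)})$ and $\operatorname{Pyr}$, so that the double application telescopes; the closure fact established in the first paragraph then acts as a safety net, since it guarantees in advance that every non-$cd$ monomial must cancel, turning the final matching into a finite formal verification rather than an identity to be discovered from scratch.
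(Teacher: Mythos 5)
Your route is the one the paper intends: the paper itself supplies no argument beyond the phrase ``direct consequence of Theorem~\ref{thm:Jojicab}'', so specializing the $ab$-recursions to the subalgebra ${\mathbb Q}\langle c,d\rangle$ via $uc=ua+ub$ and $ud=(ua)b+(ub)a$, after checking that the deletion coproduct preserves $cd$-polynomials, is exactly the derivation being gestured at, and your closure argument ($\Delta(c)=2(1\otimes 1)$, $\Delta(d)=1\otimes c+c\otimes 1$, the Newtonian condition, and $\Iab(1)=c$) is the right justification for restricting to ${\mathbb Q}\langle c,d\rangle$.

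There is, however, a concrete mismatch in the first recursion that you assert away rather than confront. Adding the two formulas of Theorem~\ref{thm:Jojicab} gives, as you correctly compute, $\Iab(uc)=\Iab(u)\cdot c+2d\cdot u^*+\sum_u\Iab(u_{(2)})\cdot d\cdot u_{(1)}^*$, with $d=ab+ba$ as the middle factor of the sum; the statement you are proving prints $c$ there. You claim your computation ``yields the stated recursion'' --- it does not, and a reader would stop you here. A degree count settles which side is right: $\Iab$ raises degree by one and the deletion coproduct lowers total degree by one, so for $u$ homogeneous of degree $n$ the term $\sum_u\Iab(u_{(2)})\cdot c\cdot u_{(1)}^*$ has degree $n+1$ while $\Iab(uc)$ and all its other summands have degree $n+2$. (Concretely, with $u=c$ your version gives $\Iab(c^2)=c^3+4dc+2cd$, which is homogeneous and matches Joji\'{c}'s coefficient formula for $\Iab(c^n)$, whereas the printed version produces the inhomogeneous $c^3+2dc+2c^2$.) So your derivation is the correct one and the printed middle factor is a typo, but a proof must say this explicitly rather than declare agreement with a formula it contradicts. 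Separately, for the $d$-recursion your decomposition of the double application into boundary terms, a two-sided term $\sum_u d\,u_{(2)}^*\,d\,u_{(1)}^*$, and a one-sided term is the right bookkeeping, but the crux --- that the leftover monomials $\Iab(u_{(2)})\bigl(ab\,u_{(1)}^*b+ba\,u_{(1)}^*a+(aba+bab)u_{(1)}^*\bigr)$ together with the re-indexed double coproduct terms assemble into $\sum_u\Iab(u_{(2)})\cdot d\cdot\operatorname{Pyr}(u_{(1)}^*)$ --- is exactly the unproven ``auxiliary identity'' you defer to. That identity requires a specific closed formula for $\operatorname{Pyr}$ (namely $\operatorname{Pyr}(w)=\frac{1}{2}\bigl(cw+wc+\sum_w w_{(1)}\,d\,w_{(2)}\bigr)$) and a coassociativity argument; until it is written out, the second half of the corollary remains a plan rather than a proof.
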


Proposition~\ref{prop:Eulerian} has the following easy consequence.
\begin{corollary}
The interval transform of the second kind $\grII(P)$ of any graded
Eulerian poset $P$ of rank $n+1$ is a multiset of Eulerian posets of
rank $n+1$. Hence $\Psi_{\grII(P)}(a,b)$ is a polynomial of $c$ and $d$  
\end{corollary}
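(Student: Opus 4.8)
The plan is to identify the posets constituting the multiset $\grII(P)$ explicitly and to verify that each one is Eulerian of rank $n+1$; the statement about the $cd$-index then follows immediately from the Bayer--Klapper characterization cited in the Preliminaries. First I would invoke Corollary~\ref{cor:iit2} together with Proposition~\ref{prop:i2prod} to write $\grII(P)$ as the multiset $\{[\0,x]^*\times[x,\1]\::\:x\in P\}$. For a fixed $x$ with $\rho(x)=k$, the factor $[\0,x]^*$ has rank $k$ (passing to the dual preserves rank) and the factor $[x,\1]$ has rank $(n+1)-k$, so the direct product has rank $k+(n+1-k)=n+1$, as required.

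The substance of the argument is to show that each constituent is Eulerian, which I would assemble from three closure properties of the Eulerian condition. Since $P$ is Eulerian, every interval $[\0,x]$ and $[x,\1]$ is again Eulerian. Passing to the dual preserves this property, because an interval of $[\0,x]^*$ is the order-reversal of an interval of $[\0,x]$, and order-reversal sends relative rank $i$ to $\ell-i$ inside an interval of length $\ell$, preserving the balance between the number of even-rank and odd-rank elements. Finally, the direct product of two Eulerian posets is Eulerian: for any nontrivial subinterval $[p_1,p_2]\times[q_1,q_2]$ of the product, the rank-alternating count factors as
$$
\sum_{(p,q)}(-1)^{\rho(p)+\rho(q)}=\Bigl(\sum_{p}(-1)^{\rho(p)}\Bigr)\Bigl(\sum_{q}(-1)^{\rho(q)}\Bigr),
$$
and whichever factor corresponds to a nontrivial factor-interval vanishes by Eulerianness. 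Hence each $[\0,x]^*\times[x,\1]$ is an Eulerian poset of rank $n+1$.

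To conclude, by Definition~\ref{def:grII} we have $\Psi_{\grII(P)}(a,b)=\sum_{x\in P}\Psi_{[\0,x]^*\times[x,\1]}(a,b)$, and each summand is the $ab$-index of an Eulerian poset, hence expressible as a polynomial in $c$ and $d$ by the Bayer--Klapper characterization. A sum of $cd$-polynomials is again a $cd$-polynomial, which gives the claim.

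I do not expect a serious obstacle here: the argument is essentially bookkeeping layered on top of the structural results already established, and is a routine corollary of Proposition~\ref{prop:Eulerian} in spirit. The only point that warrants care is the verification that duals and direct products preserve the Eulerian property; I would prefer to deduce both directly from the even-rank/odd-rank definition (as above) rather than merely cite them, partly to keep the treatment self-contained and partly because the product case falls out so cleanly from the multiplicativity of $(-1)^{\rho}$.
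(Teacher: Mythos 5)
Your proof is correct, but it takes a genuinely different route from the paper's. The paper derives this corollary directly from Proposition~\ref{prop:Eulerian}: by Corollary~\ref{cor:iit2} the constituents of $\grII(P)$ are the intervals $[[x,x],[\0,\1]]$ of $\grI(P)$, the poset $\grI(P)$ is Eulerian by Proposition~\ref{prop:Eulerian}, and every interval of an Eulerian poset is Eulerian by definition; the rank count is immediate since $[x,x]$ has rank $1$ and $[\0,\1]$ has rank $n+2$ in $\grI(P)$. You instead bypass Proposition~\ref{prop:Eulerian} entirely, invoking Proposition~\ref{prop:i2prod} to write each constituent as $[\0,x]^*\times[x,\1]$ and then verifying from the even-rank/odd-rank definition that Eulerianness is closed under duals and direct products (your factorization of the alternating sum is the standard and correct argument). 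Each approach has its merits: the paper's is a one-line deduction from an already-established result, while yours is self-contained, uses only elementary closure properties of the Eulerian condition, and aligns with the product decomposition that the paper itself exploits immediately afterwards in Theorem~\ref{thm:Jojicab2} via the mixing operator. Both arguments are sound, and your final step (a sum of $cd$-polynomials is a $cd$-polynomial, via Bayer--Klapper) matches the paper's intent exactly.
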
  
We will use the notation $\Psi_{\grII(P)}(c,d)$ to stand for the
polynomial $\Psi_{\grII(P)}(a,b)$ rewritten as an expression of $c$ and
$d$. In analogy to Theorem~\ref{thm:Jojicab} and Corollary~\ref{cor:Jojiccd},
the restriction of the map $\IIab$ to $cd$-polynomials allows us to
compute $\Psi_{\grII(P)}(c,d)$. In doing so, the following formulas of
Ehrenborg and Fox~\cite[Theorem 5.1]{Ehrenborg-Fox} are helpful. For two
$cd$ monomials $u$ and $w$ we have
\begin{align}
\label{eq:vc}  
  M(u,v\cdot c)&=v\cdot d\cdot u+M(u,v)\cdot c+\sum_{u} M(u_{(1)},v)\cdot d
  \cdot u_{(2)}\quad\mbox{and}\\ 
\label{eq:vd}  
  M(u,v\cdot d)
  &=v\cdot d\cdot \operatorname{Pyr}(u)
  +M(u,v)\cdot d
  +\sum_{u} M(u_{(1)},v)\cdot d \cdot \operatorname{Pyr}(u_{(2)}).
\end{align}  
%  M(u,v\cdot d)
%  &=M(\epsilon,v)d\operatorname{Pyr}(u)
%  +M(u,v)\cdot d\cdot \operatorname{Pyr}(\epsilon)
%  +\sum_{u} M(u_{(1)},v)\cdot d \cdot \operatorname{Pyr}(u_{(2)}).
\begin{remark}
{\em To avoid introduction a second coproduct denoted $\Delta^*$ and the
counit $\epsilon$ used to state~\cite[Theorem 5.1]{Ehrenborg-Fox}, we
rewrote formulas (\ref{eq:vc})  and (\ref{eq:vd}) using
$M(\epsilon,v)=v$, $d\epsilon=c$ and $\operatorname{Pyr}(\epsilon)=1$.
}
\end{remark}  
Equations (\ref{eq:vc})  and (\ref{eq:vd}), together with the obvious
\begin{equation}
\label{eq:Mswap}  
M(u,v)=M(v,u),
\end{equation}
the initial condition
\begin{equation}
M(1,1)=c
\end{equation}
and the obvious
\begin{equation}
\operatorname{Pyr}(u)=M(1,u)
\end{equation}
allow to compute the function $M(u,v)$ in a recursive fashion.

\section{Special cases}
\label{sec:special}

In this section we compute the $cd$-indices of the poset of intervals
and of the interval transform of the second kind of two special posets:  
the ``ladder'' poset $L_n$ and the Boolean algebra $P([1,n])$ of rank $n$. 

The poset $L_n$ has exactly $2$ elements: $-i$ and $i$ for
each rank $i$ satisfying $0<i<n+1$, and any pair of elements at
different ranks are comparable. The poset $L_2$ of rank $3$ is
shown in Figure~\ref{fig:ladder}. To simplify our notation in the proof
of Theorem~\ref{thm:laddercd} below, we write the unique minimum element
of $L_n$ as $0$ and the unique maximum element as $n+1$. It is well,
known that $\Psi_{L_n}(c,d)=c^n$.

\begin{figure}[h]
%export:100%  
  \input{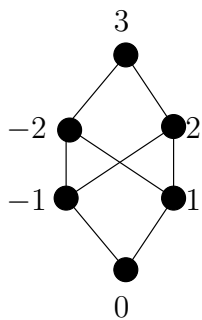}
\caption{The ``ladder'' poset $L_2$ of rank $3$}  
\label{fig:ladder}
\end{figure}
The following result is due to Joji\'c~\cite[Theorem 9]{Jojic}
\begin{theorem}[Joji\'{c}]
\label{thm:laddercd}  
Assume that the finite vector  $(k_0,\ldots,k_{r})$ of nonnegative
integers satisfies $2r+k_0+k_2+\cdots+ k_{r}=n$. Then  
the coefficient of $c^{k_0}dc^{k_1} d\cdots c^{k_r}dc^{k_{r}}$ in
$\Iab(c^n)$ is $2^r(k_1+1)(k_2+1)\cdots (k_r+1)$.
\end{theorem}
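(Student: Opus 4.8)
The plan is to compute $\Iab(c^{n})$ explicitly from the recursion for $\Iab$ and then read off the coefficient of a prescribed monomial by induction on $n$. Since $\Psi_{L_n}(c,d)=c^{n}$ is a power of $c$, only the formula for $\Iab(u\cdot c)$ is ever needed. Adding the two formulas of Theorem~\ref{thm:Jojicab} over the letters $a$ and $b$ (so that $ab+ba=d$ appears in the middle) produces the $cd$-form
\[
\Iab(u\cdot c)=\Iab(u)\cdot c+2d\cdot u^{*}+\sum_{u}\Iab(u_{(2)})\cdot d\cdot u_{(1)}^{*},
\]
which is the first formula of Corollary~\ref{cor:Jojiccd}. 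Specializing to $u=c^{n-1}$, and using $(c^{m})^{*}=c^{m}$ together with the deletion coproduct $\Delta(c^{m})=2\sum_{j=0}^{m-1}c^{j}\otimes c^{m-1-j}$, I obtain the closed recursion
\[
g_{n}=g_{n-1}\,c+2\,d\,c^{\,n-1}+2\sum_{i=0}^{n-2}g_{i}\,d\,c^{\,n-2-i},\qquad g_{0}=c,
\]
where $g_{n}:=\Iab(c^{n})$. The whole theorem is now a statement about the coefficients of $g_{n}$.

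I would then fix the target monomial $w=c^{k_0}dc^{k_1}\cdots dc^{k_r}$ and determine which of the three summands can produce it. The term $g_{n-1}c$ produces $w$ only when $k_r\ge 1$, contributing the coefficient of $c^{k_0}dc^{k_1}\cdots dc^{k_r-1}$ in $g_{n-1}$, which by induction equals $2^{r}(k_1+1)\cdots(k_{r-1}+1)\,k_r$. The sum $2\sum_{i}g_{i}\,d\,c^{\,n-2-i}$ produces $w$ in exactly one way: the appended $d$ must be the last $d$ of $w$, forcing $n-2-i=k_r$ and pairing $w$ with the monomial $c^{k_0}dc^{k_1}\cdots dc^{k_{r-1}}$ of $g_{n-2-k_r}$, so by induction it contributes $2\cdot 2^{\,r-1}(k_1+1)\cdots(k_{r-1}+1)=2^{r}(k_1+1)\cdots(k_{r-1}+1)$. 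Adding the two gives
\[
2^{r}(k_1+1)\cdots(k_{r-1}+1)\bigl(k_r+1\bigr),
\]
which is the asserted value: the factor $(k_r+1)$ splits as $k_r$ (appending a trailing $c$) plus $1$ (inserting the last $d$-block), while $2^{r}=2\cdot 2^{r-1}$. The degenerate cases $r=0$ (only $g_{n-1}c$ contributes, giving $1$) and $k_r=0$ (only the sum contributes, giving $2^{r}(k_1+1)\cdots(k_{r-1}+1)$, consistent since then $k_r+1=1$) are checked identically, and the base values $g_{0}=c$ and $g_{1}=c^{2}+2d$ agree with the formula.

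The one genuinely delicate point — and the step I expect to be the main obstacle — is the bookkeeping around the isolated boundary term $2\,d\,c^{\,n-1}$. This term feeds only the single monomial $dc^{\,n-1}$, that is the case $r=1$, $k_0=0$, $k_1=n-1$; and precisely there the ``insert the last $d$-block'' mechanism of the sum is unavailable, because $i=n-2-k_r=-1$ falls outside the summation range. I must check that $2\,d\,c^{\,n-1}$ supplies exactly the missing amount $2=2^{1}\cdot(\text{empty product of }(k_i+1))$, so that the total again reads $2^{1}(k_1+1)$. Establishing this dovetailing, together with carefully delimiting the range of $i$ so that no monomial is over- or under-counted (including the degenerate empty blocks $k_i=0$, which create substrings $dd$ but cause no ambiguity because the split of $w$ at its last $d$ is unique), is where the argument needs the most care; the remainder is the routine induction driven by the recursion above.
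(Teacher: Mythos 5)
Your argument is correct, but there is nothing in the paper to compare it against: Theorem~\ref{thm:laddercd} is quoted from Joji\'{c} without proof, so you have supplied a self-contained derivation from Theorem~\ref{thm:Jojicab}, which the paper does prove. Your induction checks out, including the uniqueness of the factorization $w=w''\,d\,c^{\,n-2-i}$ at the last $d$ of $w$ and the dovetailing of the boundary term $2dc^{\,n-1}$ with the excluded index $i=n-2-k_r=-1$ (which occurs exactly when $r=1$, $k_0=0$, $k_1=n-1$, where the term contributes the required $2=2^1$). Two remarks. First, the recursion you display is \emph{not} literally the first formula of Corollary~\ref{cor:Jojiccd} as printed: the corollary has $\sum_u \Iab(u_{(2)})\cdot c\cdot u_{(1)}^*$, whereas your sum has $d$ in the middle. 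Your version is the one that actually follows from adding the two $ab$-formulas of Theorem~\ref{thm:Jojicab}, and it is the correct one --- the printed version fails a degree check already for $\Iab(c^2)$ (it would produce a term $2c^2$ inside a homogeneous polynomial of degree $3$), while yours gives $\Iab(c^2)=c^3+4dc+2cd$ in agreement with the theorem. You should state explicitly that you are rederiving the $c$-formula rather than citing the corollary. Second, the degree bookkeeping in the statement is off by one (a monomial $c^{k_0}dc^{k_1}\cdots dc^{k_r}$ appearing in $\Iab(c^n)$ must satisfy $2r+k_0+\cdots+k_r=n+1$, since $\grI(P)$ has rank one more than $P$); your induction implicitly uses the correct normalization, but it is worth flagging so the base cases $g_0=c$ and $g_1=c^2+2d$ visibly match the claimed coefficients.
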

\begin{remark}
  {\em As noted by Joji\'c, this formula is the dual of the one obtained for the
    other Tchebyshev transform see~\cite[Theorem 7.1]{Hetyei-tch}
    and~\cite[Corollary 6.6]{Ehrenborg-Readdy-Tch} (see also~\cite[Table
      1]{Hetyei-tch}, although
    the two poset operations are very 
    different. This observation also suggests that, when we comparing it 
    to the Tchebyshev transform, one would want to
    consider the dual of the poset of intervals, ordered by reverse
    inclusion.}    
\end{remark}  

Next we compute $\II(c^n)$. To do so, the following consequence of
Equation~(\ref{eq:vc}) will be useful:
\begin{align}
\label{eq:cij+1}
  M(c^i,c^{j+1})&=c^j\cdot d\cdot c^i+M(c^i,c^j)\cdot c
  +2\sum_{k=1}^i M(c^{k-1},c^{j})\cdot d\cdot c^{i-k}.
\end{align}  

\begin{lemma}
\label{lem:Mrec}  
The expressions $M(c^i,c^j)$ satisfy the recurrence
$$
M(c^{i+1},c^{j+1})=(M(c^{i},c^{j+1})+M(c^{i+1},c^{j}))c+M(c^i,c^j)\cdot
(2d-c^2)
$$
for $i,j\geq 0$
\end{lemma}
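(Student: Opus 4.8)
The plan is to obtain the claimed two-variable recurrence directly from the one-step recurrence \eqref{eq:cij+1}, which already expresses $M(c^i,c^{j+1})$ in terms of $M(c^i,c^j)$ together with mixing values having a smaller first argument. The idea is to invoke \eqref{eq:cij+1} twice—once with first argument $c^{i+1}$ and once with first argument $c^i$—and then eliminate the common ``summation tail'' shared by the two expressions.

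Concretely, I would first apply \eqref{eq:cij+1} with $i$ replaced by $i+1$ to write
$$
M(c^{i+1},c^{j+1})=c^j d c^{i+1}+M(c^{i+1},c^j)\,c+S_{i+1},
$$
where I abbreviate $S_i:=2\sum_{k=1}^i M(c^{k-1},c^j)\,d\,c^{i-k}$, and I would also record the original instance $M(c^i,c^{j+1})=c^j d c^i+M(c^i,c^j)\,c+S_i$ for later use. The key observation is that the tail $S_i$ is itself recursive: peeling off the top term $k=i+1$ gives
$$
S_{i+1}=S_i\,c+2\,M(c^i,c^j)\,d.
$$
Substituting this into the display for $M(c^{i+1},c^{j+1})$ and factoring a trailing $c$ out of $c^j d c^{i+1}+S_i c=(c^j d c^i+S_i)c$, I would then use the recorded original form to replace $c^j d c^i+S_i$ by $M(c^i,c^{j+1})-M(c^i,c^j)\,c$. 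Collecting the two terms that carry a trailing $c$ and the two terms proportional to $M(c^i,c^j)$ yields exactly the asserted right-hand side $(M(c^i,c^{j+1})+M(c^{i+1},c^j))c+M(c^i,c^j)(2d-c^2)$.

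I expect no genuine obstacle: the argument is a finite manipulation of noncommutative $cd$-polynomials, and the only thing one must notice is the self-similar structure of $S_i$ that lets one telescope the sum rather than expand it termwise. The one place to be careful is the boundary case $i=0$, where the sum is empty so that $S_0=0$; I would verify separately that the derivation still closes up, using the $i=0$ specialization $M(1,c^{j+1})=c^j d+M(1,c^j)\,c$ of \eqref{eq:cij+1}, so that the full stated range $i,j\ge 0$ is indeed covered.
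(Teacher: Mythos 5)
Your proof is correct and is essentially the paper's own argument: both compare the $(i+1,j+1)$ instance of (\ref{eq:cij+1}) with the right multiple by $c$ of the $(i,j+1)$ instance, the key point in each case being that the sum appearing for $i+1$ equals the sum for $i$ multiplied by $c$ on the right plus the extra top term $2M(c^i,c^j)\cdot d$. Your telescoping identity $S_{i+1}=S_i\cdot c+2M(c^i,c^j)\cdot d$ is precisely the step the paper carries out by splitting off the $k=i+1$ summand before subtracting, and your remark about the empty sum at $i=0$ is a harmless (indeed automatic) boundary check.
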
  
\begin{proof}
Replacing $i$ with $i+1$ in (\ref{eq:cij+1}) yields
\begin{align}
\label{eq:ci+1j+1}  
  M(c^{i+1},c^{j+1})&=c^j\cdot d\cdot c^{i+1}+M(c^{i+1},c^j)\cdot c
  +2\sum_{k=1}^i M(c^{k-1},c^{j})\cdot d\cdot
  c^{i+1-k}\\
  &+2M(c^{i},c^{j})\cdot d. \nonumber
\end{align}
By multiplying both sides of (\ref{eq:cij+1}) by $c$ on the right we
obtain 
\begin{align}
\label{eq:cij+1c}
  M(c^i,c^{j+1})c&=c^j\cdot d\cdot c^{i+1}+M(c^i,c^j)\cdot c^2
  +2\sum_{k=1}^i M(c^{k-1},c^{j})\cdot d\cdot c^{i+1-k}.
\end{align}  
The statement now follows after subtracting (\ref{eq:cij+1c}) from
(\ref{eq:ci+1j+1}).
\end{proof}  
Using Lemma~\ref{lem:Mrec} it is easy to show the following
statement. Recall that a {\em Delannoy path} is a lattice path
consisting of East steps $(1,0)$, North steps $(0,1)$ and Northeast
steps $(1,1)$.
\begin{theorem}
\label{thm:mcc}  
$M(c^i,c^j)$ is the half of the total weight of all Delannoy paths from
  $(-1,0)$ or $(0,-1)$ to $(i,j)$ where each East step and North step
  has weight $c$ and each Northeast step has weight $(2d-c^2)$. The
  weight of each 
  Delannoy path is obtained by multiplying the weight of its steps, left
  to right, in the order from $(-1,0)$ or $(0,-1)$ to $(i,j)$.   
\end{theorem}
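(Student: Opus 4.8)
The plan is to let $W(i,j)$ denote one half of the total weight of all the Delannoy paths described in the statement that terminate at $(i,j)$, and to prove $W(i,j)=M(c^i,c^j)$ by induction on $i+j$. Since the interior values of $M(c^i,c^j)$ are completely determined by the recurrence of Lemma~\ref{lem:Mrec}, the boundary values $M(c^i,1)$ and $M(1,c^j)$ are determined by Equation~\eqref{eq:cij+1} together with the symmetry $M(u,v)=M(v,u)$ of~\eqref{eq:Mswap}, and everything is anchored by $M(1,1)=c$, it will suffice to show that the quantities $W(i,j)$ obey exactly the same interior recurrence, the same boundary recurrences, and the same initial value.

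First I would establish the interior recurrence. Fix $(i,j)$ with $i,j\geq 1$ and classify each admissible path by its last step. Because every step strictly increases $x+y$, no path revisits a lattice point, and the predecessor of the last step is one of $(i-1,j)$, $(i,j-1)$, $(i-1,j-1)$, all of which have nonnegative coordinates. As the weight of a path is the product of its step weights read from the source to $(i,j)$, the weight of the last step is appended \emph{on the right}. Summing over the East, North and Northeast possibilities gives that the total weight at $(i,j)$ equals (total at $(i-1,j)$)$\,c+$(total at $(i,j-1)$)$\,c+$(total at $(i-1,j-1)$)$\,(2d-c^2)$; dividing by two shows that $W$ satisfies verbatim the recurrence of Lemma~\ref{lem:Mrec}.

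Next I would treat the boundary $j=0$, the case $i=0$ being symmetric under interchanging the two coordinates (this swaps the sources $(-1,0)$ and $(0,-1)$ and the East and North steps, all of which have weight $c$, and fixes the Northeast step, so it leaves every weight word unchanged). For $i\geq 1$, a path ending at $(i,0)$ ends with an East step from $(i-1,0)$, a North step from $(i,-1)$, or a Northeast step from $(i-1,-1)$. The two auxiliary points with $y=-1$ are reachable only from the source $(0,-1)$ and only by East steps, so their total weights are the single words $c^{i}$ and $c^{i-1}$. Collecting the contributions yields that the total weight at $(i,0)$ equals (total at $(i-1,0)$)$\,c+c^{i-1}(2d-c^2)+c^{i+1}=$(total at $(i-1,0)$)$\,c+2c^{i-1}d$, so $W(i,0)=W(i-1,0)c+c^{i-1}d$. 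This is precisely the recurrence for $M(1,c^i)=M(c^i,1)$ obtained from Equation~\eqref{eq:cij+1} with first argument $1$. The base case is the single verification that $W(0,0)=\tfrac12(c+c)=c=M(1,1)$, the two summands coming from the one-step paths out of $(-1,0)$ and $(0,-1)$. With the interior recurrence, the boundary recurrences, and the base case all matching those of $M(c^i,c^j)$, the induction on $i+j$ closes.

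The step classification is routine; the one place requiring care, and the main obstacle, is the boundary analysis, where the sources $(-1,0)$ and $(0,-1)$ and the auxiliary points with a coordinate equal to $-1$ must be handled so that the cancellation $c^{i-1}(2d-c^2)+c^{i+1}=2c^{i-1}d$ reproduces the algebraic boundary recurrence for $M(1,c^i)$ exactly. Keeping the noncommutative products aligned throughout, with the last step always contributing on the right, is what forces the path recurrences to coincide with those of Lemma~\ref{lem:Mrec} and Equation~\eqref{eq:cij+1} as words rather than merely up to reordering.
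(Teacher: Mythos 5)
Your proposal is correct and follows essentially the same route as the paper: classify Delannoy paths by their last step, verify that the resulting interior recurrence is exactly that of Lemma~\ref{lem:Mrec}, and check the boundary cases by extending the weight function to the auxiliary points with a coordinate equal to $-1$ (the paper packages this as an explicit function $\widetilde{M}(i,j)$, you compute the weights $c^i$ and $c^{i-1}$ at those points inline, which is only a cosmetic difference). Your boundary cancellation $c^{i-1}(2d-c^2)+c^{i+1}=2c^{i-1}d$ and the identification with $M(c^i,1)=M(c^{i-1},1)\cdot c+c^{i-1}d$ match the paper's computation exactly.
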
  
\begin{proof}
  Let us define the function $\widetilde{M}(i,j)$ as follows:
  $$
  \widetilde{M}(i,j)=
  \begin{cases}
    \frac{1}{2} c^j & \mbox{if $i=-1$ and $j\geq 0$}\\
    \frac{1}{2} c^i & \mbox{if $j=-1$ and $i\geq 0$}\\
M(c^i,c^j) & \mbox{if $i,j\geq 0$}\\
  \end{cases}  
  $$
Note that the function $\widetilde{M}(i,j)$ is defined for all pairs of
integers $(i,j)$ satisfying $i,j\geq -1$, except for $i=j=-1$. It
suffices to show that the value of $\widetilde{M}(i,j)$ may be computed
as half of the total weight of the Delannoy paths stated above.

This statement is certainly true for $\widetilde{M}(i,-1)$ for $i\geq
0$: there is no Delannoy path from $(-1,0)$ to $(i,-1)$ and the only
Delannoy path from $(0,-1)$ to $(i,-1)$ is the lattice path consisting
of $i$ East steps. Similarly, there is only one Delannoy path form
$(-1,0)$ to $(-1,j)$, consisting of $j$ North steps and the statement
holds for $\widetilde{M}(-1,j)$.

Observe next that there are exactly two Delannoy paths from $(-1,0)$ or
$(0,-1)$ to $(0,0)$: the first consists of a single East step the second
consists of a single North step, their total weight is $M(1,1)=c$, as expected.

Next we show the validity of the statement for
$\widetilde{M}(i,0)$ when $i>0$. Note that the last step of any Delannoy path
ending at $(i,0)$ is either an East step from $(i-1,0)$  or a North step
from $(i,-1)$ or a Northeast step from $(i-1,-1)$. We want to show that
$$
\widetilde{M}(i,0)
=\widetilde{M}(i-1,0)\cdot c+\widetilde{M}(i,-1)\cdot c
+\widetilde{M}(i-1,-1)\cdot (2d-c^2),\quad\mbox{that
  is,}  
$$
$$
M(c^i,1)=M(c^{i-1},1)\cdot c+\frac{1}{2}\cdot c^{i+1}
+c^{i-1}\cdot d-\frac{1}{2}\cdot c^{i+1},
$$
which is equivalent to
$$
M(c^i,1)=M(c^{i-1},1)\cdot c+c^{i-1}\cdot d.
$$
This last equation is a direct consequence of (\ref{eq:Mswap}) and
(\ref{eq:vc}).  The proof of the statement for $\widetilde{M}(0,j)$ when
$j>0$ is completely analogous.

It remains to show the statement when both $i$ and $j$ are positive. For
these the last step of every Delannoy path ending at $(i,j)$ is either
an East step from $(i-1,j)$ or a North step from $(i,j-1)$ or a
Northeast step from $(i-1,j-1)$. The statement is a direct consequence
of Lemma~\ref{lem:Mrec}.   
\end{proof}
Recall that Stanley~\cite{Stanley-flag} introduced $e=a-b$ and noted
that the existence of the $cd$ index of an Eulerian poset is equivalent to
stating that the $ab$-index is a polynomial of $c$ and $e^2=c^2-2d$. In
terms of the resulting {\em $ce$-index}, Theorem~\ref{thm:mcc}  may be
restated as follows.
\begin{theorem}
\label{thm:mcce}  
The coefficient of $c^{k_0}e^2c^{k_1}e^2\cdots e^2c^{k_r}$ in
$M(c^i,c^j)$ is
$$
\frac{(-1)^r}{2}\cdot \binom{i+j+2-2r}{i+1-r}
$$
if $k_0+k_1+\cdots+k_r+2r=i+j+1$ and $0$ otherwise.  
\end{theorem}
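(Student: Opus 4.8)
The plan is to translate the weighted Delannoy path model of Theorem~\ref{thm:mcc} directly into the $ce$-index. Since $e^2=c^2-2d$, the weight $2d-c^2$ of a Northeast step becomes simply $-e^2$, while each East step and each North step still carries weight $c$. Reading the steps of a Delannoy path from left to right therefore produces a word in the two letters $c$ and $e^2$, together with a sign $(-1)^r$, where $r$ is the number of Northeast steps along the path. First I would record that a path from $(-1,0)$ or $(0,-1)$ to $(i,j)$ with $r$ Northeast steps and $m$ single (East or North) steps contributes a monomial of degree $m+2r$, and that this degree always equals $i+j+1$; this yields the vanishing of the coefficient whenever $k_0+\cdots+k_r+2r\neq i+j+1$, and otherwise fixes $r$ as the number of Northeast steps and $m=i+j+1-2r$ as the number of single steps in the monomial $c^{k_0}e^2c^{k_1}e^2\cdots e^2c^{k_r}$.

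The key observation is that the \emph{pattern} of step types (which positions are single steps and which are Northeast steps) is completely determined by the target word, because every East and every North step produces the same letter $c$. Thus the only freedom left in realizing the word is the choice, for each of the $m$ single-step slots, of whether it is an East step or a North step; the Northeast steps are pinned. Next I would count these choices separately for the two starting points. A path from $(-1,0)$ to $(i,j)$ must use exactly $i+1-r$ East steps and $j-r$ North steps among its $m$ single steps, so there are $\binom{m}{i+1-r}$ admissible assignments; a path from $(0,-1)$ must use $i-r$ East steps, giving $\binom{m}{i-r}$ assignments. Since every such sequence of steps is a legitimate Delannoy path (there is no further monotonicity constraint), no path is overcounted or excluded.

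Finally I would assemble the coefficient. Each contributing path carries the sign $(-1)^r$ and the identical word $c^{k_0}e^2\cdots e^2c^{k_r}$, so summing over both starting points gives the scalar
$$
(-1)^r\left(\binom{m}{i+1-r}+\binom{m}{i-r}\right)=(-1)^r\binom{m+1}{i+1-r}=(-1)^r\binom{i+j+2-2r}{i+1-r}
$$
by Pascal's rule, with $m=i+j+1-2r$. Dividing by $2$, as prescribed by Theorem~\ref{thm:mcc}, produces exactly $\frac{(-1)^r}{2}\binom{i+j+2-2r}{i+1-r}$, as claimed. The one point requiring genuine care is the bookkeeping for the two admissible initial points $(-1,0)$ and $(0,-1)$: checking that the East and North counts are forced in each case, that the two resulting binomials are the adjacent ones merged by Pascal's identity, and that degenerate ranges (for instance $j<r$) are correctly handled by the convention that out-of-range binomial coefficients vanish. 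Everything else is a routine unwinding of the path model.
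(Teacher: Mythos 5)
Your proposal is correct and follows essentially the same route as the paper's own proof: both unwind the Delannoy path model of Theorem~\ref{thm:mcc}, observe that a Northeast step carries weight $-e^2$ while single steps carry $c$, count the paths from $(-1,0)$ and from $(0,-1)$ separately as $\binom{i+j+1-2r}{i+1-r}$ and $\binom{i+j+1-2r}{i-r}$, and merge the two counts by Pascal's identity. Your explicit remark that the positions of the Northeast steps are pinned by the target word is a nice clarification of a point the paper leaves implicit, but the argument is the same.
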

\begin{proof}
By Theorem~\ref{thm:mcc}, a lattice path from $(-1,0)$ or $(0,-1)$ to
$(i,j)$ contributes a term $(-1)^r/2\cdot c^{k_0}e^2c^{k_1}e^2\cdots
e^2c^{k_r}$ exactly when there are $r$ Northeast steps, there are $k_0$
North or East steps before the first Northeast step, $k_r$ North or East
steps after the last Northeast step and there are exactly $k_i$
Northeast steps between the $i$th and $(i+1)$st Northeast step for
$i=1,\ldots,r-1$. Hence each term contributed must satisfy
$k_0+k_1+\cdots+k_r+2r=i+j+1$. 

Let us count first the number of lattice paths from $(-1,0)$ to $(i,j$)
contributing a term $(-1)^r/2\cdot 
c^{k_0}e^2c^{k_1}e^2\cdots e^2c^{k_r}$. The parameters $i$ and $j$ must
satisfy $i+1\geq r$ and $j\geq r$, as each Northeast step increases both
coordinates by $1$. Out of the $i+j+1-2r$ North or East steps we must
select $i+1-r$ East steps and $j-r$ North steps. This may be performed
$\binom{i+j+1-2r}{i+1-r}$ ways.

Similarly, the number of lattice paths from $(0,-1)$ to $(i,j$)
contributing a term $(-1)^r/2\cdot 
c^{k_0}e^2c^{k_1}e^2\cdots e^2c^{k_r}$ is
$\binom{i+j+1-2r}{j+1-r}=\binom{i+j+1-2r}{i-r}$. The stated result
follows by Pascal's identity.
\end{proof}  

\begin{remark}
{\em It is worth pointing out that the coefficient of
  $c^{k_0}e^2c^{k_1}e^2\cdots e^2c^{k_r}$ in $M(c^i,c^j)$ depends only
  on $i,j$ and $r$. For a fixed expression  $M(c^i,c^j)$ the
  coefficient of a $ce$-word depends only on the number of factors $e^2$
  in it.} 
\end{remark}

\begin{remark}
{\em For the somewhat similar diamond product, N.B.\ Fox gave a more general
lattice path interpretation~\cite[Theorem 5.4]{Fox}. It would be
interesting to see whether a similar approach could also help express
$M(u,v)$ in general as a total weight of lattice paths.} 
\end{remark}   

Using Theorem~\ref{thm:mcce} we may express $\II(c^n)$ as follows.
\begin{proposition}
\label{prop:uce}
Assume that the finite vector  $(k_0,\ldots,k_{r})$ of nonnegative
integers satisfies $2r+k_0+k_2+\cdots+ k_{r}=n-1$. Then  
the coefficient of $c^{k_0}e^2c^{k_1} e^2\cdots c^{k_{r-1}}e^2c^{k_{r}}$ in
$\II(c^n)$, written as a $ce$-polynomial, is $(-1)^r\cdot 2^{n+1-2r}$.
\end{proposition}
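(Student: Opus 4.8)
The plan is to expand $\II(c^n)$ via Theorem~\ref{thm:Jojicab2} and then read off coefficients from the explicit $ce$-expansion of the mixing operator in Theorem~\ref{thm:mcce}. I would first specialize the formula $\IIab(u)=u+u^*+\sum_u M(u_{(1)}^*,u_{(2)})$ to $u=c^n$. Since $c^*=c$, the word $c^n$ is symmetric and $u+u^*=2c^n$. Expanding $c^n=(a+b)^n$ and deleting a single letter, the coproduct is $\Delta(c^n)=2\sum_{i=0}^{n-1}c^i\otimes c^{n-1-i}$ (the factor $2$ arising because the deleted position may carry either $a$ or $b$); as each $c^i$ is symmetric, Theorem~\ref{thm:Jojicab2} gives
$$
\II(c^n)=2c^n+2\sum_{i=0}^{n-1}M(c^i,c^{n-1-i}).
$$
Each summand is homogeneous of degree $n$, so the relevant $ce$-word $c^{k_0}e^2c^{k_1}e^2\cdots e^2c^{k_r}$ must satisfy $k_0+k_1+\cdots+k_r+2r=n$.

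Next I would extract the coefficient of $w=c^{k_0}e^2c^{k_1}e^2\cdots e^2c^{k_r}$ from each $M(c^i,c^{n-1-i})$ using Theorem~\ref{thm:mcce}. Here $i+j=n-1$, so $i+j+2-2r=n+1-2r$ and the coefficient of $w$ in $M(c^i,c^{n-1-i})$ is $\tfrac{(-1)^r}{2}\binom{n+1-2r}{i+1-r}$. Summing over $0\le i\le n-1$ and including the outer factor $2$, the coproduct part contributes to the coefficient of $w$ in $\II(c^n)$ the quantity
$$
(-1)^r\sum_{i=0}^{n-1}\binom{n+1-2r}{i+1-r}=(-1)^r\sum_{m=1-r}^{n-r}\binom{n+1-2r}{m},
$$
upon re-indexing $m=i+1-r$.

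For $r\ge 1$ the range $[1-r,\,n-r]$ already contains the full support $[0,\,n+1-2r]$ of the summand, so the binomial theorem gives $2^{n+1-2r}$ and hence the claimed coefficient $(-1)^r2^{n+1-2r}$; the term $2c^n$ plays no role since $w\ne c^n$. The delicate point, and the step I expect to be the main obstacle, is the boundary case $r=0$, i.e.\ $w=c^n$: the range $m\in[1,n]$ then omits precisely the extreme terms $\binom{n+1}{0}$ and $\binom{n+1}{n+1}$, so the coproduct part contributes $2^{n+1}-2$ rather than $2^{n+1}$, and the missing $+2$ must come from the symmetric term $2c^n$. I would resolve both cases at once by observing that, in the notation $\widetilde M$ of the proof of Theorem~\ref{thm:mcc}, one has $2c^n=2\big(\widetilde M(-1,n)+\widetilde M(n,-1)\big)$, so that
$$
\II(c^n)=2\sum_{i=-1}^{n}\widetilde M(i,\,n-1-i)
$$
is twice the total weight of all weighted Delannoy paths from $(-1,0)$ or $(0,-1)$ to the antidiagonal $x+y=n-1$. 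A path matching the step pattern of $w$ has weight $(-1)^r w$ (each of its $r$ Northeast steps carries weight $2d-c^2=-e^2$), and there are exactly $2\cdot 2^{\,n-2r}=2^{\,n+1-2r}$ such paths, namely two choices of starting point together with an independent East/North choice for each of the $n-2r$ axis steps. This yields the coefficient $(-1)^r2^{n+1-2r}$ uniformly in $r$, absorbing the $r=0$ boundary subtlety into the count.
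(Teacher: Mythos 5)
Your proposal is correct and follows essentially the same route as the paper: the identity $\II(c^n)=2c^n+2\sum_{i=0}^{n-1}M(c^i,c^{n-1-i})$ obtained from Theorem~\ref{thm:Jojicab2} and $\Delta(c^n)=2\sum_i c^i\otimes c^{n-1-i}$, followed by summing the binomial coefficients supplied by Theorem~\ref{thm:mcce}, with the $r=0$ boundary term absorbed by the extra $2c^n$ exactly as in the paper's proof. Your closing reformulation via $\widetilde{M}$ and Delannoy paths to the antidiagonal is a pleasant uniform packaging of that boundary case rather than a different argument, and you are right that the degree condition should read $2r+k_0+k_1+\cdots+k_r=n$ (the statement's $n-1$, and its omission of $k_1$, are typos, as the paper's own computation of the coefficient of $c^n$ confirms).
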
  
\begin{proof}
Observe first that by the definition of the coproduct, the relation
$\Delta(c)=2\cdot 1\otimes 1$  and by
Theorem~\ref{thm:Jojicab2} we have 
$$
\IIab(c^n)=2\cdot c^n+2\sum_{i=0}^{n-1} M(c^{i},c^{n-1-i}).
$$
For positive $r$, by Theorem~\ref{thm:mcce} we get that the
coefficient of $c^{k_0}e^2c^{k_1} e^2\cdots c^{k_{r-1}}e^2c^{k_{r}}$ in
$\II(c^n)$ is
$$
\sum_{i=r-1}^{n-r} (-1)^r\binom{n+1-2r}{i+1-r},
$$
and the result follows by the binomial theorem. For $r=0$, we must take
into account the term $2c^n$ in front of the sum of terms of the form
$M(c^{i},c^{n-1-i})$ and we must also note the summation limits. We
obtain that the coefficient of $c^n$ in $\II(c^n)$ is
$$
2+\sum_{i=0}^{n-1} \binom{n+1}{i+1}=2+2^{n+1}-2=2^{n+1}.
$$
\end{proof}

\begin{theorem}
Assume that the finite vector  $(k_0,\ldots,k_{r})$ of nonnegative
integers satisfies $2r+k_0+k_2+\cdots+ k_{r}=n-1$. Then  
the coefficient of $c^{k_0}dc^{k_1} d\cdots c^{k_{r-1}}dc^{k_{r}}$ in
$\II(c^n)$, written as a $cd$-polynomial, is $2^{r+1}(k_0+1)(k_1+1)\cdots
(k_r+1)$. 
\end{theorem}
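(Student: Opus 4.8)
The plan is to read off the $cd$-coefficients of $\II(c^n)$ from the $ce$-coefficients already determined in Proposition~\ref{prop:uce}, by carrying out the change of basis $e^2=c^2-2d$. What makes this feasible is that, by Proposition~\ref{prop:uce}, the coefficient of a $ce$-word $c^{l_0}e^2c^{l_1}e^2\cdots e^2c^{l_s}$ in $\II(c^n)$ depends only on the number $s$ of factors $e^2$ it contains, being equal to $(-1)^s2^{n+1-2s}$ for every such word of the common degree $n$ (so $2s+\sum_i l_i=n$). Substituting $e^2=c^2-2d$ and expanding the product, each of the $s$ factors $e^2$ independently turns into either $c^2$ (with weight $+1$) or $-2d$ (with weight $-2$).

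First I would fix a target word $W=c^{k_0}dc^{k_1}d\cdots dc^{k_r}$ and describe exactly which expansions contribute to it. Its $r$ letters $d$ must come from $r$ of the factors $e^2$ chosen to become $-2d$, while every remaining factor $e^2$ becomes $c^2$ and is absorbed into one of the $r+1$ maximal $c$-blocks of $W$. If the $j$-th block $c^{k_j}$ absorbs $a_j$ such copies of $c^2$, then those copies split the block into $a_j+1$ original $c$-subblocks whose nonnegative exponents sum to $k_j-2a_j$; the number of ways to do this is $\binom{k_j-a_j}{a_j}$. Gathering the weights — the factor $(-1)^s2^{n+1-2s}$ with $s=r+\sum_j a_j$, times $(-2)^r$ from the $d$-choices — the coefficient of $W$ factors over the blocks as $2^{\,n+1-r}\prod_{j=0}^{r}\bigl(\sum_{a\ge0}\binom{k_j-a}{a}(-1/4)^{a}\bigr)$.

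The remaining work is the evaluation of the single-block sum $S(k)=\sum_{a\ge0}\binom{k-a}{a}(-1/4)^a$. This is the Fibonacci-type polynomial $\sum_a\binom{k-a}{a}x^a$ evaluated at $x=-1/4$; it obeys $S_k=S_{k-1}+xS_{k-2}$, and at $x=-1/4$ the characteristic polynomial $t^2-t+1/4$ has the double root $1/2$, so $S_k=(\alpha+\beta k)2^{-k}$, and the initial values $S(0)=S(1)=1$ force $S(k)=(k+1)2^{-k}$. Substituting this back yields $2^{\,n+1-r}\prod_j(k_j+1)2^{-k_j}=2^{\,n+1-r-\sum_jk_j}\prod_j(k_j+1)$, and the degree relation $2r+\sum_jk_j=n$ collapses the exponent of $2$ to $r+1$, producing the claimed value $2^{r+1}(k_0+1)(k_1+1)\cdots(k_r+1)$.

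I expect the combinatorial bookkeeping of the substitution to be the main obstacle: one must match each target $cd$-word to the multiset of pairs (source $ce$-word, assignment of each $e^2$ to $c^2$ or $d$) that produce it, and in particular justify the count $\binom{k_j-a_j}{a_j}$ for the number of ways $a_j$ copies of $c^2$ merge into a block of final length $k_j$. Once the coefficient is expressed as a product over blocks, the degenerate Fibonacci evaluation $S(k)=(k+1)2^{-k}$ finishes the argument cleanly. It is worth noting that the appearance of every factor $(k_j+1)$, including the one for $j=0$, together with the extra power $2^{r+1}$ rather than $2^r$, reflects the built-in $u\mapsto u+u^*$ symmetry of $\II$ and is precisely what distinguishes this formula from its counterpart for $\Iab(c^n)$ in Theorem~\ref{thm:laddercd}.
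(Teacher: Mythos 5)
Your argument is correct and follows essentially the same route as the paper: both read off the $cd$-coefficients from the $ce$-expansion of Proposition~\ref{prop:uce} via $e^2=c^2-2d$, factor the resulting sum over the $r+1$ maximal $c$-blocks, and evaluate the per-block contribution by a degenerate Fibonacci-type recurrence. The only cosmetic difference is that you compute the block sum directly as $S(k)=\sum_a\binom{k-a}{a}(-1/4)^a=(k+1)2^{-k}$, while the paper packages the same quantity as $\gamma_k/2^{k+1}$ with $\gamma_k$ the coefficient of $c^k$ in $\II(c^k)$ — these agree since $\gamma_k=2^{k+1}S(k)$.
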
  
\begin{proof}
We may obtain the $cd$-index by substituting $e^2=c^2-2d$ into the
$ce$-index. Hence the $cd$ word $c^{k_0}dc^{k_1} d\cdots
c^{k_{r-1}}dc^{k_{r}}$ is contributed by all $ce$-words that are obtained
from $c^{k_0}e^2c^{k_1} e^2\cdots c^{k_r}e^2c^{k_{r}}$ by replacing
some factors $c^2$ by $e^2$. By Proposition~\ref{prop:uce}, the coefficient
of $c^{k_0}e^2c^{k_1} e^2\cdots c^{k_r}e^2c^{k_{r}}$ in $\II(c^n)$ is
$(-1)^r\cdot 2^{n+1-2r}$, but when we replace all factors $e^2$ in this
word by $(c^2-2d)$, we have to multiply by $(-2)^r$. Hence, the
contribution of the term $(-1)^r\cdot 2^{n+1-2r}\cdot c^{k_0}e^2c^{k_1}
e^2\cdots c^{k_r}e^2c^{k_{r}}$ to the coefficient of $c^{k_0}dc^{k_1} d\cdots
c^{k_{r-1}}dc^{k_{r}}$ in $\II(c^n)$ is $2^{n+1-r}$.

When we replace any factor $c^2$ in $c^{k_0}e^2c^{k_1} e^2\cdots
c^{k_r}e^2c^{k_{r}}$ by $e^2$,
the coefficient of the resulting $ce$-word gets changed by a factor of
$(-2^{-2})=-1/4$. These additional factors $e^2$ contribute a factor of
$1$ when we replace $e^2$ with $(c^2-2d)$ and consider the coefficient
of $c^{k_0}dc^{k_1} d\cdots c^{k_{r-1}}dc^{k_{r}}$. To add up the
contribution of all these other terms consider first the special case
when we compute the coefficient $\gamma_n$ of $c^n$ in $\II(c^n)$, written as a
$cd$-polynomial. For example, for
$n=4$, by Proposition~\ref{prop:uce} we have
$$
\II(c^4)=2^5\cdot c^4-2^3\cdot (c^2e^2+ce^2c+e^2c^2)+2^1\cdot e^4,
$$
and if we rewrite this as a $cd$-polynomial, using $e^2=c^2-2d$, we
obtain
$$\gamma_4=2^5-2^3\cdot 3+2^1\cdot 1=10.$$
For general $n$ we obtain
$$
\gamma_n=\sum_{r=0}^{\lfloor\frac{n}{2}\rfloor}
(-1)^r\cdot 2^{n+1-2r}\cdot \binom{n-r}{n-2r} 
$$
It is easy to show (using for example the Fibonacci-type recurrence
$\gamma_n=2\gamma_{n-1}-\gamma_{n-2}$) that $\gamma_n=2(n+1)$.
Let us compute next the coefficient of $c^{k_0}dc^{k_1} d\cdots
c^{k_{r-1}}dc^{k_{r}}$ in $\II(c^n)$, written as a $cd$-monomial. As
noted above, the rewriting the term $(-1)^r\cdot 2^{n+1-2r}\cdot
c^{k_0}e^2c^{k_1} e^2\cdots c^{k_r}e^2c^{k_{r}}$ contributes 
$2^{n+1-r}$ to the coefficient of $c^{k_0}dc^{k_1} d\cdots
c^{k_{r-1}}dc^{k_{r}}$. To obtain the contribution of the other
$ce$-terms, we may repeat the above reasoning to each factor $c^{k_i}$
for $i=0,1,\ldots,r$. We obtain the coefficient
$$2^{n+1-r}\frac{\gamma_{k_0}}{2^{k_0+1}}\frac{\gamma_{k_1}}{2^{k_1+1}}\cdots 
    \frac{\gamma_{k_r}}{2^{k_1+1}}=2^{n+1-r}\frac{(k_0+1)(k_1+1)\cdots
    (k_r+1)}{2^{n-2r}} .$$
\end{proof}  

As we have seen in Proposition~\ref{prop:tbc}, the order complex of the
poset of intervals $\grI(P([1,n]))$ of the Boolean algebra $P([1,n])$
contains the type $B$ coxeter complex. Furthermore, the following
statement is well-known.

\begin{lemma}
The poset of intervals $\grI(P([1,n]))$ of the Boolean algebra $P([1,n])$
is isomorphic to the face lattice $C_n$ of the $n$-dimensional cube. 
\end{lemma}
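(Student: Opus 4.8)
The plan is to prove the isomorphism by identifying each interval of the Boolean algebra with a face of the cube through its vertex set. First I would fix the standard realization of the $n$-dimensional cube as $[0,1]^n$ and identify its $2^n$ vertices with the subsets of $[1,n]$ via characteristic vectors, so that a subset $Z\subseteq [1,n]$ corresponds to the $0/1$-vector supported on $Z$. Every nonempty face of the cube is obtained by fixing each coordinate $i$ either to $1$, or to $0$, or leaving it free. Recording the set $X$ of coordinates fixed to $1$ and the set $Y$ of coordinates \emph{not} fixed to $0$, one obtains a pair $X\subseteq Y\subseteq [1,n]$, and the vertex set of this face is exactly $\{Z\::\:X\subseteq Z\subseteq Y\}$, that is, the interval $[X,Y]$ of $P([1,n])$. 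This is the crux: a nonempty face of the cube, regarded as its set of vertices, is literally an interval of the Boolean algebra.

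This observation furnishes the candidate map: send each nonempty interval $[X,Y]\in \grI(P([1,n]))$ to the face whose vertex set is $[X,Y]$, and send the empty interval to the empty face of $C_n$. I would then check that this is a bijection. It is injective because distinct intervals have distinct underlying vertex sets, and it is surjective because the previous paragraph shows that every nonempty face arises from exactly one such pair $X\subseteq Y$; adjoining the empty face and the empty interval completes the correspondence. The top elements also match, since the whole cube (all coordinates free) corresponds to $[\emptyset,[1,n]]=[\0,\1]$, the maximum of $\grI(P([1,n]))$, and the empty face corresponds to its minimum.

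Finally I would verify order preservation in both directions, which upgrades the bijection to a poset isomorphism. The face lattice $C_n$ is ordered by inclusion of faces, equivalently by inclusion of vertex sets, while $\grI(P([1,n]))$ is ordered by inclusion of intervals as subsets of $P([1,n])$. Since $[X_1,Y_1]\subseteq [X_2,Y_2]$ holds precisely when $X_2\subseteq X_1$ and $Y_1\subseteq Y_2$, this is exactly the condition for the corresponding vertex sets to be nested, so the two inclusion orders agree. As a consistency check one matches ranks: the rank of a nonempty interval $[X,Y]$ in $\grI(P([1,n]))$ is $|Y|-|X|+1$, whereas the dimension of the associated face is $|Y\setminus X|=|Y|-|X|$, so rank equals dimension plus one throughout, in agreement with the grading of a face lattice. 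The entire argument is bookkeeping, and the only point requiring care is keeping the directions of the inclusions straight: enlarging a face (making more coordinates free) corresponds to enlarging the interval, and the adjoined empty interval must be matched with the empty face rather than with any singleton.
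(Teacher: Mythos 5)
Your proof is correct and follows essentially the same route as the paper, which likewise identifies cube vertices with subsets via characteristic vectors and sends the interval $[\sigma,\tau]$ to the face obtained by fixing coordinates in $\sigma$ to $1$ and coordinates outside $\tau$ to $0$. The paper only sketches this in two sentences, so your fuller verification of bijectivity, order preservation, and the rank bookkeeping is a faithful elaboration rather than a different argument.
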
  
Indeed, we may identify each vertex $(x_1,\ldots,x_n)$ of the standard
cube $[0,1]^n$ with the subset $\sigma=\{i\in [1,n]\::\: x_i=1\}$ of
$[1,n]$. Each interval $[\sigma,\tau]$ corresponds to the face containing
all vertices $(x_1,\ldots,x_n)$ satisfying $x_i=0$ for $i\in [1,n]-\tau$
and $x_i=1$ for $i\in \sigma$.

The $cd$-index of the cubical lattice has been expressed by
Hetyei~\cite{Hetyei-andre} and by Ehrenborg and
Readdy~\cite{Ehrenborg-Readdy-rcubical} in terms of (different) signed
generalizations of {\em Andr\'e-permutations}. Andr\'e permutations,
first studied by Foata, Strehl and
Sch\"utzenberger~\cite{Foata-Schutzenberger,Foata-Strehl} were used by
Purtill~\cite{Purtill} to express the $cd$-index of the Boolean
algebra.

Purtill's approach may also be used to compute the interval transform of
the second kind $\II(P([1,n]))$ of a Boolean algebra, because of the
following observation: the set of all faces containing a vertex of an
$n$-dimensional hypercube, ordered by inclusion, form a lattice that is
isomorphic to the Boolean algebra $P([1,n])$. (In other words, the link
of a vertex in a cube is a simplex.) Hence we obtain
\begin{align}
\label{eq:eigenvector}  
\Psi_{\II(P([1,n]))}(c,d)=2^n\cdot \Psi_{P([1,n])}(c,d). 
\end{align}  

\begin{remark}
Equation~\ref{eq:eigenvector} exhibits a remarkable analogy to a result
of Ehrenborg and Readdy~\cite[Theorem 10.10]{Ehrenborg-Readdy-Tch}
completely describing all eigenvectors of the Tchebyshev transform of
the second kind, discussed in their paper. 
\end{remark}

\section{Concluding remarks}

It would be desirable to find more explicit formulas describing the
$cd$-index of a graded poset of intervals of an Eulerian poset, but
this seems harder than for the Tchebyshev transform studied 
in~\cite{Hetyei-tch}, \cite{Hetyei-mfp} and \cite{Ehrenborg-Readdy-Tch}. The
source of all difficulties seems that the operator $\iab$ recursively
``rotates'' the words involved: the recurrences call for cutting off certain
initial segment of some words and placing their reverse at the end. That
said, generalizations of permutohedra abound, and performing an
analogous sequence of stellar subdivisions on their duals, respectively
taking the graded poset of intervals for an associated poset may result
in interesting geometric constructions, producing perhaps new type $B$
analogues. A first step in this direction may be found in the work of
Athanasiadis~\cite{Athanasiadis-BE} where the $r$-fold edgewise
subdivision of the barycentric subdivision of a simplex is considered. 
Finally, applying the Tchebyshev transform studied
in~\cite{Hetyei-tch}, \cite{Hetyei-mfp} and \cite{Ehrenborg-Readdy-Tch} to
a Boolean algebra creates a poset whose order complex has the same
face numbers as the dual of a type $B$ permutohedron. It may be
interesting to find out whether the resulting polytope also has a nice
geometric representation.

\section*{Acknowledgments}
This work was partially supported by a grant from the Simons Foundation
(\#514648 to G\'abor Hetyei). Many thanks to the referees of an extended
abstract on this work, submitted to FPSAC 2020, for many helpful
suggestions. I am also indebted to Christos Athanasiadis for pointing
out the connection between Tchebyshev triangulations and second edgewise
subdivisions.

\end{document}